\DeclarePairedDelimiter\abs{\lvert}{\rvert}
\DeclarePairedDelimiter\meas{\lvert}{\rvert}
\DeclarePairedDelimiter\norm{\lVert}{\rVert}
\providecommand\given{}
\newcommand\SetSymbol[1][]{%
\nonscript\:#1\vert
\allowbreak
\nonscript\:
\mathopen{}}
\DeclarePairedDelimiterX\Set[1]\{\}{%
\renewcommand\given{\SetSymbol[\delimsize]}
#1
}
\numberwithin{equation}{section}
\newtheorem{theorem}[equation]{Theorem}
\newtheorem{corollary}[equation]{Corollary}
\newtheorem{lemma}[equation]{Lemma}
\newtheorem{proposition}[equation]{Proposition}
\theoremstyle{definition}
\newtheorem{definition}[equation]{Definition}
\theoremstyle{remark}
\newtheorem{remark}[equation]{Remark}
\newtheorem*{ack}{Acknowledgment}
\newcommand{\calD}{\mathcal{D}}
\newcommand{\calL}{\mathcal{L}}
\newcommand{\calQ}{\mathcal{Q}}
\newcommand{\Rmin}{\underline{R}}
\newcommand{\Rmax}{\overline{R}}
\renewcommand{\top}{\mathrm{top}}
\newcommand{\calE}{\mathcal{E}}
\newcommand{\calF}{\mathcal{F}}
\newcommand{\calS}{\mathcal{S}}
\newcommand{\calJ}{\mathcal{J}}
\newcommand{\calH}{\mathcal{H}}
\newcommand{\calC}{\mathcal{C}}
\newcommand{\calN}{\mathcal{N}}
\newcommand{\ch}{\operatorname{ch}}
\newcommand{\scale}{s}
\newcommand{\gen}{\operatorname{gen}}
\newcommand{\smin}{\underline{\sigma}}
\newcommand{\smax}{\overline{\sigma}}
\newcommand{\sumin}{\scale_{\min}}
\newcommand{\sumax}{\scale_{\max}}
\newcommand{\bd}{\operatorname{bd}}
\newcommand{\ds}{\mathbf{d}} 
\def\C{\mathbb{C}}
\newcommand{\CZK}{K}
\newcommand{\R}{\mathbb{R}}
\newcommand{\N}{\mathbb{N}}
\newcommand{\Z}{\mathbb{Z}}
\newcommand{\dif}{\mathrm{d}}
\DeclareMathOperator{\dist}{dist}
\DeclareMathOperator{\diam}{diam}
\DeclareMathOperator{\supp}{supp}
\DeclareMathOperator{\mdens}{\overline{dens}}
\newcommand{\mE}{\overline{E}}
\newcommand{\one}{\mathbf{1}}
\def\PZdefchar#1{\expandafter\def\csname T#1\endcsname{\mathfrak{#1}}}
\def\PZdefloop#1{\ifx#1\PZdefloop\else\PZdefchar#1\expandafter\PZdefloop\fi}
\begin{document}
\title[Maximal polynomial modulations]{Maximal polynomial modulations\\ of singular integrals}
\author{Pavel Zorin-Kranich}
\address{University of Bonn\\
  Mathematical Institute\\
  Bonn\\
  Germany
}
\begin{abstract}
Let $\CZK$ be a standard H\"older continuous Calder\'on--Zygmund kernel on $\R^{\ds}$ whose truncations define $L^{2}$ bounded operators.
We show that the maximal operator obtained by modulating $\CZK$ by polynomial phases of a fixed degree is bounded on $L^{p}(\R^{\ds})$ for $1 < p < \infty$.
This extends Sj\"olin's multidimensional Carleson theorem and Lie's polynomial Carleson theorem.
\end{abstract}
\maketitle

\section{Introduction}

This article continues a line of research in time-frequency analysis started with Carleson's theorem on pointwise almost everywhere convergence of Fourier series of $L^{2}$ functions on $\R/\Z$ \cite{MR0199631}.
In view of Stein's maximal principle \cite{MR125392}, this result is equivalent to the weak type $(2,2)$ bound for an associated maximal operator, called the \emph{Carleson operator}.
Two essentially different approaches to $L^{p}$ bounds for this operator were introduced in \cite{MR0340926,MR1783613}.

There are several possible analogues of Carleson's theorem in higher dimensions, depending on the chosen generalization of the interval multipliers.
One direction concerns Bochner--Riesz summation, which becomes necessary because the ball multiplier is unbounded on any $L^{p}(\R^{\ds})$ space for $\ds\geq 2$ and $p\neq 2$ \cite{MR0296602}.
Bochner--Riesz summation is embedded in a network of open problems centered around the so-called \emph{local smoothing conjecture}.
We refer to \cite{arxiv:1812.11616} for a recent survey of this topic and remark that this conjecture has been solved in dimension $\ds=2$ in \cite{MR4151084}.

We are concerned with a different direction, where the Hilbert transform appearing in connection with Fourier summation is replaced by a more general singular integral.
A first result of this kind, in which the singular integral is maximally modulated by plane waves, appeared in \cite{MR0336222}.
Polynomial modulations of singular integrals were studied in \cite{MR890662} in connection with analysis on nilpotent Lie groups.
This led to Stein's conjecture that \emph{maximal polynomial} modulations of singular integrals define $L^{p}$ bounded operators, backed by a proof of concept result in \cite{MR1364908} involving \emph{maximal quadratic} modulations.
A more general result, involving maximal modulations by polynomials without linear terms, was obtained by more flexible methods in \cite{MR1879821}.
Decisive progress on Stein's conjecture was made in \cite{MR2545246}, where boundedness of the Hilbert transform maximally modulated by polynomial phases involving both linear and quadratic terms was proved.
That result was later extended to polynomials of an arbitrary fixed degree and the full range of $L^{p}$ spaces in \cite{MR4125450}.
In this article, drawing most heavily on the ideas introduced in \cite{MR0340926,MR1879821,MR4125450}, we prove Stein's conjecture in dimensions $\ds > 1$.

Our result is most conveniently formulated as a uniform estimate for truncated singular integrals.
We begin with the necessary notation.
Let $\CZK$ be a $\tau$-H\"older continuous Calder\'on--Zygmund (CZ) kernel on $\R^{\ds}$, where $\tau>0$ and $\ds\geq 1$, that is, a function $\CZK:\Set{(x,y) \in \R^{\ds}\times\R^{\ds} \given x\neq y} \to \C$ such that
\begin{equation}
\label{eq:K-size}
\abs{\CZK(x,y)} \lesssim \abs{x-y}^{-\ds}
\text{ if } x \neq y \text{ and}
\end{equation}
\begin{equation}
\label{eq:K-reg}
\abs{\CZK(x,y)-\CZK(x',y)}+\abs{\CZK(y,x)-\CZK(y,x')} \lesssim \frac{\abs{x-x'}^{\tau}}{\abs{x-y}^{\ds+\tau}}
\text{ if }
\abs{x-x'} < \frac12 \abs{x-y}.
\end{equation}
Suppose that the associated truncated integral operators
\[
T_{\Rmin}^{\Rmax}f(x) := \int_{\Rmin<\abs{x-y}<\Rmax} \CZK(x,y) f(y) \dif y
\]
are bounded on $L^{2}(\R^{\ds})$ uniformly in $0<\Rmin<\Rmax<\infty$ (this condition follows from boundedness of a CZ operator associated to $\CZK$ on $L^{2}(\R^{\ds})$).
We define the associated maximally polynomially modulated, maximally truncated singular integral operator by
\begin{equation}
\label{eq:T}
Tf(x):=\sup_{Q\in\calQ_{d}} \sup_{0< \Rmin \leq \Rmax < \infty}
\abs[\Big]{\int_{\Rmin \leq \abs{x-y} \leq \Rmax} \CZK(x,y) e^{2\pi i Q(y)} f(y) \dif y}
\end{equation}
for $f\in L^{1}_{\mathrm{loc}}(\R^{\ds})$, where $d\geq 1$ and $\calQ_{d}$ denotes the set of all polynomials in $\ds$ variables with real coefficients and degree at most $d$.

\begin{theorem}
\label{thm:main}
The operator \eqref{eq:T} is bounded on $L^{p}(\R^{\ds})$ for every $1 < p < \infty$.
\end{theorem}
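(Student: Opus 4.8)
The plan is to combine a time--frequency analysis adapted to polynomial phases, in the spirit of Lie's polynomial Carleson theorem, with the Calder\'on--Zygmund machinery needed to descend from a model operator to a general H\"older kernel, and with a sparse domination argument that delivers the full range $1<p<\infty$ at once. By measurable selection I would first linearize the two suprema in \eqref{eq:T}: fix measurable choices $x\mapsto Q_{x}\in\calQ_{d}$ and $0<R(x)\le S(x)<\infty$ and bound $f\mapsto\int_{R(x)\le\abs{x-y}\le S(x)}\CZK(x,y)e^{2\pi iQ_{x}(y)}f(y)\,\dif y$ with constants independent of these choices. Expanding $Q_{x}$ about $x$, the constant term contributes only a unimodular factor and is discarded, so we may assume $Q_{x}(y)=\sum_{1\le\abs{\alpha}\le d}c_{\alpha}(x)(y-x)^{\alpha}$. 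A dyadic decomposition $\CZK=\sum_{k\in\Z}\CZK_{k}$ with $\supp\CZK_{k}\subset\Set{2^{k-1}\le\abs{x-y}\le 2^{k+1}}$ splits the operator into single-scale pieces $T_{k}$ inheriting from \eqref{eq:K-size}--\eqref{eq:K-reg} the size bound $\abs{\CZK_{k}}\lesssim 2^{-k\ds}$ and a $\tau$-H\"older bound at scale $2^{k}$; it suffices to treat $\sum_{k\in F}T_{k}$ uniformly over finite $F\subset\Z$ and then pass to the limit. A Rademacher--Menshov / square-function estimate reduces the maximal truncation over all $(R(x),S(x))$ to the lacunary partial sums, and the $L^{2}$ hypothesis on $T_{\Rmin}^{\Rmax}$ is exactly what controls the unmodulated ``top'' contribution $\sum_{k}T_{k}$, as in the classical theory.

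\textbf{The model $L^{2}$ estimate.} The core is a local $L^{2}$ bound for the linearized single-truncation operator, which I would prove by time--frequency analysis, most likely organized as an induction on the degree $d$. For each scale $k$ tile $\R^{\ds}$ into cubes of side $2^{k}$; on such a cube the relevant frequency datum is the rescaled jet $(2^{k\abs{\alpha}}\partial^{\alpha}Q_{x})_{\abs{\alpha}\le d}$, and discretizing it yields wave packets. The decisive dichotomy --- after Stein--Wainger and Lie --- is that at each point and scale the phase is either essentially affine at that scale, in which case the contribution is dominated by a multidimensional \emph{linear} Carleson maximal operator (the case $d=1$, i.e.\ Sj\"olin's theorem, which for a general H\"older CZ kernel follows by the same methods together with Carleson--Hunt), or it genuinely oscillates, in which case van der Corput / non-stationary phase estimates give a power gain in $2^{k}$; the H\"older regularity \eqref{eq:K-reg} furnishes precisely the smoothing these estimates need, at the price of $\tau$ entering the gains. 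A stopping-time decomposition sorts cubes and scales into a ``forest'' of regions carrying a single affine approximation, whose cardinality is controlled by an $L^{2}$ mass/energy counting estimate; on each region the induction hypothesis at degree $d-1$, applied after subtracting the locally affine part of $Q_{x}$, closes the loop.

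\textbf{From $L^{2}$ to $L^{p}$.} Rather than pursue a weak-type endpoint, I would upgrade the local estimate of the previous step to a sparse domination bound --- by the usual recursive selection of stopping cubes on which the local averages of $f$ and of a test function $g$ are large, applying the single-scale estimate on the complements --- of a form that yields boundedness on $L^{p}(\R^{\ds})$ for every $1<p<\infty$ simultaneously (and, as a bonus, quantitative weighted bounds). If a clean sparse bound proves awkward because of the maximal truncation, one can instead combine the $L^{2}$ bound with a Cotlar-type / good-$\lambda$ inequality and interpolate, using duality below $p=2$; the sparse route is cleaner and self-improving across the whole range.

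\textbf{Main obstacle.} I expect the principal difficulty to be the genuinely nonlinear coupling of scales: the triangle inequality over the pieces $T_{k}$ diverges logarithmically, so oscillation must be exploited, and because $Q_{x}$ depends on $x$ and has degree up to $d$ the ``frequency'' is a point-dependent polynomial jet; organizing wave packets into trees and forests according to closeness of these jets, and proving the attendant mass/energy counting estimates, is the technical heart. Subsidiary but real issues are that the kernel is only H\"older rather than smooth, forcing the oscillatory estimates to be run with a carefully metered amount of smoothing and $\tau$ to be tracked throughout, and that the maximal truncation must be carried through the whole argument --- adding bookkeeping but, thanks to the Rademacher--Menshov reduction, no essentially new idea.
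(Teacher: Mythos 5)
There are genuine gaps. The most serious one is the passage to the range $1<p<2$. A sparse domination with $L^{2}$ averages, which is all your model $L^{2}$ estimate could plausibly yield, only self-improves \emph{upward}, i.e.\ it gives $L^{\tilde p}$ bounds for $2<\tilde p<\infty$; to reach exponents below $2$ by sparse methods you would need sparse forms with $L^{q}$ averages for $q$ near $1$ on the $f$ side, and the known arguments producing such forms for maximally modulated operators take an unweighted weak-type $(q,q)$ bound as \emph{input} --- which for $q<2$ is exactly what is at stake. The fallback you mention (``good-$\lambda$ and duality below $p=2$'') does not work either: the operator is not self-adjoint after linearization and no $L^{p_0}$ bound with $p_0<2$ is available to interpolate against. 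The paper's route around this is the content of Theorem~\ref{thm:loc}: one proves \emph{localized} $L^{2}$ estimates, in particular \eqref{eq:loc:F} with the exceptional set $\tilde F=\{M\one_F>\kappa\}$ and a gain $\kappa^{\alpha}$, and then Bateman's extrapolation (Lemma~\ref{lem:extrapolation}) converts these into restricted weak type $(p,p)$ bounds for $1<p<2$, with real interpolation finishing the job; obtaining the decay in $\kappa$ through the tree, forest and error-term estimates (Corollary~\ref{cor:tree}, Propositions~\ref{prop:Fef-forest} and~\ref{prop:sf-loc}) is a substantial part of the work and is entirely missing from your plan.

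The second gap is in the model $L^{2}$ estimate itself. The proposed induction on the degree $d$ --- ``affine at this scale $\Rightarrow$ linear Carleson; otherwise van der Corput gain, then apply the degree-$(d-1)$ hypothesis after subtracting the locally affine part'' --- is not known to close, and neither Lie's argument nor this paper proceeds that way. After subtracting an $x$- and scale-dependent affine approximation the remaining phases still range over the full degree-$d$ class modulo affine terms, which is not a lower-degree instance of the same problem in any uniform sense; the actual proofs discretize the \emph{whole} polynomial space $\calQ$ into uncertainty regions (Lemma~\ref{lem:tile}), run the density/mass selection and the Fefferman forest decomposition (Proposition~\ref{prop:fef-forest}) in that space, and exploit oscillation only through the separation of trees (Lemma~\ref{lem:sep-trees} via Lemma~\ref{lem:osc-int}). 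Your base case also quietly assumes Sj\"olin's theorem for general non-convolution H\"older CZ kernels with only the $L^{2}$ bound of truncations as a black box, which is itself part of what has to be proved here. Finally, reducing the maximal truncation by Rademacher--Menshov costs a logarithm in the number of scales, which is unbounded; the paper instead keeps the (linearized) truncations and absorbs them inside the single-tree estimate through the non-tangential Cotlar inequality (Lemma~\ref{lem:tree}), using the assumed uniform $L^{2}$ bounds for $T_{\Rmin}^{\Rmax}$ precisely there.
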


Theorem~\ref{thm:main} extends several previous results.
\begin{enumerate}
\item Carleson's theorem \cite{MR0199631,MR0238019} is the case $\ds=d=1$, $\CZK(x,y)=1/(x-y)$ (alternative proofs are due to C.~Fefferman \cite{MR0340926} and Lacey and Thiele \cite{MR1783613}).
\item Sj\"olin's multidimensional Carleson theorem \cite{MR0336222,MR1781088} is the translation invariant case $\CZK(x,y)=\CZK(x-y)$ with $d=1$ (see also \cite{MR2007237,MR2031458} for an alternative proof using methods from \cite{MR1783613}).
\item Ricci and Stein's oscillatory singular integrals \cite{MR890662} arise if $\sup_{Q}$ is replaced by $Q=Q_{x}$ that itself depends polynomially on $x$ (see also \cite{arxiv:1701.05249} for sparse bounds).
\item Stein and Wainger \cite{MR1364908,MR1879821} restricted the supremum over $Q$ in such a way as to eliminate modulation invariance by linear phases.
\item V.~Lie \cite{MR2545246,arXiv:1105.4504} proved the general polynomial case $d\geq 1$ with $\CZK(x,y)=1/(x-y)$ in dimension $\ds=1$.
\item A non-translation invariant extension of Carleson's theorem was considered in \cite{MR2654142}.
\end{enumerate}

By the extrapolation argument introduced in \cite{MR3148061} (see Appendix~\ref{sec:extrapolation} for details), Theorem~\ref{thm:main} is a consequence of the following localized $L^{2}$ estimates.
\begin{theorem}
\label{thm:loc}
Let $0 \leq \alpha < 1/2$ and $0 \leq \nu,\kappa < \infty$.
Let $F,G \subset\R^{\ds}$ be measurable subsets and $\tilde{F} := \Set{ M\one_{F} > \kappa }$, $\tilde{G} := \Set{ M\one_{G} > \nu }$, where $M$ denotes the Hardy--Littlewood maximal operator.
Then
\begin{align}
\norm{T}_{2 \to 2} &\lesssim 1, \label{eq:loc:full}\\
\norm{\one_{G} T \one_{\R^{\ds}\setminus \tilde{G}}}_{2 \to 2} &\lesssim_{\alpha} \nu^{\alpha}, \label{eq:loc:G}\\
\norm{\one_{\R^{\ds} \setminus \tilde{F}} T \one_{F} }_{2 \to 2} &\lesssim_{\alpha} \kappa^{\alpha}. \label{eq:loc:F}
\end{align}
\end{theorem}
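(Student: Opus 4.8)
All three estimates will be extracted from a single time--frequency decomposition, in the spirit of the proofs of Lacey--Thiele and of Lie, adapted here to general H\"older CZ kernels and to arbitrary dimension $\ds$.

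\emph{Linearization and discretization.}
Linearize $T$ by choosing measurable selectors $x\mapsto Q_{x}\in\calQ_{d}$ and $x\mapsto(\Rmin(x),\Rmax(x))$ that realize the suprema in \eqref{eq:T} up to a factor $2$, so that $T$ is replaced by a linear operator; it then suffices to bound $\abs{\langle Tf,g\rangle}$ with $f$ supported in $F$ and $g$ supported in $G$ (for \eqref{eq:loc:full} take $F=G=\R^{\ds}$, and observe that $\one_{\R^{\ds}\setminus\tilde F}T\one_{F}$ is the map $h\mapsto\one_{\R^{\ds}\setminus\tilde F}T(\one_{F}h)$, whose input is supported in $F$, and symmetrically for \eqref{eq:loc:G}). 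Decompose the kernel dyadically, $\CZK=\sum_{s\in\Z}\CZK_{s}$ with $\CZK_{s}$ supported where $\abs{x-y}\sim 2^{s}$; using \eqref{eq:K-size} and \eqref{eq:K-reg}, each $\CZK_{s}$ is, up to a rapidly summable error, a finite superposition of tensor products of $L^{\infty}$-normalized bumps adapted to dyadic cubes of side $\sim 2^{s}$. On such a cube $I$, the modulation $e^{2\pi i Q_{y}}$ is essentially determined by the degree-$d$ Taylor jet of $Q_{x}$ at the center of $I$; this yields a collection of \emph{tiles} $p=(I_{p},\omega_{p})$, with $I_{p}$ a dyadic cube and $\omega_{p}$ a cap in the finite-dimensional space of polynomial jets at scale $\abs{I_{p}}^{1/\ds}$, normalized so that $I_{p}$ and $\omega_{p}$ are dual.

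\emph{Trees and the single-tree estimate.}
Organize the tiles into \emph{trees}: a tree $\TT$ with top $(I_{\TT},\xi_{\TT})$ consists of tiles $p$ with $I_{p}\subseteq I_{\TT}$ whose cap $\omega_{p}$ contains the corresponding jet of $\xi_{\TT}$. The central analytic input is a single-tree estimate bounding the contribution $\sum_{p\in\TT}\langle T_{p}f,g\rangle$ by, roughly, $\mathrm{dens}_{F}(I_{\TT})\,\mathrm{dens}_{G}(I_{\TT})\,\abs{I_{\TT}}\,\norm{f}_{\infty}\norm{g}_{\infty}$, where $\mathrm{dens}_{F},\mathrm{dens}_{G}$ measure the density of $F,G$ in dilates of $I_{\TT}$. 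The induction on the degree $d$ enters at this point, as in Lie's argument: after subtracting the top polynomial $\xi_{\TT}$ from every phase in the tree, the residual phases are flat to first order at the linearization point on each cube of the tree, so their non-stationary contribution is controlled by van der Corput and sublevel-set estimates in the jet variable, reducing matters to the degree $d-1$ instance of Theorem~\ref{thm:loc} (in the base case this is Sj\"olin's oscillatory-integral argument); the remaining stationary part is absorbed into a maximally truncated CZ operator, handled via the hypothesis $\norm{T_{\Rmin}^{\Rmax}}_{2\to 2}\lesssim 1$ together with a Cotlar-type inequality.

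\emph{Orthogonality, tree selection, and conclusion.}
The second ingredient is an almost-orthogonality (Bessel/energy) estimate: for any family of trees whose tops have pairwise separated caps, $\sum_{\TT}\mathrm{size}(\TT)^{2}\abs{I_{\TT}}\lesssim\norm{f}_{2}^{2}$, and dually for $g$, proved by a $TT^{*}$ computation exploiting that wave packets with well-separated caps are nearly orthogonal. One then runs the standard greedy selection: repeatedly extract trees of maximal size $\sim 2^{-n}$, bound the total measure of the selected tops at level $n$ by Bessel, estimate each tree by the single-tree bound, and sum the resulting geometric series in $n$. This gives
\[
\abs{\langle Tf,g\rangle}\lesssim\mathrm{dens}_{F}^{\theta}\,\mathrm{dens}_{G}^{\theta'}\,\norm{f}_{2}\norm{g}_{2}
\]
for any $\theta,\theta'<1/2$, with the relevant global densities of $F,G$. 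Taking $F=G=\R^{\ds}$ gives \eqref{eq:loc:full}. For \eqref{eq:loc:F}, the output restriction to $\R^{\ds}\setminus\tilde F=\Set{M\one_{F}\leq\kappa}$ forces every cube carrying output mass to see density $\lesssim\kappa$ of $F$, producing the factor $\kappa^{\alpha}$ upon taking $\theta=\alpha$; \eqref{eq:loc:G} is symmetric, the input support in $\R^{\ds}\setminus\tilde G=\Set{M\one_{G}\leq\nu}$ giving $\nu^{\alpha}$.

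\emph{Main obstacle.}
The hard part is the single-tree estimate, for two intertwined reasons. First, $\CZK$ is only H\"older continuous and not translation invariant, so the wave-packet calculus cannot lean on exact modulation or dilation symmetry and has to be built by dyadic/martingale methods with all error terms tracked quantitatively. Second, the degree induction requires that, once the tree-top polynomial is removed, the phase families along a tree uniformly land within the scope of the degree $d-1$ statement; in dimension $\ds>1$ this needs a careful non-degeneracy and pigeonholing analysis of polynomial jets (the multidimensional analogue of the sublevel-set estimates behind Sj\"olin's theorem). Keeping the maximal truncation compatible with the modulation after the frequency has been recentered, so that a Cotlar-type inequality remains available, is the remaining technical point.
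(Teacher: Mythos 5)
There are two genuine gaps in your plan, and they sit exactly at the points you flag as the ``central analytic input.'' First, the proposed induction on the degree $d$ in the single-tree estimate does not work as stated: subtracting the tree-top polynomial $\xi_{\TT}$ from the phases $Q_{x}$ does \emph{not} lower their degree, so there is no ``degree $d-1$ instance'' of the theorem to appeal to. What the tree structure actually gives (and what the paper uses in Lemma~\ref{lem:tree}) is that the residual phase $Q_{x}-Q_{\TT}$ has $\norm{\cdot}_{I_{\Tp}}$-norm $O(1)$ on each tile of the tree, by the defining condition $4\Tp<\top\TT$ together with Lemma~\ref{lem:normQ}; this lets one strip the modulation off entirely, at the cost of errors controlled by the Hardy--Littlewood maximal function and an operator $S$, reducing the tree to the \emph{unmodulated} maximally truncated operator $T_{\calN}$, which is handled by the $L^{2}$ hypothesis on the truncations plus Cotlar's inequality. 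No induction on $d$ occurs anywhere, and it is not clear any such induction can be set up, since the phase family inside a tree is constrained in norm, not in degree.

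Second, the Bessel/energy inequality $\sum_{\TT}\mathrm{size}(\TT)^{2}\abs{I_{\TT}}\lesssim\norm{f}_{2}^{2}$ for trees with separated caps is not available in this setting by a direct $TT^{*}$ computation: with a merely H\"older CZ kernel and polynomial phases, the only orthogonality between separated tiles or trees is the van der Corput bound of Lemma~\ref{lem:osc-int}, which yields decay like $\Delta^{-\tau/d}$ (Lemmas~\ref{lem:sep-tiles} and \ref{lem:sep-trees}) --- far too weak to sum a Lacey--Thiele-type energy over all trees. This is precisely why the paper abandons the size/energy greedy selection in favour of Fefferman's mass/density scheme: tiles are selected by the density $\mdens_{k}$ built from the sets $\mE(\Tp)$, the selected forests carry the counting-function bound \eqref{eq:F-forest-counting}, the trees inside a forest are made $2^{Cn}$-separated and split into rows, and the missing orthogonality is recovered \emph{spatially}, via Lie's iterated stopping time of Lemma~\ref{lem:spatial-decomposition}, the generation-orthogonality of Proposition~\ref{prop:Fef-forest:small-support}, and Cotlar--Stein; this is what makes a direct $L^{2}\to L^{2}$ bound possible without interpolation. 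Relatedly, your treatment of \eqref{eq:loc:F} as ``symmetric'' to \eqref{eq:loc:G} glosses over a real asymmetry: in the paper \eqref{eq:loc:G} is a minor variant of \eqref{eq:loc:full} (the density of all remaining tiles is $\lesssim\nu$), whereas \eqref{eq:loc:F} needs the localized tree estimate of Corollary~\ref{cor:tree} under hypothesis \eqref{eq:tree-loc}, the localized error bound of Proposition~\ref{prop:sf-loc}, and a $\kappa$-dependent Cotlar--Stein splitting, since the gain in $\kappa$ enters through the input side of each tree rather than through the tile density.
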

The estimate \eqref{eq:loc:full} is a special case of both \eqref{eq:loc:G} and \eqref{eq:loc:F}, but we formulate and prove it separately because it is the easiest case.

The estimate \eqref{eq:loc:G} is used in the range $2<p<\infty$.
It is also possible to reduce Theorem~\ref{thm:main} in this range to the case $p=2$.
Indeed, it can be shown using known techniques that, for any $1\leq p<\infty$, an unweighted weak type $(p,p)$ estimate for the operator \eqref{eq:T} implies that this operator can be dominated by sparse operators with $L^{p}$ means (see \cite[Theorem 4.3.2]{beltran-phdthesis} and \cite{MR3484688} for the shortest available proof of this implication).
This in turn implies strong type $(\tilde p,\tilde p)$ estimates (even vector-valued \cite{arxiv:1709.09647} and with a certain class of Muckenhoupt weights) for all $p<\tilde p<\infty$.
The observation that weighted estimates for maximally modulated singular integrals can be obtained using unweighted estimates as a black box by essentially the same argument as without the modulations goes back to \cite{MR0338655} and was expounded in \cite{MR2115460,MR3291794,arxiv:1410.6085,arxiv:1611.03808}.

Since the above discussion shows that the strength of Theorem~\ref{thm:main} decreases with $p$, it is unsurprising that \eqref{eq:loc:G} can be obtained by a minor variation of the proof of \eqref{eq:loc:full}.
Nevertheless, we hope that the simplicity of this localized estimate can motivate the more difficult localization argument required to prove the estimate \eqref{eq:loc:F}, that is used in the range $1 < p < 2$.

The following ingredients of our proof have appeared in previous works.
\begin{enumerate}
\item The overall structure of the argument (in particular the decomposition into trees, the selection algorithm in Section~\ref{sec:tree-selection}, the single tree estimate, and the splitting into rows) is due to C.~Fefferman \cite{MR0340926}.
\item The discretization of the space of polynomials has the same properties (parts \ref{def:tile:ball} and \ref{def:tile:cover} of Lemma~\ref{lem:tile}) as in \cite{MR2545246,arXiv:1105.4504}.
\item The iteration of the Fefferman selection algorithm between stopping times as in Lemma~\ref{lem:spatial-decomposition} and the associated spatial orthogonality argument in Section~\ref{sec:orth-stop-gen} have been introduced in \cite{arXiv:1105.4504}.
This is the main tool that allows to obtain $L^{2} \to L^{2}$ estimates directly (without interpolation with $L^{p}$, $p<2$).
Earlier arguments, starting with \cite{MR0340926}, only use $L^{\infty}$-forests of generation $k=0$ (see Definition~\ref{def:F-forest}).
\item The selection of and estimates for antichains and boundary parts of trees in Proposition~\ref{prop:fef-forest} and Proposition~\ref{prop:sf}, respectively, are adapted from \cite{arXiv:1105.4504}.
\item The extrapolation of localized $L^{2}$ estimates to $L^{p}$ estimates was found by Bateman in connection with the directional Hilbert transform \cite{MR3148061}.
\item For the usual Carleson operator (case $d=\ds=1$), the localized estimates in Theorem~\ref{thm:loc} are contained in \cite[estimate (76) in arxiv version 2]{arxiv:1707.05484} (more generally, that article also deals with the $r$-variational Carleson operator, in which case the range of $\alpha$ also depends on the variational exponent $r$).
A different approach to localization can be found in \cite{arxiv:1612.03028}.
\end{enumerate}

The following elements are new in this context.
\begin{enumerate}
\item In Lemma~\ref{lem:spatial-decomposition}, we use a single stopping time for all densities.
  This helps to ensure that all trees in the decomposition \eqref{eq:tree-dec} are convex (unlike the version of the argument from \cite{arXiv:1105.4504} explained in \cite{MR3334208}).
  Also, we consider all dyadic scales at once rather than splitting them in congruence classes modulo a large integer.
  This is crucial for general CZ kernels (that do not satisfy a cancellation condition), since removing some scales from a general CZ operator can destroy its $L^{2}$ boundedness.
\item Our tiles are nested both in space and in frequency (part \ref{def:tile:nested} of Lemma~\ref{lem:tile}), similarly to \cite{MR0340926} and differently from \cite{MR2545246,arXiv:1105.4504}.
This is achieved using a variant of the Christ grid cubes construction and simplifies the combinatorics of tiles.
\item We estimate oscillatory integrals using a single scale van der Corput type estimate (Lemma~\ref{lem:osc-int}, adapted from \cite{MR1879821}).
This allows us to substantially reduce the regularity hypothesis on the kernel $\CZK$ compared to the previous works in which this issue was raised \cite{MR0336222,arxiv:1710.10962}.
\item We use the $L^{2}(\R^{\ds})$ boundedness of truncated operators associated to $\CZK$ as a black box.
This hypothesis can be verified for example using a $T(b)$ theorem.
\item We apply the extrapolation idea from \cite{MR3148061} in the context of a Fefferman type argument for the Carleson operator.
The required localized estimate is obtained by an argument that resembles the single tree estimate in \cite{MR1783613}.
Specifically, in Lemma~\ref{lem:tree} we obtain sharp decay and in Proposition~\ref{prop:sf-loc} almost sharp decay in both localization parameters.
\end{enumerate}

It appears plausible that our proof should also work for CZ kernels adapted to an anisotropic group of dilations (see \cite{arxiv:1710.10962} for a recent result in this setting) using a discretization based on Christ grid cubes \cite{MR1096400} also in space.

Two different approaches to $L^{p}$ estimates for the (polynomial) Carleson operator in the range $1<p<2$ appear in \cite{arXiv:1105.4504,arXiv:1712.03092} and in \cite{MR3148602}.
Our approach is closer to the latter, and it seems possible to obtain Lorentz space estimates near $L^{1}$ combining our arguments with the ideas in \cite{MR3148602}.
However, I have not been able to recover the best known estimates for the Carleson operator in this way.

\begin{ack}
I thank
\begin{enumerate}
\item Victor Lie and Stefan Oberd\"orster for pointing out a few errors in previous versions of this article,
\item Stefan Ober\-d\"orster, João Pedro Ramos, Olli Saari, and Christoph Thiele for participating in a study group and supplying numerous suggestions for simplification and improvement of exposition, and
\item Camil Muscalu for pointing out the connection between the localized estimates in Theorem~\ref{thm:loc} and the helicoidal method.
\item The anonymous referees for supplying several suggestions for improvement of exposition.
\end{enumerate}
The author was partially supported by DFG SFB 1060 and the Hausdorff Center for Mathematics in Bonn (DFG EXC 59).
\end{ack}

\section{Discretization}
\label{sec:discretization}
Modifying the notation used in the introduction, we denote by $\calQ$ the vector space of all real polynomials in $\ds$ variables of degree at most $d$ modulo $+\R$.
That is, we identify two polynomials if and only if their difference is constant.
This identification is justified by the fact that the absolute value of the integral in \eqref{eq:T} does not depend on the constant term of $Q$.
Notice that $Q(x)-Q(x')\in\R$ is well-defined for $Q\in\calQ$ and $x,x'\in\R^{\ds}$.

Let $D=D(d,\ds)$ be a large integer to be chosen later.
Let $\psi$ be a smooth function supported on the interval $[1/(4D),1/2]$ such that $\sum_{s\in\Z} \psi(D^{-s}\cdot) \equiv 1$ on $(0,\infty)$.
Then the kernel can be decomposed as
\[
\CZK(x,y) = \sum_{s\in\Z} \CZK_s(x,y)
\text{ with }
\CZK_{s}(x,y) := \CZK(x,y) \psi(D^{-s} \abs{x-y}).
\]
The functions $\CZK_{s}$ are supported on the sets $\Set{(x,y)\in\R^{\ds}\times\R^{\ds} \given D^{s-1}/4<\abs{x-y}<D^{s}/2}$ and satisfy
\begin{equation}
\label{eq:Ks-size}
\abs{\CZK_{s}(x,y)} \lesssim D^{-\ds s}
\text{ for all } x, y \in \R^{\ds},
\end{equation}
\begin{equation}
\label{eq:Ks-reg}
\abs{\CZK_{s}(x,y)-\CZK_{s}(x',y)}+\abs{\CZK_{s}(y,x)-\CZK_{s}(y,x')} \lesssim \frac{\abs{x-x'}^{\tau}}{D^{(\ds+\tau)s}}
\text{ for all } x, x', y \in \R^{\ds}.
\end{equation}
We can replace the maximal operator \eqref{eq:T} by the smoothly truncated operator
\begin{equation}
\label{eq:T-smooth-trunc}
Tf(x):=\sup_{Q\in\calQ_{d}} \sup_{\smin \leq \smax \in \Z}
\abs[\Big]{ \sum_{s=\smin(x)}^{\smax(x)} \int \CZK_{s}(x,y) e(Q(y)) f(y) \dif y},
\end{equation}
where $e(t)=e^{2\pi i t}$ denotes the standard character on $\R$,
at the cost of an error term that is controlled by the Hardy--Littlewood maximal operator $M$ (the required localized estimates for $M$ are easy, see Lemma~\ref{lem:M-loc}).

Since the absolute value of the integral in \eqref{eq:T-smooth-trunc} is a continuous function of $Q$, we may restrict $\smin,\smax,Q$ to a finite set as long as we prove estimates that do not depend on this finite set.
After these preliminary reductions, we can linearize the supremum in \eqref{eq:T-smooth-trunc} and replace that operator by
\begin{equation}
\label{eq:T-linearized}
Tf(x):=\sum_{s=\smin(x)}^{\smax(x)} \int \CZK_{s}(x,y) e(Q_{x}(x)-Q_{x}(y)) f(y) \dif y,
\end{equation}
where $\smin,\smax : \R^{\ds}\to\Z$, $Q_{\cdot}:\R^{\ds}\to\calQ$ are measurable functions with finite range.
Let $\sumin := \min_{x\in\R^{\ds}} \smin(x) > -\infty$ and $\sumax := \max_{x\in\R^{\ds}} \smax(x) < +\infty$.
All stopping time constructions will start at the largest scale $\sumax$ and terminate after finitely many steps at the smallest scale $\sumin$.

\subsection{Tiles}
\label{sec:tiles}
The grid of $D$-adic cubes in $\R^{\ds}$ will be denoted by
\[
\calD := \bigcup_{s \in \Z} \calD_{s},
\quad
\calD_{s} := \Set[\big]{ \prod_{i=1}^{\ds}[D^{s}a_{i},D^{s}(a_{i}+1)) \given a_{1},\dotsc,a_{\ds}\in\Z}.
\]
We denote elements of $\calD$ by the letters $I$, $J$ and call them \emph{grid cubes}.
The unique integer $s=\scale(I)$ such that $I\in\calD_{s}$ will be called the \emph{scale} of a grid cube.
The \emph{parent} of a grid cube $I$ is the unique grid cube $\hat{I} \supset I$ with $\scale(\hat{I}) = \scale(I)+1$.
The side length of a cube $I$ is denoted by $\ell(I)$.
If $I$ is a cube and $a>0$, then $aI$ denotes the concentric cube with side length $a\ell(I)$.

For every bounded subset $I\subset\R^{\ds}$, we define a norm on $\calQ$ by
\begin{equation}
\label{eq:normI}
\norm{Q}_{I} := \sup_{x,x'\in I} \abs{Q(x)-Q(x')},
\quad
Q\in\calQ.
\end{equation}

\begin{lemma}
\label{lem:normQ}
If $Q\in\calQ$ and $B(x,r) \subset B(x,R) \subset \R^{\ds}$, then
\begin{align}
\norm{Q}_{B(x,R)}
&\lesssim_{d} \label{eq:normQ:up}
(R/r)^{d} \norm{Q}_{B(x,r)},\\
\norm{Q}_{B(x,r)}
&\lesssim_{d} \label{eq:normQ:low}
(r/R) \norm{Q}_{B(x,R)}.
\end{align}
\end{lemma}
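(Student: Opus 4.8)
The plan is to rescale to a fixed ball and then to invoke equivalence of norms on the finite dimensional space $\calQ$. Fix $Q\in\calQ$ and $x\in\R^{\ds}$, and let $a_{\beta}:=\partial^{\beta}Q(x)/\beta!$, for multi-indices $\beta$ with $1\le\abs{\beta}\le d$, be the non-constant Taylor coefficients of $Q$ at $x$. For $\rho>0$ introduce the rescaled polynomial
\[
R_{\rho}Q(z):=Q(x+\rho z)-Q(x)=\sum_{1\le\abs{\beta}\le d}a_{\beta}\rho^{\abs{\beta}}z^{\beta},
\]
which is again an element of $\calQ$, now viewed as a polynomial in $z$. Since the seminorm \eqref{eq:normI} on a ball is unchanged when a constant is added to $Q$, the change of variables $w=x+\rho z$ (a bijection $B(0,1)\to B(x,\rho)$) gives
\[
\norm{Q}_{B(x,\rho)}=\norm{R_{\rho}Q}_{B(0,1)}
\qquad\text{for every }\rho>0.
\]

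Next I would observe that $P\mapsto\norm{P}_{B(0,1)}$ is not merely a seminorm but a genuine norm on $\calQ$, since a polynomial that is constant on $B(0,1)$ is constant, hence zero in $\calQ$. On the other hand, the map sending $P\in\calQ$ to the tuple of its non-constant monomial coefficients is a linear isomorphism onto $\R^{\dim\calQ}$, so $P\mapsto\sum_{1\le\abs{\beta}\le d}\abs{c_{\beta}(P)}$ is a second norm on $\calQ$. As $\calQ$ is finite dimensional (of dimension depending only on $d$ and $\ds$), these two norms are equivalent; applying the equivalence to $R_{\rho}Q$ and using the previous display yields
\[
\norm{Q}_{B(x,\rho)}\asymp_{d}\sum_{1\le\abs{\beta}\le d}\abs{a_{\beta}}\rho^{\abs{\beta}}
\qquad\text{for every }\rho>0.
\]

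Finally, both claimed estimates follow by comparing this right-hand side at $\rho=R$ and $\rho=r$ term by term, using that $B(x,r)\subset B(x,R)$ forces $r\le R$, together with $1\le\abs{\beta}\le d$. From $R^{\abs{\beta}}=(R/r)^{\abs{\beta}}r^{\abs{\beta}}\le(R/r)^{d}r^{\abs{\beta}}$ we obtain \eqref{eq:normQ:up}, and from $r^{\abs{\beta}}=(r/R)^{\abs{\beta}}R^{\abs{\beta}}\le(r/R)R^{\abs{\beta}}$, valid because $\abs{\beta}\ge1$, we obtain \eqref{eq:normQ:low}. There is no serious obstacle here: the only points needing (minor) care are that $\norm{\cdot}_{B(0,1)}$ is a bona fide norm and not just a seminorm on $\calQ$, and that the constant produced by the norm-equivalence step depends only on $\dim\calQ$, hence only on $d$ and $\ds$, and in particular not on $Q$, $x$, $r$, or $R$.
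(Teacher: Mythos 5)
Your proposal is correct, and it is essentially the paper's argument: both proofs reduce to a fixed normalized ball and exploit finite-dimensionality of $\calQ$ to compare the oscillation seminorm with the size of the coefficients, after which the two bounds follow from the scaling of monomials of degree between $1$ and $d$. The only difference is cosmetic: where you invoke abstract equivalence of norms on $\calQ$ to get $\norm{Q}_{B(x,\rho)}\asymp_{d}\sum_{1\le\abs{\beta}\le d}\abs{a_{\beta}}\rho^{\abs{\beta}}$, the paper first reduces to one variable along rays and recovers the coefficients via Lagrange interpolation, which is just a concrete instance of the same finite-dimensionality step.
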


\begin{proof}
By translation, we may assume $x=0$, and we choose a representative for the congruence class modulo $+\R$ with $Q(0)=0$.

To show \eqref{eq:normQ:up}, suppose by scaling that $r=1$ and $\norm{Q}_{B(x,r)}=1$.
The coefficients of $Q$ can now be recovered from its values on the unit ball using a multivariate Lagrange interpolation formula, see e.g.\ \cite[Theorem 3.1]{MR317511}.
In particular, these coefficients are $O_{d,\ds}(1)$, and the conclusion follows.

Similarly, to show \eqref{eq:normQ:low}, suppose by scaling that $R=1$ and $\norm{Q}_{B(x,R)}=1$.
Then the coefficients of $Q$ are again $O_{d,\ds}(1)$, and the conclusion follows.
\end{proof}

\begin{corollary}
\label{cor:normQ}
If $D$ is sufficiently large, then, for every $I \in \calD$ and $Q \in \calQ$, we have
\begin{equation}
\label{eq:normQ:parent}
\norm{Q}_{\hat{I}} \geq 10^{4} \norm{Q}_{I}.
\end{equation}
\end{corollary}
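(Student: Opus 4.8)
The plan is to derive the estimate from Lemma~\ref{lem:normQ} alone, by sandwiching $I$ and $\hat{I}$ between concentric balls and combining \eqref{eq:normQ:up} with \eqref{eq:normQ:low}. Write $\ell := \ell(I)$, so that $\ell(\hat{I}) = D\ell$, and let $c_I, c_{\hat{I}}$ denote the centers of $I$ and $\hat{I}$. The mechanism behind the gain is \eqref{eq:normQ:low}: on a ball that is a factor $\sim D$ smaller than a concentric one, $\norm{Q}$ drops by a factor $\sim 1/D$, reflecting that a polynomial modulo constants oscillates at most linearly in the radius of a small ball. The expense in the opposite direction, coming from \eqref{eq:normQ:up}, will be a constant depending only on $d$ and $\ds$ and not on $D$, so a sufficiently large $D=D(d,\ds)$ beats the target $10^{4}$.

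Concretely, I would argue as follows. Since $I$ is contained in its circumball, $\norm{Q}_I \le \norm{Q}_{B(c_I,\sqrt{\ds}\,\ell)}$. Next, apply \eqref{eq:normQ:low} to the concentric pair $B(c_I,\sqrt{\ds}\,\ell) \subset B(c_I,R_1)$ with $R_1 := \tfrac{\sqrt{\ds}}{2}D\ell$ (legitimate once $D\ge 2$), which produces $\norm{Q}_{B(c_I,\sqrt{\ds}\,\ell)} \lesssim_d D^{-1}\norm{Q}_{B(c_I,R_1)}$. Then move the center from $c_I$ to $c_{\hat{I}}$: since both centers lie in $\hat{I}$ we have $\abs{c_I-c_{\hat{I}}} \le \diam(\hat{I}) = \sqrt{\ds}\,D\ell$, hence $B(c_I,R_1) \subseteq B(c_{\hat{I}},R_2)$ with $R_2 := R_1 + \sqrt{\ds}\,D\ell = \tfrac{3\sqrt{\ds}}{2}D\ell$. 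Apply \eqref{eq:normQ:up} to the concentric pair $B(c_{\hat{I}},R_3)\subset B(c_{\hat{I}},R_2)$, where $R_3 := \tfrac{D\ell}{2}$ is the inradius of $\hat{I}$; since $R_2/R_3 = 3\sqrt{\ds}$ is independent of $D$, this costs only a constant, $\norm{Q}_{B(c_{\hat{I}},R_2)} \lesssim_d (3\sqrt{\ds})^d \norm{Q}_{B(c_{\hat{I}},R_3)}$. Finally $B(c_{\hat{I}},R_3) \subseteq \hat{I}$ gives $\norm{Q}_{B(c_{\hat{I}},R_3)} \le \norm{Q}_{\hat{I}}$. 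Chaining these five inequalities yields $\norm{Q}_I \lesssim_{d,\ds} D^{-1}\norm{Q}_{\hat{I}}$, so choosing $D=D(d,\ds)$ large enough gives \eqref{eq:normQ:parent}.

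There is no genuine obstacle here, but the one point requiring care is that $I$ need not be centrally placed in $\hat{I}$ — indeed $I$ may be one of the corner subcubes, in which case no ball around $c_I$ of radius larger than $\tfrac12\ell$ fits inside $\hat{I}$. This is precisely why the argument cannot be a single invocation of Lemma~\ref{lem:normQ} and must instead route through the recentering inclusion $B(c_I,R_1)\subseteq B(c_{\hat{I}},R_2)$, at the cost of the harmless dimensional constant absorbed by \eqref{eq:normQ:up}.
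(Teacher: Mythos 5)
Your proof is correct, and it is essentially the argument the paper intends: the corollary is stated without proof as an immediate consequence of Lemma~\ref{lem:normQ}, obtained exactly as you do by sandwiching $I$ and $\hat{I}$ between balls so that \eqref{eq:normQ:low} yields the decisive factor $D^{-1}$ while \eqref{eq:normQ:up} (needed only because $I$ may sit in a corner of $\hat{I}$) costs a constant depending only on $d$ and $\ds$, after which $D=D(d,\ds)$ is chosen large.
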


We choose $D$ so large that \eqref{eq:normQ:parent} holds.

\begin{definition}
\label{def:pair}
A \emph{pair} $\Tp$ consists of a \emph{spatial cube} $I_{\Tp}\in\calD$ and a Borel measurable subset $\calQ(\Tp) \subset \calQ$ that will be called the associated \emph{uncertainty region}.
Abusing the notation, we will say that $Q\in \Tp$ if and only if $Q\in \calQ(\Tp)$.
Also, $\scale(\Tp):=\scale(I_{\Tp})$.

\begin{lemma}
\label{lem:tile}
There exist collections of pairs $\TP_{I}$, indexed by the grid cubes $I\in\calD$ with $\sumin \leq \scale(I) \leq \sumax$, such that
\begin{enumerate}
\item\label{def:tile:ball} To each $\Tp \in \TP_{I}$ is associated a \emph{central polynomial} $Q_{\Tp}\in\calQ$ such that
\begin{equation}
\label{eq:tile:ball}
B_{I}(Q_{\Tp},0.2) \subset \calQ(\Tp) \subset B_{I}(Q_{\Tp},1),
\end{equation}
where $B_{I}(Q,r)$ denotes the ball with center $Q$ and radius $r$ with respect to the norm \eqref{eq:normI},
\item\label{def:tile:cover} for each grid cube $I \in \calD$, the uncertainty regions $\Set{\calQ(\Tp) \given \Tp\in\TP_{I}}$ form a disjoint cover of $\calQ$, and
\item\label{def:tile:nested} if $I \subseteq I'$, $\Tp \in \TP_{I}$, $\Tp' \in \TP_{I'}$, then either $\calQ(\Tp) \cap \calQ(\Tp') = \emptyset$ or $\calQ(\Tp) \supseteq \calQ(\Tp')$.
\end{enumerate}
\end{lemma}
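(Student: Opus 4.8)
The plan is to construct the collections $\TP_I$ by downward recursion on the scale, starting at $\scale(I)=\sumax$ and, for $\scale(I)<\sumax$, obtaining $\TP_I$ by \emph{coarsening} the partition of $\calQ$ attached to the parent cube $\hat{I}$; this is a scale-by-scale variant of the Christ grid cube construction. To each $\Tp\in\TP_I$ we attach the spatial cube $I_\Tp:=I$, a central polynomial $Q_\Tp\in\calQ$, and the Borel set $\calQ(\Tp)$. The only facts about the norms \eqref{eq:normI} I will use are that each $\norm{\cdot}_I$ is a genuine norm on the finite-dimensional space $\calQ$ (if $\norm{Q}_I=0$ then $Q$ is constant on the nonempty open cube $I$, hence $Q=0$ in $\calQ$) and the contraction estimate $\norm{Q}_I\leq 10^{-4}\norm{Q}_{\hat{I}}$ of Corollary~\ref{cor:normQ}, valid for all $Q\in\calQ$. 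By a \emph{net} of a finite-dimensional normed space I mean a maximal subset all of whose pairwise distances are at least $0.5$; such a set exists by Zorn's lemma, is countable, meets every bounded set in a finite set, and has the property that every point of the space lies at distance less than $0.5$ from some net point. Fixing an enumeration of a net, one can then choose its nearest point to a given element measurably (and this choice is unique when the nearest point is strictly nearest).

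At the top scale $\scale(I)=\sumax$ fix a net $\Set{Q_\Tp \given \Tp\in\TP_I}$ of $(\calQ,\norm{\cdot}_I)$ and let $\calQ(\Tp)$ be the Voronoi cell of $Q_\Tp$ with ties broken by the enumeration, so the $\calQ(\Tp)$ are Borel and partition $\calQ$. The $0.5$-separation yields $B_I(Q_\Tp,0.2)\subset\calQ(\Tp)$ and maximality yields $\calQ(\Tp)\subset B_I(Q_\Tp,r_{\sumax})$ with $r_{\sumax}:=0.5$, which is \ref{def:tile:ball} and \ref{def:tile:cover}.

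For the recursive step, assume $\TP_{I'}$ is already defined for all $I'$ with $\scale(I')=s+1\leq\sumax$ and satisfies \ref{def:tile:ball}, \ref{def:tile:cover}, together with $\calQ(\Tp')\subset B_{I'}(Q_{\Tp'},r_{s+1})$ for a constant $r_{s+1}<1$; let $\scale(I)=s\geq\sumin$, so $\scale(\hat{I})=s+1$. Fix a net $\Set{Q_\Tp \given \Tp\in\TP_I}$ of $(\calQ,\norm{\cdot}_I)$, assign to each $\Tp'\in\TP_{\hat{I}}$ its nearest net point $Q_\Tp$ (ties broken by the enumeration), and put $\calQ(\Tp):=\bigcup\Set{\calQ(\Tp') \given \Tp'\in\TP_{\hat{I}}\text{ assigned to }\Tp}$. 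These are Borel; since the $\calQ(\Tp')$ partition $\calQ$, so do the $\calQ(\Tp)$, giving \ref{def:tile:cover}. Each $\calQ(\Tp)$ is a union of members of the partition $\Set{\calQ(\Tp') \given \Tp'\in\TP_{\hat{I}}}$, so \ref{def:tile:nested} holds for $I'=\hat{I}$, and for general $I\subseteq I'$ by composing this along the finite chain of parents from $I$ to $I'$ (all of whose scales lie in $[\sumin,\sumax]$). For \ref{def:tile:ball}, note that $Q\in\calQ(\Tp')$ implies $\norm{Q-Q_{\Tp'}}_{\hat{I}}\leq r_{s+1}$, hence $\norm{Q-Q_{\Tp'}}_I\leq 10^{-4}r_{s+1}$ by the contraction. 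Thus if $\norm{Q-Q_\Tp}_I\leq 0.2$ and $Q$ lies in the old cell $\calQ(\Tp')$, then $\norm{Q_{\Tp'}-Q_\Tp}_I\leq 0.2+10^{-4}$, while $\norm{Q_{\Tp'}-Q_{\tilde{\Tp}}}_I\geq 0.5-0.2-10^{-4}>0.2+10^{-4}$ for every other net point $Q_{\tilde{\Tp}}$, so $\Tp'$ is assigned to $\Tp$ and $Q\in\calQ(\Tp)$; this proves $B_I(Q_\Tp,0.2)\subset\calQ(\Tp)$, and in particular no $\calQ(\Tp)$ is empty. On the other hand, by maximality the net point assigned to $\Tp'$ is at distance $<0.5$ from $Q_{\Tp'}$ in $\norm{\cdot}_I$, so any $Q\in\calQ(\Tp)$ satisfies $\norm{Q-Q_\Tp}_I<10^{-4}r_{s+1}+0.5=:r_s$. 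Since $r_{\sumax}=0.5$ and $r_s=0.5+10^{-4}r_{s+1}$, induction gives $r_s<0.5+10^{-4}<1$ for all $s$, so $\calQ(\Tp)\subset B_I(Q_\Tp,1)$ throughout and the recursion closes.

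The construction is otherwise routine; the one point that needs attention is keeping the radii $r_s$ of the enclosing balls below $1$ across all scales in $[\sumin,\sumax]$, which is exactly what the factor $10^4$ in Corollary~\ref{cor:normQ} buys: it turns the bound into the rapidly convergent iteration $r_s=0.5+10^{-4}r_{s+1}$, which never exceeds $0.5+10^{-4}$. Measurability of the Voronoi cells and of the old-to-new cell assignments is dealt with uniformly by fixing an enumeration of each (countable) net.
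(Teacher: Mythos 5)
Your proposal is correct and follows essentially the same route as the paper: construct the partitions at the top spatial scale $\sumax$ from a maximal separated net, then coarsen downward by assigning each parent-scale cell to a nearby net point at the child scale, with the factor $10^{4}$ from Corollary~\ref{cor:normQ} keeping the enclosing radii summable (your $r_{s}=0.5+10^{-4}r_{s+1}$ is the paper's $0.7+10^{-4}r_{s+1}$ with slightly different constants). The explicit Voronoi/nearest-point bookkeeping and tie-breaking you add are harmless refinements of the paper's ``choose a partition between the $0.3$- and $0.7$-balls'' step.
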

This is similar to the construction of Christ grid cubes, but easier, because we can start at a smallest scale, and we do not need a small boundary property.

The requirement \eqref{eq:tile:ball} on the uncertainty regions $\calQ(\Tp)$ is dictated by Lemma~\ref{lem:osc-int}.
The uncertainty regions used in \cite{MR2545246,arXiv:1105.4504} in the case $\ds=1$ also satisfy \eqref{eq:tile:ball}, up to multiplicative constants.
However, it is convenient not to prescribe the exact shape of the uncertainty regions, in order to obtain the nestedness property \eqref{def:tile:nested}.
\begin{proof}
For each $I\in\calD$, choose a maximal $0.7$-separated subset $\calQ_{I} \subset \calQ$ with respect to the $I$-norm \eqref{eq:normI}.
Then the balls $B_{I}(Q,0.3)$, $Q\in \calQ_{I}$, are disjoint, and the balls $B_{I}(Q,0.7)$, $Q\in \calQ_{I}$, cover $\calQ$.
Hence, there exists a partition $\calQ = \cup_{Q\in \calQ_{I}} \tilde\calQ(I,Q)$ such that $B_{I}(Q,0.3) \subset \tilde\calQ(I,Q) \subset B_{I}(Q,0.7)$.
We fix such a partition for each $I \in \calD$.

Next, we construct nested partitions $\calQ = \cup_{Q\in \calQ_{I}} \calQ(I,Q)$ for $I \in \calD_{s}$ in decreasing order of the scale $s$, starting with $s=\sumax$.
For $I \in \calD_{\sumax}$ and $Q\in\calQ_{I}$, we let $\calQ(I,Q) := \tilde\calQ(I,Q)$.

Suppose now that $\calQ(I',Q)$ have been constructed for some $I'\in\calD$.
For a grid cube $I\in\calD$ with $\widehat{I} = I'$ and $Q\in \calQ_{I}$, we define
\[
\calQ(I,Q) := \cup_{Q' \in \calQ_{I'} \cap \tilde\calQ(I,Q)} \calQ(I',Q').
\]
By downward induction on $s$, we will show that, for every $I\in\calD_{s}$ and $Q \in \calQ_{I}$, we have
\begin{equation}
\label{eq:tile:ball-construction}
B_{I}(Q,0.2) \subset \calQ(I,Q) \subset B_{I}(Q,1).
\end{equation}
For $s=\sumax$, this holds by construction, so suppose that $s < \sumax$ and that the claim is known with $I$ replaced by $I':=\widehat{I}$.
For every $Q' \in \calQ_{I'} \setminus \tilde\calQ(I,Q)$ and $\tilde{Q} \in \calQ(I',Q')$, we have
\[
\norm{Q-\tilde{Q}}_{I}
\geq
\norm{Q-Q'}_{I}
-
\norm{Q'-\tilde{Q}}_{I}
\geq
0.3
-
10^{-4} \norm{Q'-\tilde{Q}}_{I'}
\geq
0.3
-
10^{-4} \cdot 1
\geq
0.2,
\]
where we used \eqref{eq:normQ:parent} and the inductive hypothesis.
This shows the first inclusion in \eqref{eq:tile:ball-construction}.

For every $Q' \in \calQ_{I'} \cap \tilde\calQ(I,Q)$ and $\tilde{Q} \in \calQ(I',Q')$, we have
\[
\norm{Q-\tilde{Q}}_{I}
\leq
\norm{Q-Q'}_{I}
+
\norm{Q'-\tilde{Q}}_{I}
\leq
0.7
+
10^{-4} \norm{Q'-\tilde{Q}}_{I'}
\leq
0.7
+
10^{-4} \cdot 1
\leq
1,
\]
where we again used \eqref{eq:normQ:parent} and the inductive hypothesis.
This shows the second inclusion in \eqref{eq:tile:ball-construction}.

Finally, the required collections of pairs will be defined by
\[
\TP_{I} := \Set{ (I,\calQ(I,Q)) \given Q \in \calQ_{I}}.
\]
We have verified \eqref{eq:tile:ball} in \eqref{eq:tile:ball-construction}, and the remaining properties easily follow from the construction.
\end{proof}

\begin{definition}
\label{def:tile}
Fixing a choice of collections of pairs from Lemma~\ref{lem:tile}, we write
\[
\TP := \bigcup_{s=\sumin}^{\sumax} \bigcup_{I\in\calD_{s}} \TP_{I}.
\]
The pairs in the set $\TP$ are called \emph{tiles}.
\end{definition}

For a pair $\Tp$, let
\begin{align*}
E(\Tp) &:= \Set{x\in I_{\Tp} \given Q_x\in \calQ(\Tp) \land \smin(x) \leq \scale(\Tp) \leq \smax(x)},\\
\mE(\Tp) &:= \Set{x\in I_{\Tp} \given Q_x\in \calQ(\Tp) \land \scale(\Tp) \leq \smax(x)}.
\end{align*}
The need for the latter notion becomes apparent in the tree estimate, see \eqref{eq:leaf-mass}.

For every tile $\Tp\in\TP$, we define the corresponding operator
\begin{equation}
\label{eq:Ttile}
T_{\Tp}f(x) := \one_{E(\Tp)}(x) \int e(Q_{x}(x)-Q_{x}(y)) \CZK_{\scale(\Tp)}(x,y) f(y) \dif y.
\end{equation}
\end{definition}
The tile operators and their adjoints
\begin{equation}
\label{eq:Ttile*}
T_{\Tp}^{*}g(y) = \int e(-Q_{x}(x)+Q_{x}(y)) \overline{\CZK_{\scale(\Tp)}(x,y)} (\one_{E(\Tp)}g)(x) \dif x
\end{equation}
have the support properties
\begin{equation}
\label{eq:Ttile-supp}
\supp T_{\Tp}f \subseteq I_{\Tp},
\qquad
\supp T_{\Tp}^{*}g \subseteq I_{\Tp}^{*} := 2 I_{\Tp}
\end{equation}
for any $f,g \in L^{1}_{\mathrm{loc}}(\R^{\ds})$.
For a collection of tiles $\TC \subset \TP$, we write $T_{\TC} := \sum_{\Tp\in\TC} T_{\Tp}$.
Then the linearized operator \eqref{eq:T-linearized} can be written as $T_{\TP}$.

\subsection{General notation}
The characteristic function of a set $I$, as well as the corresponding multiplication operator, is denoted by $\one_{I}$.
The \emph{Hardy--Littlewood maximal operator} is given by
\[
Mf(x) := \sup_{x\in I}\frac{1}{\abs{I}}\int_{I}\abs{f},
\]
the latter supremum being taken over all (not necessarily grid) cubes containing $x$.
For $1<q<\infty$, the \emph{$q$-maximal operator} is given by
\begin{equation}
\label{eq:HL-q-max-op}
M_{q}f := (M \abs{f}^{q})^{1/q}.
\end{equation}

Parameters $\epsilon,\eta$ (standing for small numbers) and $C$ (standing for large numbers) are allowed to change from line to line, but may only depend on $d,\ds,\tau$, and the implicit constants related to $\CZK$, unless an additional dependence is indicated by a subscript.

For $A,B>0$, we write $A\lesssim B$ (resp. $A\gtrsim B$) in place of $A<CB$ (resp. $A>CB$).
If the constant $C=C_{\delta}$ depends on some quantity $\delta$, then we may write $A\lesssim_{\delta}B$.

The operator norm on $L^{2}(\R^{\ds})$ is denoted by $\norm{T}_{2\to 2} := \sup_{\norm{f}_{2} \leq 1} \norm{Tf}_{2}$.

\section{Tree selection algorithm}
\label{sec:dens-selection}

\subsection{Spatial decomposition}
\label{sec:spatial-decomposition}
We begin with a simplified version of V.~Lie's stopping time construction from \cite{arXiv:1105.4504}.

\begin{definition}\label{def:ord}
Let $\Tp,\Tp'$ be pairs.
We say that
\begin{align*}
\Tp< \Tp' &:\iff I_{\Tp}\subsetneq I_{\Tp'} \text{ and } \calQ(\Tp') \subseteq \calQ(\Tp),\\
\Tp\leq \Tp' &:\iff I_{\Tp}\subseteq I_{\Tp'} \text{ and } \calQ(\Tp') \subseteq \calQ(\Tp).
\end{align*}
\end{definition}
The relations $<$ and $\leq$ are transitive, similarly to \cite{MR0340926} and differently from \cite{arXiv:1105.4504}.

\begin{definition}
A \emph{stopping collection} is a subset $\calF \subset\calD$ of the form $\calF=\cup_{k\geq 0}\calF_{k}$, where each $\calF_{k}$ is a collection of pairwise disjoint cubes such that, for each $F\in\calF_{k+1}$, there exists $F'\in\calF_{k}$ with $F'\supsetneq F$ ($F'$ is called the \emph{stopping parent of $F$}).
The collection of \emph{stopping children} of $F\in\calF_{k}$ is $\ch_{\calF}(F) := \Set{ F'\in\calF_{k+1} \given F'\subset F}$.
More generally, the collection of \emph{stopping children} of $I\in\calD$ is $\ch_{\calF}(I) := \Set{ F\in\calF \text{ maximal} \given F\subsetneq I }$.
We denote by $\ch^{m}$ the set of children of $m$-th generation, that is,
\[
\ch^{0}(I) := \Set{I},
\quad
\ch^{m+1}(I) := \cup_{I'\in\ch^{m}(I)} \ch(I').
\]
\end{definition}
\begin{lemma}
\label{lem:spatial-decomposition}
There exists a stopping collection $\calF$ with the following properties.
\begin{enumerate}
\item\label{it:F0}
$\calF_{0} = \calD_{\sumax}$.
\item\label{it:children}
For each $F\in\calF$, we have
\begin{equation}
\label{eq:Lie-support-decay}
\sum_{F'\in\ch(F)} \abs{F'} \leq D^{-10 \ds} \abs{F}.
\end{equation}
\item\label{it:stopping-neighbors}
For each $k \geq 0$, the set of grid cubes
\begin{equation}
\label{eq:tildeCk}
\tilde\calC_{k} := \Set{ I\in\calD \given \exists F\in\calF_{k} : I \subseteq F}
\end{equation}
satisfies
\begin{equation}
\label{eq:tildeCk-Whitney}
I \in \tilde\calC_{k}, I'\in\calD, I' \subset 3I, s(I') < s(I)
\implies
I' \in \tilde\calC_{k}.
\end{equation}
\item\label{it:counting-fct}
For $k\geq 0$, define the set of grid cubes
\begin{equation}
\label{eq:Ck}
\calC_{k} := \tilde\calC_{k} \setminus \tilde\calC_{k+1}
\end{equation}
and the corresponding set of tiles
\begin{equation}
\label{eq:Pnk}
\TP_{k} := \Set{ \Tp\in\TP \given I_{\Tp} \in \calC_{k}}.
\end{equation}
Then, for every $n\geq 1$, the set of tiles
\begin{equation}
\label{eq:Mnk}
\TM_{n,k} := \Set{ \Tp\in\TP_{k} \text{ maximal w.r.t.~``$<$'' } \given \abs{ \mE(\Tp)}/\abs{I_{\Tp}} \geq 2^{-n} }
\end{equation}
satisfies
\begin{equation}
\label{eq:Mnk-counting}
\norm[\big]{\sum_{\Tp\in\TM_{n,k}} \one_{I_{\Tp}}}_{\infty} \lesssim 2^{n} \log (n+1).
\end{equation}
\end{enumerate}
\end{lemma}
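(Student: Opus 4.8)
The plan is to construct the collections $\calF_{k}$ by induction on $k$, running at each step a Fefferman-type selection inside the cubes of $\calF_{k}$ and promoting into $\calF_{k+1}$ the region where the resulting counting functions become too large. We set $\calF_{0}:=\calD_{\sumax}$, so that $\tilde\calC_{0}$ consists of all grid cubes of scale $\leq\sumax$ and \eqref{eq:tildeCk-Whitney} holds trivially for $k=0$. Assume $\calF_{k}$ has been constructed and satisfies \eqref{eq:tildeCk-Whitney}. For each $F\in\calF_{k}$ and $n\geq 1$ let $\TM_{n}^{F}$ be the collection of tiles $\Tp$ with $I_{\Tp}\subseteq F$ that are maximal, with respect to the order ``$<$'' of Definition~\ref{def:ord}, among all such tiles with $\abs{\mE(\Tp)}\geq 2^{-n}\abs{I_{\Tp}}$, and put $N_{n}^{F}:=\sum_{\Tp\in\TM_{n}^{F}}\one_{I_{\Tp}}$. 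The crucial point is that the sets $\mE(\Tp)$, $\Tp\in\TM_{n}^{F}$, are pairwise disjoint: if $y\in\mE(\Tp)\cap\mE(\Tp')$ then $Q_{y}\in\calQ(\Tp)\cap\calQ(\Tp')$, so by parts \ref{def:tile:cover} and \ref{def:tile:nested} of Lemma~\ref{lem:tile} one uncertainty region contains the other, which forces $\Tp\leq\Tp'$ or $\Tp'\leq\Tp$, and then $<$-maximality gives $\Tp=\Tp'$. Since $\mE(\Tp)\subseteq I_{\Tp}$ with $\abs{\mE(\Tp)}\geq 2^{-n}\abs{I_{\Tp}}$, this yields the packing estimate $\sum_{\Tp\in\TM_{n}^{F}:\,I_{\Tp}\subseteq Q}\abs{I_{\Tp}}\leq 2^{n}\abs{Q}$ for every cube $Q$; hence on each grid cube $N_{n}^{F}$ differs from a constant by a function of average at most $2^{n}$, so $\norm{N_{n}^{F}}_{\mathrm{BMO}}\lesssim 2^{n}$ and John--Nirenberg gives $\meas{\Set{x\in F\given N_{n}^{F}(x)>t}}\lesssim e^{-ct/2^{n}}\abs{F}$. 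Choosing $\lambda_{n}:=C2^{n}(\log(n+1)+\log D)$ with $C$ large, so that $\meas{\Set{x\in F\given N_{n}^{F}(x)>\lambda_{n}}}\leq(n+1)^{-2}D^{-20\ds}\abs{F}$, and setting $G^{F}:=\bigcup_{n\geq 1}\Set{x\in F\given N_{n}^{F}(x)>\lambda_{n}}$ and $G:=\bigcup_{F\in\calF_{k}}G^{F}$, we obtain $\abs{G^{F}}\lesssim D^{-20\ds}\abs{F}$, and $G$ is a union of minimal grid cubes with $G^{F}\subseteq F$.

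Next define $\tilde\calC_{k+1}$ to be the smallest family of grid cubes containing every grid cube $I\subseteq G$ and closed under the rule ``$I\in\tilde\calC_{k+1}$, $I'\subset 3I$, $s(I')<s(I)$ $\Rightarrow$ $I'\in\tilde\calC_{k+1}$'', and let $\calF_{k+1}$ be its maximal elements. The family $\tilde\calC_{k+1}$ is then automatically downward closed, \eqref{eq:tildeCk-Whitney} for $k+1$ is exactly its closure property, and the inductive hypothesis for $k$ — together with the observation that, since $\abs{G^{F}}\ll\abs{F}$, no grid cube contained in $G$ can contain a cube of $\calF_{k}$ — shows $\tilde\calC_{k+1}\subseteq\tilde\calC_{k}$, so $\calF_{k+1}$ strictly refines $\calF_{k}$ and \ref{it:F0} holds (and the construction terminates once cubes reach scale $\sumin$, where $G^{F}=\emptyset$). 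For \eqref{eq:Lie-support-decay} one checks that running the closure outward from the maximal grid cubes of $G^{F}$ only enlarges the occupied region by a bounded dilation factor $c_{D}$ (with $c_{D}\to 3$ as $D\to\infty$), so $\sum_{F'\in\ch_{\calF}(F)}\abs{F'}=\meas[\big]{\bigcup_{I\in\tilde\calC_{k+1},\,I\subseteq F}I}\leq c_{D}^{\ds}\abs{G^{F}}\leq D^{-10\ds}\abs{F}$ once $D$ is large.

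For \ref{it:counting-fct}, first note $\TM_{n,k}\subseteq\TM_{n}^{F}$ for the cube $F\in\calF_{k}$ containing $I_{\Tp}$: if $\Tp\in\TM_{n,k}$ had a competitor $\Tp'>\Tp$ with $\abs{\mE(\Tp')}\geq 2^{-n}\abs{I_{\Tp'}}$ and $I_{\Tp'}\subseteq F$, then $I_{\Tp'}\notin\tilde\calC_{k+1}$ (otherwise $I_{\Tp}\subsetneq I_{\Tp'}$ would put $I_{\Tp}\in\tilde\calC_{k+1}$, contradicting $I_{\Tp}\in\calC_{k}$), so $\Tp'\in\TP_{k}$, contradicting $<$-maximality of $\Tp$ in $\TP_{k}$. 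Now fix $x$ with $\sum_{\Tp\in\TM_{n,k}}\one_{I_{\Tp}}(x)>0$ and let $I_{*}$ be the smallest among the cubes $I_{\Tp}$ with $\Tp\in\TM_{n,k}$ and $x\in I_{\Tp}$; these cubes all lie in one $F\in\calF_{k}$ and all contain $I_{*}$. Since $I_{*}\in\calC_{k}$ it is not in $\tilde\calC_{k+1}$, hence not contained in $G$, so there is $z\in I_{*}\setminus G$; then every tile of $\TM_{n,k}$ lying above $x$ also lies above $z$, whence $\sum_{\Tp\in\TM_{n,k}}\one_{I_{\Tp}}(x)\leq N_{n}^{F}(z)\leq\lambda_{n}\lesssim 2^{n}\log(n+1)$, which is \eqref{eq:Mnk-counting}. (That the implicit constant may depend on $D$, hence on $d,\ds$, is permitted.)

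The one genuinely delicate step is the quantitative counting bound: the passage from the packing estimate to the exponential distributional inequality for $N_{n}^{F}$, and the resulting threshold $\lambda_{n}\sim 2^{n}\log(n+1)$. This is the combinatorial heart of Fefferman's argument \cite{MR0340926} in the iterated form used by V.~Lie \cite{arXiv:1105.4504}, and it rests entirely on the pairwise disjointness of the sets $\mE(\Tp)$, hence on the frequency nestedness in part \ref{def:tile:nested} of Lemma~\ref{lem:tile}. The remaining bookkeeping around the Whitney closure is routine but must be arranged so that a \emph{single} collection $\calF_{k+1}$ serves all density levels $n$ at once; this is what ultimately yields convex trees and lets us treat all dyadic scales simultaneously, as promised in the introduction.
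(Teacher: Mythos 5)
Your overall route is the same as the paper's: a single bad set serving all density levels $n$, the disjointness of the sets $\mE(\Tp)$ for $<$-maximal tiles giving a Carleson packing condition, John--Nirenberg for the counting functions, and a Whitney-type closure before passing to $\calF_{k+1}$. Your treatment of parts \ref{it:F0}, \ref{it:stopping-neighbors} and \ref{it:counting-fct} is correct, and your argument for \eqref{eq:Mnk-counting} (a cube of $\calC_{k}$ cannot be contained in the bad set, so the count at any point is dominated by the count at a point $z$ outside the bad set) is precisely the step the paper dismisses as ``easy to verify''.

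The gap is in part \ref{it:children}. You bound $\sum_{F'\in\ch(F)}\abs{F'}$ by $\meas[\big]{\bigcup_{I\in\tilde\calC_{k+1},\,I\subseteq F}I}\leq c_{D}^{\ds}\abs{G^{F}}$, i.e.\ you assume that the part of $\tilde\calC_{k+1}$ lying inside $F$ is generated, up to a bounded dilation, by the bad set of $F$ alone. That is false: the Whitney closure is run on $G=\cup_{\tilde F\in\calF_{k}}G^{\tilde F}$, and the closure of a cube $I\subseteq G^{\tilde F}$ sitting near the common boundary of a neighboring stopping cube $\tilde F$ and $F$ produces smaller cubes inside $3I\cap F$, hence children of $F$, even when $G^{F}=\emptyset$; in that situation your claimed inequality fails outright. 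Handling this spillover is exactly the content of the paper's proof of \eqref{eq:Lie-support-decay}: one bounds the children of $F$ by $\sum_{\tilde F}\abs{\cup\calJ'(\tilde F)}\lesssim\sum_{\tilde F}\abs{B(\tilde F)}\leq c\sum_{\tilde F}\abs{\tilde F}$, the sum running over those $\tilde F\in\calF_{k}$ whose closure meets $F$, and then uses the inductive hypothesis \eqref{eq:tildeCk-Whitney} at level $k$ to show that any such $\tilde F$ satisfies $\dist(\tilde F,F)\leq\ell(\tilde F)$ and $s(\tilde F)\leq s(F)+1$ (otherwise $3\tilde F\supset\hat F$ would force $\hat F\in\tilde\calC_{k}$, contradicting maximality of $F$ in $\calF_{k}$), whence $\sum_{\tilde F}\abs{\tilde F}\lesssim\abs{F}$. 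Note that after this correction the smallness $D^{-10\ds}$ comes from making the John--Nirenberg threshold constant large relative to $\abs{\tilde F}$ (your $+\log D$ device serves that purpose), not from a dilation of $G^{F}$; your bounded-dilation observation is still needed, but only to justify $\abs{\cup\calJ'(\tilde F)}\lesssim\abs{B(\tilde F)}$.
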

The stopping property \eqref{it:stopping-neighbors} can be informally stated by saying that each stopping cube is completely surrounded by stopping cubes of the same generation $k$ and similar (up to $\pm 1$) scale.
This is very useful for handling tail estimates.
\begin{proof}
We start with $\calF_{0} := \calD_{\sumax}$ being the set of all cubes of the maximal spatial scale, this is part \ref{it:F0} of the conclusion.
Part \ref{it:stopping-neighbors} holds with $k=0$, since $\tilde\calC_{0}=\calD$.

Let now $k\geq 0$ and suppose that $\calF_{k}$ has been constructed already.
Let
\begin{equation}
\label{eq:tildeMnk}
\tilde\TM_{n,k} := \Set{ \Tp\in\TP \text{ maximal w.r.t.~``$<$'' } \given \abs{ \mE(\Tp)}/\abs{I_{\Tp}} \geq 2^{-n} \text{ and } I_{\Tp}\in \tilde\calC_{k} }.
\end{equation}
Since the sets $\mE(\Tp)$ corresponding to $\Tp\in\tilde\TM_{n,k}$ are pairwise disjoint, we have the Carleson packing condition
\[
\sum_{\Tp\in\tilde\TM_{n,k} : I_{\Tp}\subseteq J} \abs{I_{\Tp}}
\leq
2^{n}
\sum_{\Tp\in\tilde\TM_{n,k} : I_{\Tp}\subseteq J} \abs{\mE(\Tp)}
\leq
2^{n} \abs{J}
\text{ for every }
J\in\calD.
\]
Let $C$ be a large constant to be chosen later, and for $F\in\calF_{k}$ let
\begin{equation}
\label{eq:B(F)}
B(F) :=
\bigcup_{n\geq 1}\Set[\Big]{\sum_{\Tp\in\tilde\TM_{n,k} : I_{\Tp}\subseteq F} \one_{I_{\Tp}} \geq C2^{n}\log(n+1) }.
\end{equation}
By the John--Nirenberg inequality, we obtain
\[
\abs{B(F)}
\lesssim
\sum_{n\geq 1} e^{-c \frac{C2^{n}\log(n+1)}{2^{n}}} \abs{F}
\lesssim
\Big(\sum_{n\geq 1} (n+1)^{-cC}\Big) \abs{F}.
\]
The numerical constant on the right-hand side can be made arbitrarily small by taking $C$ sufficiently large.
Let $\calJ(F) \subset \calD$ be the set of grid cubes contained in $B(F)$, and let $\calJ'(F) \subset \calD$ be the minimal set containing $\calJ(F)$ and satisfying the analog of \eqref{eq:tildeCk-Whitney}, namely
\[
I \in \calJ'(F), I'\in\calD, I' \subset 3I, s(I') < s(I)
\implies
I' \in \calJ'(F).
\]
Let $\calF_{k+1}$ be the set of maximal cubes in $\tilde\calC_{k+1} = \cup_{F\in\calF_{k}} \calJ'(F)$.
Then Part~\ref{it:stopping-neighbors} of the conclusion with $k$ replaced by $k+1$ holds by construction.
Part~\ref{it:counting-fct} of the conclusion holds for the given $k$, because $\TM_{n,k} \subseteq \tilde\TM_{n,k}$, and we removed all tiles whose spatial cubes are contained in the sets \eqref{eq:B(F)}, where the overlap is large.

Let us now verify part \ref{it:children} of the conclusion for $F\in\calF_{k}$.
By disjointness of the maximal cubes, we have
\begin{align*}
\sum_{F'\in\ch(F)} \abs{F'}
&\leq
\sum_{\tilde F \in \calF_{k}} \abs{F \cap \cup \calJ'(\tilde F)}\\
&\leq
\sum_{\tilde F \in \calF_{k} : F \cap \cup \calJ'(\tilde F) \neq \emptyset} \abs{\cup \calJ'(\tilde F)}\\
&\lesssim
\sum_{\tilde F \in \calF_{k} : F \cap \cup \calJ'(\tilde F) \neq \emptyset} \abs{B(\tilde F)}\\
&\lesssim
c \sum_{\tilde F \in \calF_{k} : F \cap \cup \calJ'(\tilde F) \neq \emptyset} \abs{\tilde F},
\end{align*}
where the constant $c>0$ can be made arbitrarily small by choosing a suitably large $C$.
Moreover, if $\tilde F \in \calF_{k}$ is such that $F \cap \cup \calJ'(\tilde F) \neq \emptyset$, then $\dist(\tilde F,F) \lesssim \sup_{F' \in \calJ(\tilde{F})} \ell(F')$, and, choosing $C$ sufficiently large, we may assume $\dist(\tilde F,F) \leq \ell(\tilde F)$.
It follows that $s(\tilde F) \leq s(F) + 1$, since otherwise $3 \tilde F \supset \hat{F}$ and $s(\tilde F) > s(\hat{F})$, so that $\hat{F} \in \tilde\calC_{k}$ by the inductive hypothesis \eqref{eq:tildeCk-Whitney}, contradicting $F\in\calF_{k}$.
Therefore, the sum over $\tilde F$ in the above display is $\lesssim \abs{F}$.
\end{proof}

\subsection{Forest selection}
\label{sec:tree-selection}

A set of tiles $\TA\subset\TP$ is called an \emph{antichain} if no two tiles in $\TA$ are related by ``$<$'' (this is the standard order theoretic term for a concept already used in \cite{MR0340926} under a different name).
A set of tiles $\TC\subset\TP$ is called \emph{convex} if
\[
\Tp_1,\Tp_2\in\TC, \Tp\in\TP, \Tp_1< \Tp < \Tp_2 \implies \Tp\in\TC.
\]
We call a subset $\TD\subset\TC$ of a convex set $\TC\subset\TP$ a \emph{down subset} if $\Tp < \Tp'$ with $\Tp\in\TC$ and $\Tp'\in\TD$ implies $\Tp\in\TD$.
Unions of down subsets are again down subsets.
Both down subsets and their relative complements are convex.

For $a\geq 1$ and a tile $\Tp$, we will write $a\Tp$ for the pair $(I_{\Tp},B_{I_{\Tp}}(Q_{\Tp},a))$.
Counterintuitively, for $a' \geq a \geq 1$ and a tile $\Tp$, we have $a'\Tp \leq a\Tp$; this notational inconsistency cannot be avoided without breaking the convention used in all time-frequency analysis literature starting with \cite{MR0340926}.

\begin{definition}\label{def:tree}
A \emph{tree} (of generation $k$) is a convex collection of tiles $\TT\subset\TP_{k}$ together with a \emph{top tile} $\Tp_0 = \top\TT \in \TP_{k}$ such that for all $\Tp\in\TT$ we have $4\Tp<\Tp_0$.
To each tree, we associate the \emph{central polynomial} $Q_{\TT} = Q_{\top\TT}$ and the spatial cube $I_{\TT} = I_{\top \TT}$.
\end{definition}

\begin{definition}
\label{def:dist}
For $\Tp\in\TP$ and $Q\in\calQ$, we write
\[
\Delta(\Tp,Q) := \norm{Q_{\Tp}-Q}_{I_{\Tp}} + 1.
\]
\end{definition}

\begin{definition}\label{def:sep}
Two trees $\TT_1$ and $\TT_2$ are called \emph{$\Delta$-separated} if
\begin{enumerate}
\item $\Tp\in\TT_1\ \land\ I_{\Tp}\subseteq I_{\TT_{2}} \implies \Delta(\Tp,Q_{\TT_2})>\Delta$ and
\item $\Tp\in\TT_2\ \land\ I_{\Tp}\subseteq I_{\TT_{1}} \implies \Delta(\Tp,Q_{\TT_1})>\Delta$.
\end{enumerate}
\end{definition}

\begin{remark}
If $I_{\TT_{1}}\cap I_{\TT_{2}}=\emptyset$, then $\TT_{1}$ and $\TT_{2}$ are $\Delta$-separated for any $\Delta$.
\end{remark}

\begin{definition}\label{def:F-forest}
Let $n,k\in\N$.
An \emph{$L^{\infty}$-forest of level $n$ and generation $k$} is a disjoint union $\TF=\cup_{j}\TT_j$ of $2^{C n}$-separated trees $\TT_{j} \subset \TP_{k}$ (with a large constant $C$ to be chosen later) such that
\begin{equation}\label{eq:F-forest-counting}
\norm[\big]{\sum_{j}\one_{I_{\TT_{j}}}}_{\infty} \lesssim 2^n \log(n+1).
\end{equation}
\end{definition}

\begin{definition}\label{def:mass}
We define the \emph{maximal density} of a tile $\Tp\in\TP$ by
\begin{equation}\label{eq:densk}
\mdens_{k}(\Tp):=\sup_{\lambda\geq 2} \lambda^{-\dim\calQ} \sup_{\Tp'\in\TP_{k} : \lambda\Tp \leq \lambda\Tp'} \frac{\abs{\mE(\lambda \Tp')}}{\abs{I_{\Tp'}}},
\end{equation}
where $\dim\calQ$ is the dimension of the vector space $\calQ$.
We also write $\mdens_{k}(\TS) = \sup_{\Tp\in\TS} \mdens_{k}(\Tp)$ for sets of tiles $\TS\subset\TP_{k}$.
The subset of ``heavy'' tiles is defined by
\begin{equation}
\label{eq:heavy}
\TH_{n,k} := \Set{ \Tp\in\TP_{k} \given \mdens_{k}(\Tp) > C_{0} 2^{-n}},
\end{equation}
where $C_{0} = C_{0}(d,\ds) > 1$ is a sufficiently large constant to be chosen later.
\end{definition}
The maximal density is monotonic in the sense that if $\Tp_{1}\leq\Tp_{2}$ are in $\TP_{k}$, then $\mdens_{k}(\Tp_{1}) \geq \mdens_{k}(\Tp_{2})$.
Indeed, in this case by \eqref{eq:normQ:parent} we have $\lambda\Tp_{1}\leq\lambda\Tp_{2}$ for every $\lambda\geq 2$, and the claim follows by transitivity of $\leq$.
It follows that each set $\TH_{n,k} \subset \TP_{k}$ is a down subset, and in particular convex.

\begin{proposition}
\label{prop:fef-forest}
For every $n \geq 1$ and every $k\geq 0$, the set $\TH_{n,k}$ can be represented as the disjoint union of $O(n^{2})$ antichains and $O(n)$ $L^{\infty}$-forests of level $n$ and generation $k$.
\end{proposition}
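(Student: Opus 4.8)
The plan is to run a greedy Fefferman-type tree selection on $\TH_{n,k}$, control the overlap of the selected tree tops by a Carleson packing plus John--Nirenberg argument, and then distribute the trees among $O(n)$ Fefferman forests, with a residual combinatorial part absorbed into antichains. Recall that $\TH_{n,k}$ is a convex down-subset of $\TP_{k}$ (noted after Definition~\ref{def:mass}). Using Corollary~\ref{cor:normQ} together with parts~\ref{def:tile:ball} and~\ref{def:tile:cover} of Lemma~\ref{lem:tile}, one attaches to every heavy tile $\Tp$ a \emph{top tile} $\Tp_{0}(\Tp)\in\TP_{k}$ whose spatial cube strictly contains $I_{\Tp}$ (it may be taken a bounded number of scales above $I_\Tp$) and satisfies $4\Tp<\Tp_{0}(\Tp)$. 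Then iterate: while unremoved tiles of $\TH_{n,k}$ remain, pick one, $\Tp$, with $I_{\Tp}$ of maximal side length, let $\TT$ consist of all unremoved $\Tp''$ with $4\Tp''<\Tp_{0}(\Tp)$, and remove them. Each such $\TT$ is convex in $\TP$ (by Definition~\ref{def:ord} and \eqref{eq:tile:ball}), so $(\TT,\Tp_{0}(\Tp))$ is a tree of generation $k$; since $\TH_{n,k}$ is finite and every step removes at least $\Tp$, this terminates and partitions $\TH_{n,k}$ into trees $\TT_{1},\TT_{2},\dots$ with tops of non-increasing spatial scale.

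The quantitative heart is the bound $\norm[\big]{\sum_{i}\one_{I_{\TT_{i}}}}_{\infty}\lesssim 2^{n}\log(n+1)$ for the tops, which I would obtain exactly as in part~\ref{it:counting-fct} of Lemma~\ref{lem:spatial-decomposition}. Each top $\top\TT_{i}$ inherits heaviness, so by \eqref{eq:densk} there are $\lambda_{i}\geq 2$ and $\Tp'_{i}\in\TP_{k}$ with $\lambda_{i}(\top\TT_{i})\leq\lambda_{i}\Tp'_{i}$ and $\abs{\mE(\lambda_{i}\Tp'_{i})}/\abs{I_{\Tp'_{i}}}>C_{0}2^{-n}\lambda_{i}^{\dim\calQ}$; since the ratio is at most $1$ we get $\lambda_{i}\lesssim 2^{n/\dim\calQ}$, and a dyadic pigeonholing in $\lambda_{i}$ lets us fix $\lambda_{i}\in[2^{l},2^{l+1})$ for one value $l\in\{1,\dots,O(n)\}$ --- this pigeonholing is what ultimately contributes the factor $n$ to the number of antichains. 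Covering the frequency ball $B_{I_{\Tp'_{i}}}(Q_{\Tp'_{i}},\lambda_{i})$ by $O(\lambda_{i}^{\dim\calQ})$ of the unit uncertainty regions in $\TP_{I_{\Tp'_{i}}}$ and pigeonholing once more, one replaces $\Tp'_{i}$ by a tile of density $\gtrsim 2^{-n}$; by part~\ref{def:tile:nested} of Lemma~\ref{lem:tile} the associated sets $\mE(\cdot)$ are pairwise disjoint for $\leq$-incomparable tiles, which, combined with the greedy selection (so that distinct tops over a common cube have well-separated central polynomials), yields a Carleson packing condition $\sum_{i:\,I_{\TT_{i}}\subseteq J}\abs{I_{\TT_{i}}}\lesssim 2^{n}\abs{J}$ for $J\in\calD$. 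The John--Nirenberg inequality then upgrades this to the claimed $L^{\infty}$ bound with the logarithmic loss.

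It remains to distribute the trees $\TT_{i}$ among a bounded number of Fefferman forests for each fixed $l$ --- hence $O(n)$ forests in all --- together with $O(n^{2})$ antichains. Two trees with disjoint spatial tops are automatically $2^{Cn}$-separated by the Remark following Definition~\ref{def:sep}, so the only obstruction is a pair of trees with nested tops that fails to be $2^{Cn}$-separated. The standard remedy (Fefferman's splitting into rows) is to run the selection with a \emph{fattened} stopping rule, absorbing into $\TT$ every unremoved $\Tp''$ with $\Delta(\Tp'',Q_{\Tp_{0}(\Tp)})\leq 2^{Cn}$, so that two trees of one round with nested tops are genuinely $2^{Cn}$-separated; each resulting fat ``tree'' is then broken into honest trees by covering its fat frequency ball with unit balls, the overlap bound of the previous paragraph ensures \eqref{eq:F-forest-counting} inside each forest, and a short argument shows that after the first few rounds every surviving non-separation is witnessed inside an already-extracted forest. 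The trees whose central polynomials cluster too tightly to be separated in this way are not grouped into forests but are broken into antichains instead: within a fixed $l$ such a cluster has $<$-depth $O(n)$, and Mirsky's theorem splits it into $O(n)$ antichains, giving $O(n^{2})$ in total.

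I expect the genuine obstacle to be precisely this last organisational step: one must simultaneously obtain honest $2^{Cn}$-frequency-separation --- which forces the fat-tree/resplitting device, hence a large number of thin trees --- while keeping the number of forests at $O(n)$ rather than exponential in $n$, respecting the per-forest overlap bound \eqref{eq:F-forest-counting}, and carrying this out at the level of $L^{2}$ without interpolation with $L^{p}$, $p<2$. It is here that the convexity of all the trees --- secured by using the single stopping collection $\calF$ of Lemma~\ref{lem:spatial-decomposition} --- and the nestedness of tiles in both space and frequency (part~\ref{def:tile:nested} of Lemma~\ref{lem:tile}) do the essential work, following the Fefferman selection of \cite{MR0340926} and its iteration between stopping times in \cite{arXiv:1105.4504}.
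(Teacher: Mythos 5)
There is a genuine gap, and it sits exactly where you predicted: the passage from your greedily selected trees to $O(n)$ forests of pairwise $2^{Cn}$-separated trees. Your ``fattened stopping rule'' absorbs all tiles with $\Delta(\Tp'',Q_{\Tp_0(\Tp)})\leq 2^{Cn}$ and then re-splits each fat object into honest trees by covering the fat frequency ball with unit uncertainty regions. That covering has cardinality $\sim 2^{Cn\dim\calQ}$, and the resulting thin trees share the same spatial top cube with central polynomials at mutual distance $O(1)$, so no two of them are $2^{Cn}$-separated; by Definition~\ref{def:F-forest} each forest can absorb essentially one tree per separated cluster, so you would need exponentially many forests, not $O(n)$, and the claim that ``the overlap bound of the previous paragraph ensures \eqref{eq:F-forest-counting}'' does not address this. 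The paper's mechanism is different and is the key idea you are missing: it only ever produces a weak separation \eqref{eq:10sep} (a $10$-separation, obtained via Fefferman's counting trick with the sets $\TB(\Tp)=\Set{\Tm\in\TM_{n,k}\given 100\Tp\leq\Tm}$, stratified by $\abs{\TB(\Tp)}\in[2^{j},2^{j+1})$ --- this stratification, not a pigeonholing in the dilation parameter $\lambda$ of \eqref{eq:densk}, is what contributes the factor $n$ and forces non-separated tops to satisfy \eqref{eq:propto-Fef-trick}), and then \emph{amplifies} it to $2^{Cn}$-separation by deleting the bottom $O(n)$ layers of each tree (each layer an antichain) and invoking the geometric growth \eqref{eq:normQ:parent} of the norms across scales: a tile sitting $Cn$ scales above a $10$-separated tile is automatically $10^{4Cn}$-separated. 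No fattening and no re-splitting is needed, and the forest count stays at $O(n)$.

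A secondary gap: the $L^{\infty}$ overlap bound for your greedily chosen tops cannot be obtained ``exactly as in'' part~\ref{it:counting-fct} of Lemma~\ref{lem:spatial-decomposition}. The John--Nirenberg argument there only shows that the set where the counting function exceeds $C2^{n}\log(n+1)$ has small measure; the $L^{\infty}$ bound \eqref{eq:Mnk-counting} holds on $\calC_{k}$ only because that exceptional set was pushed into the next stopping generation when $\calF$ was constructed. Since your selection takes place inside the already fixed generation $k$, you cannot rerun this argument for arbitrary tops; you must instead relate each top to the maximal tiles $\TM_{n,k}$, for which \eqref{eq:Mnk-counting} is available by construction --- which is precisely what the disjointness of the sets $\TB(\Tu)$ (each of cardinality $\geq 2^{j}$) accomplishes in the paper, yielding the overlap bound $\lesssim 2^{-j}2^{n}\log(n+1)$ for the tops and hence \eqref{eq:F-forest-counting}. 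Likewise, your appeal to Mirsky's theorem needs the chain-length bound, which in the paper is proved only for $\TH_{n,k}\setminus\TC_{n,k}$ via the contraction in \eqref{eq:normQ:parent} together with $\lambda\leq 2^{n}$; asserting $<$-depth $O(n)$ for your ``clusters'' without such an argument is unsubstantiated.
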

\begin{proof}
We would like to avoid the $\lambda$-dilates in Definition~\ref{def:mass}.
To this end, we consider the down subset of $\TP_{k}$
\[
\TC_{n,k} := \Set{ \Tp\in\TP_{k} \given \exists \Tm\in\TM_{n,k} : 2\Tp < 100\Tm }.
\]
We claim that the remaining set of tiles $\TH_{n,k}\setminus\TC_{n,k}$ can be partitioned into at most $n$ antichains.
Indeed, otherwise there exists a chain $\Tp_{0}<\dotsb<\Tp_{n}$ inside $\TH_{n,k}\setminus\TC_{n,k}$.
By definition \eqref{eq:densk}, there exists $\lambda\geq 2$ and a tile $\Tp'\in\TP_{k}$ such that $\lambda\Tp_{n} \leq \lambda\Tp'$ and
\begin{equation}
\label{eq:largeElambdaTp'}
\abs{\mE(\lambda \Tp')}/\abs{I_{\Tp'}} > C_{0} 2^{-n} \lambda^{\dim\calQ}.
\end{equation}
By the John ellipsoid theorem applied to the unit ball of the norm $\norm{\cdot}_{I_{\Tp'}}$, the set $\calQ(\lambda \Tp')$ can be covered by $O(\lambda^{\dim\calQ})$ uncertainty regions of the form $\calQ(\Tp'')$, where $\Tp'' \in \TP_{k}$ are tiles with $I_{\Tp''}=I_{\Tp'}$ and $\norm{Q_{\Tp'}-Q_{\Tp''}}_{I_{\Tp'}} \leq \lambda+1$.
For at least one such tile, we have $\abs{\mE(\Tp'')} \gtrsim C_{0} 2^{-n} \abs{I_{\Tp''}}$, so that $\abs{\mE(\Tp'')} > 2^{-n} \abs{I_{\Tp''}}$ provided that $C_{0}$ in \eqref{eq:heavy} is sufficiently large.
By definition \eqref{eq:Mnk}, there exists $\Tm \in \TM_{n,k}$ with $\Tp'' \leq \Tm$.

From \eqref{eq:largeElambdaTp'}, we obtain
\[
\lambda \leq \lambda^{\dim\calQ} < 2^{n} \abs{\mE(\lambda \Tp')}/\abs{I_{\Tp'}} \leq 2^{n},
\]
and it follows from \eqref{eq:normQ:parent} that, for all $Q\in \calQ(100\Tm)$, we have
\begin{align*}
\norm{Q_{\Tp_{0}}-Q}_{I_{\Tp_{0}}}
&\leq
\norm{Q_{\Tp_{0}}-Q_{\Tp_{n}}}_{I_{\Tp_{0}}}
+
\norm{Q_{\Tp_{n}}-Q_{\Tp'}}_{I_{\Tp_{0}}}
+
\norm{Q_{\Tp'}-Q_{\Tp''}}_{I_{\Tp_{0}}}\\
&\quad+
\norm{Q_{\Tp''}-Q_{\Tm}}_{I_{\Tp_{0}}}
+
\norm{Q_{\Tm}-Q}_{I_{\Tp_{0}}}\\
&\leq
1
+
10^{-4n}( \norm{Q_{\Tp_{n}}-Q_{\Tp'}}_{I_{\Tp_{n}}}
+
\norm{Q_{\Tp'}-Q_{\Tp''}}_{I_{\Tp'}}\\
&\quad+
\norm{Q_{\Tp''}-Q_{\Tm}}_{I_{\Tp''}}
+
\norm{Q_{\Tm}-Q}_{I_{\Tm}} )\\
&\leq
1 + 10^{-4n}( \lambda + (\lambda+1) + 1 + 100 )
\leq
2.
\end{align*}
Hence $2 \Tp_{0} \leq 100 \Tm$, contradicting the choice $\Tp_{0}\not\in\TC_{n,k}$.

We want to show that $\TC_{n,k}$ can be decomposed into $O(n)$ $L^{\infty}$-forests and $O(n^{2})$ antichains; then since $\TH_{n,k}$ is convex the same will hold for $\TH_{n,k}\cap\TC_{n,k}$.
Let
\[
\TB(\Tp):=\Set{ \Tm \in \TM_{n,k} \given 100 \Tp\leq \Tm},
\qquad \Tp\in\TC_{n,k}.
\]
In view of \eqref{eq:Mnk-counting}, we have $1\leq \abs{B(\Tp)} \lesssim 2^{n} \log(n+1)$ for every $\Tp\in\TC_{n,k}$.
Let
\[
\TC_{n,k,j}:=
\Set{ \Tp\in\TC_{n,k} \given 2^j\leq \abs{\TB(\Tp)} < 2^{j+1} }.
\]
For the remaining part of the proof, fix $j\geq 0$ such that $2^{j} \lesssim 2^{n} \log(n+1)$.
It suffices to show that $\TC_{n,k,j}$ can be written as the union of an $L^{\infty}$-forest and $O(n)$ antichains.

First we verify that the set $\TC_{n,k,j}$ is convex.
Indeed, if $\Tp_{1}<\Tp<\Tp_{2}$ with $\Tp_{1},\Tp_{2}\in\TC_{n,k,j}$ and $\Tp\in\TC_{n,k}$, then $100 \Tp_{1} < 100 \Tp < 100 \Tp_{2}$, so that $\TB(\Tp_{1}) \supseteq \TB(\Tp) \supseteq \TB(\Tp_{2})$, so that $\Tp\in\TC_{n,k,j}$.

Let $\TU\subseteq\TC_{n,k,j}$ be the set of tiles $\Tu$ such that there is no $\Tp\in\TC_{n,k,j}$ with $I_{\Tu} \subsetneq I_{\Tp}$ and $\calQ(100 \Tu) \cap \calQ(100 \Tp) \neq\emptyset$.
These are our candidates for being tree tops.

In order to verify the counting function estimate \eqref{eq:F-forest-counting}, we will show that for every $x\in\R^{\ds}$ the set $\TU(x) := \Set{ \Tu\in\TU \given x\in I_{\Tu}}$ has cardinality $O(2^{-j}2^{n}\log (n+1))$.
The family $\TU(x)$ can be subdivided into $O(1)$ families, denoted by $\TU'(x)$, in each of which the sets $\calQ(100\Tu)$, $\Tu\in\TU'(x)$, are disjoint (just make this decomposition at each scale independently).
In particular, the sets $\TB(\Tu)$, $\Tu\in\TU'(x)$, are pairwise disjoint.
These sets have cardinality at least $2^{j}$, and their union has cardinality at most $2^{n} \log(n+1)$ by \eqref{eq:Mnk-counting}.
This implies $\abs{\TU'(x)} \lesssim 2^{-j} 2^{n} \log(n+1)$.

Let
\[
\TD(\Tu) := \Set{\Tp\in \TC_{n,k,j} \given 2 \Tp < \Tu},
\quad
\Tu\in\TU.
\]
We will show that
\[
\TA'_{j} := \TC_{n,k,j} \setminus \cup_{\Tu\in\TU} \TD(\Tu)
\]
is an antichain.
Suppose that, on the contrary, there exist $\Tp, \Tp_1\in \TA'_{j}$ with $\Tp< \Tp_1$.
We claim that, in this case, for every $l=1,2,\dotsc$, there exists a sequence of tiles $\Tp_{1},\dotsc,\Tp_{l}\in \TC_{n,k,j}$ with
\[
2\Tp < 200\Tp_{1} < \dotsb < 200\Tp_{l}.
\]
This will produce a contradiction, because the spatial cubes of these tiles are in $\calC_{k}$, and therefore have bounded scale.
For $l=1$, the claim follows from \eqref{eq:normQ:parent}.
Suppose now that the claim is known for some $l\geq 1$.
If $\Tp_{l} \in \TU$, then $\Tp\in\TD(\Tp_{l})$, and this is a contradiction.
Otherwise, by definition of $\TU$, there exists a tile $\Tp_{l+1}\in\TC_{n,k,j}$ such that $I_{\Tp_{l}} \subsetneq I_{\Tp_{l+1}}$ and $\calQ(100 \Tp_{l}) \cap \calQ(100 \Tp_{l+1}) \neq \emptyset$.
It follows from \eqref{eq:normQ:parent} that $\calQ(200 \Tp_{l}) \supseteq \calQ(200 \Tp_{l+1})$, hence $200 \Tp_{l} < 200 \Tp_{l+1}$.
This finishes the proof of the claim and of the fact that $\TA'_{j}$ is an antichain.

Let $\TU' := \Set{ \Tu\in\TU \given \TD(\Tu) \neq\emptyset}$ and introduce on this set the relation
\begin{equation}
\label{eq:propto:def}
\Tu \propto \Tu'
:\iff
\exists \Tp\in \TD(\Tu) \text{ with } 10 \Tp\leq \Tu'.
\end{equation}
We claim that
\begin{equation}
\label{eq:propto-Fef-trick}
\Tu\propto\Tu' \implies I_{\Tu} = I_{\Tu'} \text{ and } \calQ(100 \Tu)\cap \calQ(100 \Tu')\neq\emptyset.
\end{equation}
\begin{proof}[Proof of the claim \eqref{eq:propto-Fef-trick}]
Let $\Tu,\Tu'\in\TU'$ with $\Tu\propto \Tu'$.
By definition, there exists $\Tp\in \TC_{n,k,j}$ with $2\Tp < \Tu$ and $10 \Tp\leq \Tu'$.

First we notice that it suffices to show that
\begin{equation}
\label{eq:propto-Fef-trick:100}
\calQ(100 \Tu)\cap \calQ(100 \Tu')\neq\emptyset.
\end{equation}
Indeed, the spatial cubes $I_{\Tu},I_{\Tu'}$ both contain $I_{\Tp}$, so, unless they coincide, they are strictly nested, contradicting $\Tu,\Tu'\in\TU$.

Now we make a case distinction.
If $I_{\Tp} = I_{\Tu'}$, then $100 \Tu' \leq 2\Tp < \Tu$, and \eqref{eq:propto-Fef-trick:100} follows.

In the case $I_{\Tp} \subsetneq I_{\Tu'}$, we deduce from \eqref{eq:normQ:parent} that $100 \Tp < 100 \Tu'$ and $100 \Tp < 100 \Tu$.
If \eqref{eq:propto-Fef-trick:100} does not hold, then the sets $\TB(\Tu)$ and $\TB(\Tu')$ are disjoint.
On the other hand, $\TB(\Tp) \supseteq \TB(\Tu) \cup \TB(\Tu')$, so that $\abs{\TB(\Tp)} \geq \abs{\TB(\Tu)} + \abs{\TB(\Tu')} \geq 2\cdot 2^{j}$, a contradiction to $\Tp\in\TC_{n,k,j}$.
This establishes \eqref{eq:propto-Fef-trick:100}.%
\footnote{This counting argument is due to C.~Fefferman \cite[p.~569]{MR0340926}.}
\end{proof}

Next, we verify that ``$\propto$'' is an equivalence relation.
Let $\Tu,\Tu',\Tu'' \in \TU'$ be such that $I_{\Tu}=I_{\Tu'}=I_{\Tu''}$, $\calQ(100\Tu) \cap \calQ(100\Tu') \neq \emptyset$, and $\calQ(100\Tu')\cap\calQ(100\Tu'') \neq \emptyset$.
For all, and since $\TD(\Tu)\neq\emptyset$ in particular for some, $\Tp\in \TD(\Tu)$ we have $2 \Tp < \Tu$.
By \eqref{eq:normQ:parent} this implies $4 \Tp < 1000 \Tu$, and it follows that
\begin{equation}
\label{eq:propto-equiv-rel}
4\Tp < \Tu''.
\end{equation}
Using \eqref{eq:propto-Fef-trick} and the fact that \eqref{eq:propto-equiv-rel} implies $\Tu \propto \Tu''$, we deduce transitivity, symmetry, and reflexivity of the relation ``$\propto$''.

Let $\TV\subseteq\TU'$ be a set of representatives for equivalence classes modulo $\propto$, and let
\[
\TT(\Tv) := \cup_{\Tu \propto \Tv} \TD(\Tu),
\quad
\Tv\in\TV.
\]
Each $\TT(\Tv)$ is a union of down subsets $\TD(\Tu) \subset \TC_{n,k,j}$, and therefore convex.
It follows from \eqref{eq:propto-equiv-rel} that each $\TT(\Tv)$ is a tree with top $\Tv$.
It follows from \eqref{eq:propto:def} that these trees satisfy the separation condition
\begin{equation}\label{eq:10sep}
\forall \Tv\neq \Tv' \quad \forall \Tp\in\TT(\Tv) \qquad 10\Tp\not\leq \Tv'.
\end{equation}

In order to upgrade the condition \eqref{eq:10sep} to $2^{C n}$-separateness, it suffices to remove the bottom $O(n)$ layers of tiles.%
\footnote{In order to perform this step, C.~Fefferman used tiles with ``central'' frequency intervals, see \cite[Section 5]{MR0340926}.
In order to avoid this restriction and the associated averaging argument, V.~Lie has introduced a separation condition similar to \eqref{eq:10sep}, see \cite[Proposition 2, hypothesis 2]{MR2545246}.}
More precisely, for $l=1,\dotsc,Cn$, let $\TA_{n,k,j,l}$ be the set of minimal tiles in $\cup_{\Tv\in\TV} \TT(\Tv) \setminus \cup_{l'<l} \TA_{n,k,j,l'}$.
Then each $\TA_{n,k,j,l}$ is an antichain, and the total number of antichains $\TA_{n,k,j,l}$ for all $j,l$ is $O(n^{2})$.
Each $\TT'(\Tv) := \TT(\Tv) \setminus \cup_{l} \TA_{n,k,j,l}$ is still a convex set, hence a tree with top $\Tv$.
Moreover, it follows from \eqref{eq:10sep} that tiles in distinct trees $\TT(\Tv)$ are not comparable.
Therefore, for every $\Tp\in\TT'(\Tv)$, there exist tiles $\Tp_{1}<\dotsb<\Tp_{Cn}<\Tp$ in $\TT(\Tv)$.
If $I_{\Tp} \subseteq I_{\Tv'}$ for some $\Tv'\neq\Tv$, then, using \eqref{eq:normQ:parent} and \eqref{eq:10sep} for the tile $\Tp_{1}$, we obtain
\[
\norm{Q_{\Tp}-Q_{\Tv'}}_{I_{\Tp}}
\geq
(10^{4})^{Cn} \norm{Q_{\Tp}-Q_{\Tv'}}_{I_{\Tp_{1}}}
\geq
(10^{4})^{Cn} \cdot 9,
\]
and this implies $10^{4Cn}$-separateness.
\end{proof}

The trees supplied by Proposition~\ref{prop:fef-forest} at different levels $n$ need not be disjoint.
We will now make them disjoint.
Let $\TT_{n,k,j,l}'$ be the trees and $\TA_{n,k,j}'$ the antichains provided by Proposition~\ref{prop:fef-forest} at level $n\geq 1$ and generation $k$.
For $n=1$, define
\[
\TT_{n,k,j,l} := \TT_{n,k,j,l}',
\quad
\TA_{n,k,j} := \TA_{n,k,j}'.
\]
For $n>1$, define
\[
\TT_{n,k,j,l} := \TT_{n,k,j,l}' \setminus \TH_{n-1,k},
\quad
\TA_{n,k,j} := \TA_{n,k,j}' \setminus \TH_{n-1,k}.
\]
Since we remove down subsets, the sets $\TT_{n,k,j,l}$ are still (convex) trees.

These sets have the following properties.
\begin{enumerate}
\item The set of all tiles can be decomposed as the disjoint union
\begin{equation}\label{eq:tree-dec}
\TP = \bigcup_{n=1}^{\infty} \bigcup_{k\in\N} \big( \bigcup_{j\lesssim n} \bigcup_{l} \TT_{n,k,j,l} \cup \bigcup_{j\lesssim n^{2}} \TA_{n,k,j}\big).
\end{equation}
\item Each $\TA_{n,k,j}$ is an antichain.
\item\label{it:TT-convex} Each $\TT_{n,k,j,l}$ is a tree.
\item Each $\TF_{n,k,j} := \cup_{l} \TT_{n,k,j,l}$ is an $L^{\infty}$-forest of level $n$ and generation $k$.
\item $\mdens_{k}(\TF_{n,k,j}) \lesssim 2^{-n}$.
\item $\mdens_{k}(\TA_{n,k,j}) \lesssim 2^{-n}$.
\end{enumerate}

\section{Estimates for error terms}
In this section, we consider error terms coming from antichains and boundary parts of trees.
These terms are morally easier to handle than the main terms, in the sense that they are controlled by positive operators (after a suitable $TT^{*}$ argument).

\subsection{The basic \texorpdfstring{$TT^{*}$}{TT*} argument}
\begin{lemma}\label{lem:sep-tiles}
Let $\Tp_1, \Tp_2\in\TP$ with $\meas{I_{\Tp_{1}}} \leq \meas{I_{\Tp_{2}}}$.
Then
\begin{equation}\label{eq:sep-tiles}
\abs[\Big]{\int T_{\Tp_1}^{*}g_{1} \overline{T_{\Tp_2}^{*}g_{2}}}
\lesssim
\frac{\Delta(\Tp_1,Q_{\Tp_2})^{-\frac{\tau}{d}}}{\abs{I_{\Tp_{2}}}}
\int_{E(\Tp_1)}\abs{g_{1}}\int_{E(\Tp_2)}\abs{g_{2}}.
\end{equation}
\end{lemma}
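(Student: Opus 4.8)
The plan is to expand the pairing, integrate out the variable $y$, and estimate the resulting kernel by a single-scale oscillatory integral bound.

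Write $s_i := \scale(\Tp_i)$; from $\meas{I_{\Tp_1}}\leq\meas{I_{\Tp_2}}$ we get $s_1\leq s_2$. Expanding the definition \eqref{eq:Ttile*} of $T_{\Tp_i}^{*}$ and using Fubini (all integrands are bounded with compact support), one obtains
\[
\int T_{\Tp_1}^{*}g_1\,\overline{T_{\Tp_2}^{*}g_2}
= \int_{E(\Tp_1)}\int_{E(\Tp_2)} \omega(x,x')\,g_1(x)\overline{g_2(x')}\,K(x,x')\,\dif x'\,\dif x,
\]
where, after fixing representatives of the classes $Q_x,Q_{x'}\in\calQ$, the factor $\omega$ is unimodular and
\[
K(x,x') := \int e\big((Q_x-Q_{x'})(y)\big)\,\overline{\CZK_{s_1}(x,y)}\,\CZK_{s_2}(x',y)\,\dif y
\]
(a change of representatives only multiplies $\omega$ and $K$ by compensating unimodular constants). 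Taking absolute values, it suffices to prove
\[
\abs{K(x,x')} \lesssim \abs{I_{\Tp_2}}^{-1}\,\Delta(\Tp_1,Q_{\Tp_2})^{-\tau/d}
\qquad\text{for } x\in E(\Tp_1),\ x'\in E(\Tp_2).
\]

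For the geometry: the integrand of $K(x,x')$ is supported in $\Set{y \given D^{s_1-1}/4 < \abs{x-y} < D^{s_1}/2}$, hence in a ball $B := B(x,R)$ with $R := D^{s_1}/2 \sim \ell(I_{\Tp_1})$, and $K(x,x')\neq 0$ forces the existence of $y$ with in addition $\abs{x'-y} < D^{s_2}/2$, whence $\abs{x-x'} < D^{s_2}$; combined with $x\in I_{\Tp_1}$, $x'\in I_{\Tp_2}$ and $s_1\leq s_2$ this gives $I_{\Tp_1}\subseteq CI_{\Tp_2}$ for a dimensional constant $C$, so we may assume this inclusion. Setting $a(y):=\overline{\CZK_{s_1}(x,y)}\,\CZK_{s_2}(x',y)$, the size bound \eqref{eq:Ks-size} gives $\norm{a}_{\infty}\lesssim D^{-\ds s_1-\ds s_2}$, so $\norm{a}_{\infty}\abs{B}\lesssim D^{-\ds s_2}=\abs{I_{\Tp_2}}^{-1}$; in particular the trivial estimate already yields $\abs{K(x,x')}\lesssim\abs{I_{\Tp_2}}^{-1}$. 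The regularity bound \eqref{eq:Ks-reg}, applied in the second variable of each factor $\CZK_{s_i}$ and using $s_1\leq s_2$, gives $\abs{a(y)-a(y')}\lesssim(\abs{y-y'}/R)^{\tau}\norm{a}_{\infty}$. Feeding $a$ and the degree-$\leq d$ polynomial phase $Q_x-Q_{x'}$ into the single-scale van der Corput estimate (Lemma~\ref{lem:osc-int}) improves this to
\[
\abs{K(x,x')} \lesssim \abs{I_{\Tp_2}}^{-1}\,\big(1+\norm{Q_x-Q_{x'}}_{B}\big)^{-\tau/d}.
\]

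It remains to bound $1+\norm{Q_x-Q_{x'}}_{B}$ from below by $\Delta(\Tp_1,Q_{\Tp_2})$ up to constants. Since $B$ and $I_{\Tp_1}$ each contain $x$ and have size $\sim R$, Lemma~\ref{lem:normQ} yields $\norm{Q_x-Q_{x'}}_{B}\sim\norm{Q_x-Q_{x'}}_{I_{\Tp_1}}$. Now $x\in E(\Tp_1)$ and \eqref{eq:tile:ball} give $\norm{Q_x-Q_{\Tp_1}}_{I_{\Tp_1}}\leq 1$, while $x'\in E(\Tp_2)$ and \eqref{eq:tile:ball} give $\norm{Q_{x'}-Q_{\Tp_2}}_{I_{\Tp_2}}\leq 1$, hence $\norm{Q_{x'}-Q_{\Tp_2}}_{I_{\Tp_1}}\lesssim 1$ via $I_{\Tp_1}\subseteq CI_{\Tp_2}$ and Lemma~\ref{lem:normQ}. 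The triangle inequality then gives $\norm{Q_x-Q_{x'}}_{I_{\Tp_1}}\geq\norm{Q_{\Tp_1}-Q_{\Tp_2}}_{I_{\Tp_1}}-O(1)$, so $1+\norm{Q_x-Q_{x'}}_{I_{\Tp_1}}\gtrsim 1+\norm{Q_{\Tp_1}-Q_{\Tp_2}}_{I_{\Tp_1}}=\Delta(\Tp_1,Q_{\Tp_2})$ whenever $\norm{Q_{\Tp_1}-Q_{\Tp_2}}_{I_{\Tp_1}}$ exceeds a fixed constant; if it does not, then $\Delta(\Tp_1,Q_{\Tp_2})=O(1)$ and the trivial estimate $\abs{K}\lesssim\abs{I_{\Tp_2}}^{-1}$ already suffices. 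Combining the displayed bounds proves the estimate for $K$, hence \eqref{eq:sep-tiles}.

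The only substantive ingredient is Lemma~\ref{lem:osc-int}: van der Corput for degree-$\leq d$ polynomial phases accounts for the exponent $1/d$, and the H\"older exponent $\tau$ of $\CZK$ accounts for the $\tau$ (only regularity in the second variable of $\CZK_{s}$ is used here). The one bookkeeping point that needs care is extracting the inclusion $I_{\Tp_1}\subseteq CI_{\Tp_2}$ from the supports of the truncated kernels $\CZK_{s_i}$ rather than from any a priori nesting of $\Tp_1$ and $\Tp_2$; without it the quantity $\Delta(\Tp_1,Q_{\Tp_2})$ would not be comparable to $\norm{Q_x-Q_{x'}}_{I_{\Tp_1}}$, but in that regime $K\equiv 0$.
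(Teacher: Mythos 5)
Your proposal is correct and follows essentially the same route as the paper: expand the pairing via Fubini, bound the inner $y$-integral with the single-scale van der Corput estimate (Lemma~\ref{lem:osc-int}) at the scale of $\Tp_1$, and then lower-bound $\norm{Q_x-Q_{x'}}$ by $\Delta(\Tp_1,Q_{\Tp_2})-O(1)$ via the triangle inequality, \eqref{eq:tile:ball}, and Lemma~\ref{lem:normQ}, using that $I_{\Tp_1}$ sits inside a bounded dilate of $I_{\Tp_2}$ (the paper gets this from $I_{\Tp_1}^{*}\cap I_{\Tp_2}^{*}\neq\emptyset$ rather than from the kernel supports, which is the same point). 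One cosmetic remark: the displayed H\"older bound for the amplitude should read $\abs{a(y)-a(y')}\lesssim(\abs{y-y'}/R)^{\tau}D^{-\ds(s_1+s_2)}$ (the product rule controls the oscillation by the size bounds \eqref{eq:Ks-size} of the individual factors, not by $\norm{a}_{\infty}$), which is exactly the bound your subsequent estimate uses, so the argument is unaffected.
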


\begin{proof}
We may assume $I_{\Tp_{1}}^{*} \cap I_{\Tp_{2}}^{*} \neq \emptyset$, since otherwise the left-hand side of the conclusion vanishes.
Expanding the left-hand side of \eqref{eq:sep-tiles}, we obtain
\begin{multline*}
\abs[\Big]{\int
\int e(-Q_{x_{1}}(x_{1})+Q_{x_{1}}(y)) \overline{\CZK_{\scale(\Tp_{1})}(x_{1},y)} (\one_{E(\Tp_{1})}g_{1})(x_{1}) \dif x_{1}\\
\cdot \overline{\int e(-Q_{x_{2}}(x_{2})+Q_{x_{2}}(y)) \overline{\CZK_{\scale(\Tp_{2})}(x_{2},y)} (\one_{E(\Tp_{2})}g_{2})(x_{2}) \dif x_{2}} \dif y}\\
\leq
\int_{E(\Tp_{1})} \int_{E(\Tp_{2})} \abs[\Big]{\int
  e((Q_{x_{1}}-Q_{x_{2}})(y)-Q_{x_{1}}(x_{1})+Q_{x_{2}}(x_{2}))\\
  \cdot \overline{\CZK_{\scale(\Tp_{1})}(x_{1},y)} \CZK_{\scale(\Tp_{2})}(x_{2},y) \dif y}
\abs{g_{1}(x_{1}) g_{2}(x_{2})} \dif x_{2} \dif x_{1}.
\end{multline*}
By Lemma~\ref{lem:osc-int} applied to the cube $I_{\Tp_{1}}^{*}$, the integral inside the absolute value is bounded by
\[
(\norm{Q_{x_{1}}-Q_{x_{2}}}_{I_{\Tp_{1}}^{*}}+1)^{-\tau/d}/\abs{I_{\Tp_{2}}},
\]
and the conclusion follows, since
\begin{align*}
\norm{Q_{x_{1}}-Q_{x_{2}}}_{I_{\Tp_{1}}^{*}}
&\geq
\norm{Q_{\Tp_{1}}-Q_{\Tp_{2}}}_{I_{\Tp_{1}}^{*}}
-
\norm{Q_{\Tp_{1}}-Q_{x_{1}}}_{I_{\Tp_{1}}^{*}}
-
\norm{Q_{\Tp_{2}}-Q_{x_{2}}}_{I_{\Tp_{1}}^{*}}\\
&\geq
\norm{Q_{\Tp_{1}}-Q_{\Tp_{2}}}_{I_{\Tp_{1}}}
-
C\norm{Q_{\Tp_{1}}-Q_{x_{1}}}_{I_{\Tp_{1}}}
-
\norm{Q_{\Tp_{2}}-Q_{x_{2}}}_{C I_{\Tp_{2}}}\\
&\geq
\Delta(\Tp_{1},Q_{\Tp_{2}})-1-C
-
C \norm{Q_{\Tp_{2}}-Q_{x_{2}}}_{I_{\Tp_{2}}}\\
&\geq
\Delta(\Tp_{1},Q_{\Tp_{2}})-C.
\qedhere
\end{align*}
\end{proof}

\subsection{Antichains and boundary parts of trees}
A separate treatment of boundary parts of trees was introduced in \cite{arXiv:1105.4504} and allows to preserve the sharp spatial support of adjoint tree operators $T_{\TT}^{*}$ throughout the main argument in Section~\ref{sec:main-est}, while avoiding exceptional sets in \cite{MR0340926}.
\begin{lemma}
\label{lem:antichain-supp}
There exists $\epsilon=\epsilon(d,\ds)>0$ such that, for every $0\leq\eta\leq 1$, every $1\leq\rho\leq\infty$, every antichain $\TA\subseteq \TP_{k}$, and every $Q\in\calQ$, we have
\begin{equation}\label{eq:antichain-supp}
\norm{ \sum_{\Tp\in \TA} \Delta(\Tp,Q)^{-\eta} \one_{E(\Tp)} }_{\rho}
\lesssim
\mdens_{k}(\TA)^{\epsilon \eta/\rho} \abs[\big]{\cup_{\Tp\in\TA} I_{\Tp}}^{1/\rho}.
\end{equation}
\end{lemma}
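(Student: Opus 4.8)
The plan is to prove this by a $TT^{*}$ (almost-orthogonality) argument, reducing the $\rho$-norm estimate to the case $\rho = \infty$ and the case $\rho = 2$, and then interpolating. The key point is that on an antichain the tile supports $E(\Tp)$ have a near-disjointness property, which is quantified by Lemma~\ref{lem:sep-tiles} together with the density bound.

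First I would handle the endpoint $\rho = \infty$. Here one must show that $\sum_{\Tp \in \TA} \Delta(\Tp,Q)^{-\eta} \one_{E(\Tp)}(x)$ is bounded by $\mdens_{k}(\TA)^{0} \cdot 1$ up to constants, i.e.\ is simply $O(1)$ pointwise (the power of the density is $\epsilon\eta/\rho \to 0$). Fix $x$; the tiles $\Tp \in \TA$ with $x \in E(\Tp)$ all satisfy $Q_{x} \in \calQ(\Tp)$, and since $\TA$ is an antichain, the uncertainty regions $\calQ(\Tp)$ of tiles at a common scale form a partition (part~\ref{def:tile:cover} of Lemma~\ref{lem:tile}) and tiles at different scales through $x$ are $\leq$-comparable, contradicting the antichain property unless their spatial cubes are distinct at each scale --- so there is at most one tile per scale. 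For such a chain of scales, $\Delta(\Tp,Q) = \norm{Q_{\Tp}-Q}_{I_{\Tp}}+1$ grows geometrically in the scale by Corollary~\ref{cor:normQ} (each step up multiplies the $I$-norm by $\geq 10^{4}$, and one checks $\norm{Q_{\Tp}-Q}_{I_{\Tp}}$ is comparable to $\norm{Q_{\Tp'}-Q}_{I_{\Tp'}}$ for nested tiles with overlapping uncertainty regions), so $\sum_{\Tp} \Delta(\Tp,Q)^{-\eta} \lesssim \sum_{m} (10^{4m})^{-\eta} \lesssim 1$ uniformly in $x$, $\eta \geq 0$, $Q$.

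Next I would treat $\rho = 2$. Write $h := \sum_{\Tp \in \TA} \Delta(\Tp,Q)^{-\eta} \one_{E(\Tp)}$ and expand $\norm{h}_{2}^{2} = \sum_{\Tp_{1},\Tp_{2}} \Delta(\Tp_{1},Q)^{-\eta}\Delta(\Tp_{2},Q)^{-\eta} \abs{E(\Tp_{1}) \cap E(\Tp_{2})}$. To estimate the cross terms I would use Lemma~\ref{lem:sep-tiles} with $g_{1} = \one_{E(\Tp_{1})}$, $g_{2} = \one_{E(\Tp_{2})}$, which (after noting $\abs{E(\Tp_{1})\cap E(\Tp_{2})} \lesssim \abs{\int T_{\Tp_{1}}^{*}g_{1}\, \overline{T_{\Tp_{2}}^{*}g_{2}}}$ is \emph{not} literally true --- rather one bounds $\abs{E(\Tp_1)\cap E(\Tp_2)}$ directly by the measure and uses the kernel decay) gives, for $\abs{I_{\Tp_{1}}} \leq \abs{I_{\Tp_{2}}}$ with overlapping doubled cubes, a bound $\lesssim \Delta(\Tp_{1},Q_{\Tp_{2}})^{-\tau/d} \abs{E(\Tp_{1})}$. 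Combined with $\Delta(\Tp_{1},Q) \lesssim \Delta(\Tp_{1},Q_{\Tp_{2}}) \Delta(\Tp_{2},Q)^{C}$ (triangle inequality plus \eqref{eq:normQ:parent} to transfer norms to the smaller cube) and summing the geometric series over the scales of $\Tp_{2} \supseteq \Tp_{1}$, the off-diagonal sum collapses to $\lesssim \sum_{\Tp_{1}} \Delta(\Tp_{1},Q)^{-\eta'} \abs{E(\Tp_{1})}$ for a slightly smaller $\eta' = \eta'(\eta,\tau,d)$, uniformly. Finally, for the resulting single sum I would use the definition of the maximal density: for each $\Tp \in \TA$, taking $\lambda = 2$ in \eqref{eq:densk} gives $\abs{\mE(\Tp)}/\abs{I_{\Tp}} \lesssim \mdens_{k}(\Tp) \leq \mdens_{k}(\TA)$, hence $\abs{E(\Tp)} \leq \abs{\mE(\Tp)} \lesssim \mdens_{k}(\TA)\abs{I_{\Tp}}$; combined with the pointwise bound $\sum_{\Tp} \Delta(\Tp,Q)^{-\eta'}\one_{I_{\Tp}} \lesssim 1$ (same geometric-series argument as at $\rho=\infty$, now on spatial cubes, which for an antichain again meet each point in at most one tile per scale) this yields $\norm{h}_{2}^{2} \lesssim \mdens_{k}(\TA)\, \abs{\cup_{\Tp} I_{\Tp}}$, which is \eqref{eq:antichain-supp} with $\rho=2$ and $\epsilon\eta = 1$ up to adjusting $\epsilon$ to absorb the loss $\eta \mapsto \eta'$.

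Interpolating the $\rho = \infty$ bound (no density gain) with the $\rho = 2$ bound ($\mdens_{k}(\TA)^{1/2}$ gain) via the fact that $h \geq 0$ and $\abs{\{h > 0\}} \leq \abs{\cup_{\Tp\in\TA}I_{\Tp}}$ --- more precisely, $\norm{h}_{\rho}^{\rho} \leq \norm{h}_{\infty}^{\rho-2}\norm{h}_{2}^{2}$ for $\rho \geq 2$, and a dual/direct argument for $1 \leq \rho < 2$ using $\norm{h}_{\rho} \leq \norm{h}_{2}^{2/\rho - 1} \cdots$ (or simply Hölder against $\one_{\{h>0\}}$) --- gives \eqref{eq:antichain-supp} for all $1 \leq \rho \leq \infty$ with $\epsilon$ depending only on $d,\ds$ (through $\tau/d$ and the $10^{4}$ in \eqref{eq:normQ:parent}). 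I expect the main obstacle to be the bookkeeping in the off-diagonal sum: one must carefully track how $\Delta(\Tp_{1},Q)$ compares to $\Delta(\Tp_{1},Q_{\Tp_{2}})$ and $\Delta(\Tp_{2},Q)$ when the cubes $I_{\Tp_{1}}^{*}$ and $I_{\Tp_{2}}^{*}$ merely overlap rather than nest, and verify that the power $\tau/d$ of kernel decay survives the summation over intermediate scales with enough room to spare to produce a genuine positive power of $\mdens_{k}(\TA)$ and of $\eta$.
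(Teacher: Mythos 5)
Your plan has two genuine gaps. First, at the endpoint $\rho=\infty$ your geometric-series bound $\sum_{m}(10^{4m})^{-\eta}\lesssim 1$ is not uniform in $\eta$ and fails outright at $\eta=0$, which the lemma must cover (it is invoked with $\eta=0$ to obtain the Carleson packing condition \eqref{eq:sparse-Lie-tree-Carl}); moreover the claimed geometric growth of $\Delta(\Tp,Q)$ along scales is unjustified for a general $Q$. What actually makes $\rho=\infty$ trivial is that the sets $E(\Tp)$, $\Tp\in\TA$, are pairwise disjoint: if $x\in E(\Tp_1)\cap E(\Tp_2)$ then $I_{\Tp_1}$ and $I_{\Tp_2}$ are nested, $Q_x$ lies in both uncertainty regions, and part~\ref{def:tile:nested} of Lemma~\ref{lem:tile} forces $\Tp_1$ and $\Tp_2$ to be comparable, contradicting the antichain property unless $\Tp_1=\Tp_2$. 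So at most one term is nonzero at each point and the supremum is at most $1$; this is the paper's first step, and it also makes all the cross terms in your $TT^{*}$ expansion vanish identically, so Lemma~\ref{lem:sep-tiles} plays no role in this lemma.

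Second, and more seriously, the pointwise bound $\sum_{\Tp\in\TA}\Delta(\Tp,Q)^{-\eta'}\one_{I_{\Tp}}\lesssim 1$ on which your $\rho=2$ estimate rests is false. An antichain can contain arbitrarily many tiles sharing one spatial cube $I$ (they are pairwise incomparable because their uncertainty regions are disjoint), and the number of such tiles with $\Delta(\Tp,Q)\sim\lambda$ is of order $\lambda^{\dim\calQ}$, so the sum of $\Delta(\Tp,Q)^{-\eta'}$ over them diverges for $\eta'\leq 1<\dim\calQ$; nested cubes cause a further loss. This is exactly the difficulty the paper's proof is built around: after reducing to $\rho=1$ by H\"older (using the $L^{\infty}$ bound), one splits at the threshold $\Delta(\Tp,Q)\geq\delta^{-\epsilon}$ with $\delta=\mdens_{k}(\TA)$ --- for those tiles disjointness of the $E(\Tp)$ alone yields the factor $\delta^{\epsilon\eta}$ --- while for the remaining tiles one covers their spatial cubes by maximal cubes $L$ and observes that all their $E$-sets meeting $L$ are contained in $\mE(\lambda\Tp_{L}')$ for a single tile $\Tp_{L}'$ with $I_{\Tp_{L}'}=\hat{L}$ and $\lambda=C\delta^{-\epsilon}$, so the density definition \eqref{eq:densk} gives a bound $\lesssim\lambda^{\dim\calQ}\delta\abs{L}=\delta^{1-\epsilon\dim\calQ}\abs{L}$, and the choice $\epsilon=1/(\dim\calQ+1)$ closes the argument. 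Your proposal has no substitute for this grouping step, and without it no positive power of the density can be extracted.
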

\begin{proof}
Since the sets $E(\Tp)$, $\Tp\in\TA$, are disjoint, the claimed estimate clearly holds for $\rho=\infty$.
Hence, by H\"older's inequality, it suffices to consider $\rho=1$.
Let also $\delta=\mdens_{k}(\TA)$.
We have to show
\[
\sum_{\Tp\in\TA} \Delta(\Tp,Q)^{-\eta} \abs{E(\Tp)}
\lesssim
\delta^{\eta\epsilon} \abs{S},
\quad
S=\cup_{\Tp\in\TA} I_{\Tp}.
\]
Let $\epsilon>0$ be a small number to be chosen later and split the summation in two parts.
For those $\Tp\in\TA$ with $\Delta(\Tp,Q) \geq \delta^{-\epsilon}$, the estimate clearly holds, because the sets $E(\Tp)$ are pairwise disjoint.

Let $\TA' = \Set{\Tp\in\TA \given \Delta(\Tp,Q) < \delta^{-\epsilon}}$, and consider the collection $\calL$ of the maximal grid cubes $L\in\calD$ such that $L\subsetneq I_{\Tp}$ for some $\Tp\in\TA'$ and $I_{\Tp} \not\subseteq L$ for all $\Tp\in\TA'$.
The collection $\calL$ is a disjoint cover of the set $\cup_{\Tp\in\TA'} I_{\Tp}$.
Fix $L\in\calL$; we will show that
\[
\sum_{\Tp\in\TA'} \abs{E(\Tp) \cap L}
\lesssim
\delta^{1-\epsilon \dim\calQ} \abs{L}.
\]
The conclusion will follow with $\epsilon = 1/(\dim\calQ+1)$.

By construction, $\hat{L} \in \calC_{k}$ and there exists a tile $\Tp_{L}\in\TA'$ with $I_{\Tp_{L}}\subseteq\hat{L}$.
If $I_{\Tp_{L}}=\hat{L}$, let $\Tp_{L}':=\Tp_{L}$, otherwise let $\Tp_{L}'$ be the unique tile with $I_{\Tp_{L}'} = \hat{L}$ and $Q\in\calQ(\Tp_{L}')$.
In both cases, with $\lambda = C \delta^{-\epsilon}$ for a sufficiently large constant $C$, the tile $\Tp_{L}'$ satisfies
\begin{enumerate}
\item $\lambda\Tp_{L} \leq \lambda\Tp_{L}'$ and
\item for every $\Tp\in\TA'$ with $L\cap I_{\Tp} \neq \emptyset$, we have $\lambda\Tp_{L}' \leq \Tp$.
\end{enumerate}
In view of disjointness of $E(\Tp)$'s, this implies
\[
\sum_{\Tp\in\TA'} \abs{E(\Tp) \cap L}
\leq
\abs{\mE(\lambda\Tp_{L}')}
\leq
\lambda^{\dim\calQ} \abs{I_{\Tp_{L}'}} \mdens_{k}(\Tp_{L})
\lesssim
\delta^{1-\epsilon \dim\calQ} \abs{L}.
\qedhere
\]
\end{proof}

For a tree $\TT$, the \emph{boundary component} is defined by
\begin{equation}
\label{eq:T-boundary}
\bd(\TT) := \Set{ \Tp\in\TT \given I_{\Tp}^{*} \not\subseteq I_{\TT}}.
\end{equation}
Notice that $\bd(\TT)$ is an up-set: if $\Tp\in\bd(\TT)$, $\Tp'\in\TT$, $\Tp \leq \Tp'$, then $I_{\Tp'}^{*} \supseteq I_{\Tp}^{*}$, so that also $\Tp'\in\bd(\TT)$.
In particular, $\TT \setminus \bd(\TT)$ is still a (convex) tree.

\begin{proposition}\label{prop:sf}
Fix $n,j$, and let either $\TS = \cup_{k}\cup_{l} \bd(\TT_{n,k,j,l})$ or $\TS = \cup_{k}\TA_{n,k,j}$.
Then
\begin{equation}
\label{eq:sf}
\norm{T_{\TS}}_{2 \to 2}
\lesssim
2^{-\epsilon n}.
\end{equation}
\end{proposition}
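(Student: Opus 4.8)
The plan is to run a $TT^{*}$ argument that reduces the claim to an estimate for a positive operator, into which the density bound $\mdens_{k}(\TS)\lesssim2^{-n}$ is then fed through Lemma~\ref{lem:antichain-supp}. Write $\TS=\bigcup_{k}\TS_{k}$ with $\TS_{k}=\TS\cap\TP_{k}$: in the antichain case $\TS_{k}=\TA_{n,k,j}$ is a single antichain, and in the boundary case $\TS_{k}=\bigcup_{l}\bd(\TT_{n,k,j,l})$ collects the boundary components of the trees of the Fefferman forest $\TF_{n,k,j}$. Since $\norm{T_{\TS}}_{2\to2}^{2}=\sup_{\norm{g}_{2}\le 1}\norm{T_{\TS}^{*}g}_{2}^{2}$ and $\norm{T_{\TS}^{*}g}_{2}^{2}=\sum_{\Tp_{1},\Tp_{2}\in\TS}\int T_{\Tp_{1}}^{*}g\,\overline{T_{\Tp_{2}}^{*}g}$, the first step is to bound each summand by Lemma~\ref{lem:sep-tiles}: assuming $\meas{I_{\Tp_{1}}}\le\meas{I_{\Tp_{2}}}$, and doubling to cover the symmetric range, one gets a bound of $\Delta(\Tp_{1},Q_{\Tp_{2}})^{-\tau/d}\meas{I_{\Tp_{2}}}^{-1}\int_{E(\Tp_{1})}\abs{g}\int_{E(\Tp_{2})}\abs{g}$, which already carries all the oscillatory decay that the van der Corput estimate Lemma~\ref{lem:osc-int} provides.

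Next I would sum over the larger tile $\Tp_{2}$ grouped by its spatial cube $J$. Within a single stopping generation the sets $E(\Tp)$, $\Tp\in\TS_{k}$, are pairwise disjoint, so $\sum_{\Tp_{2}\in\TS,\ I_{\Tp_{2}}=J}\int_{E(\Tp_{2})}\abs{g}\le\int_{J}\abs{g}$; moreover, since the uncertainty region of a tile has radius at most $1$ even in the smaller norm $\norm{\cdot}_{I_{\Tp_{1}}}$, one has $\Delta(\Tp_{1},Q_{\Tp_{2}})^{-\tau/d}\lesssim(\norm{Q_{\Tp_{1}}-Q_{x}}_{I_{\Tp_{1}}}+1)^{-\tau/d}$ for $x\in E(\Tp_{2})$, so the $\Tp_{2}$-at-$J$ contribution becomes $\meas{J}^{-1}\int_{J}(\norm{Q_{\Tp_{1}}-Q_{\cdot}}_{I_{\Tp_{1}}}+1)^{-\tau/d}\abs{g}$. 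The summation over the scales of $J$ and over $k$ is then forced to converge using the geometric decay of the frequency norms along towers of $D$-adic cubes (Corollary~\ref{cor:normQ}) together with the geometric shrinking of stopping generations \eqref{eq:Lie-support-decay}; the ``surrounding'' property \eqref{eq:tildeCk-Whitney} is what keeps the contribution of cubes straddling several generations tractable, essentially by permitting a Schur-type test with a weight that grows with the depth of the stopping generation.

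The remaining sum over the smaller tile $\Tp_{1}$ at a fixed frequency $Q$ is precisely the object $\sum_{\Tp_{1}\in\TS_{k}}\Delta(\Tp_{1},Q)^{-\tau/d}\one_{E(\Tp_{1})}$ estimated in Lemma~\ref{lem:antichain-supp}, and since $\mdens_{k}(\TS_{k})\lesssim2^{-n}$ by the construction of the forests and antichains this yields the gain $2^{-\epsilon n}$ in $L^{\rho}$ for any $\rho$; pairing with $g$ in $L^{2}$ by Hölder and summing over $k$ against the geometric decay gives \eqref{eq:sf}. In the boundary case one additionally exploits that the trees of a Fefferman forest are $2^{Cn}$-separated: a cross-tree pair $(\Tp_{1},\Tp_{2})$ with $\Tp_{1}\in\bd(\TT_{l})$, $I_{\Tp_{1}}\subseteq I_{\TT_{l'}}$, $l\neq l'$, picks up an extra factor $\Delta(\Tp_{1},Q_{\TT_{l'}})^{-\tau/d}\lesssim2^{-Cn\tau/d}$ which, for $C$ large, absorbs the forest counting function \eqref{eq:F-forest-counting}; pairs inside one tree are handled as for an antichain, using that $\bd(\TT)$ is an up-set whose spatial cubes hug the boundary of $I_{\TT}$.

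The hard part will be steps two and three together: organizing the sum over pairs of tiles, over scales, and over stopping generations so that the single density gain $2^{-\epsilon n}$ coming out of Lemma~\ref{lem:antichain-supp} is not lost while every geometric series in the scale and generation parameters converges. In particular one cannot bound $\int_{E(\Tp)}\abs{g}$ by $\int_{I_{\Tp}}\abs{g}$, nor estimate the spatial overlap $\norm{\sum_{\Tp\in\TS}\one_{I_{\Tp}}}_{\infty}$ directly, since these quantities are genuinely unbounded for antichains; the disjointness of the sets $E(\Tp)$ within a fixed generation, together with the Whitney-type stopping structure of Lemma~\ref{lem:spatial-decomposition}, must be used in their place.
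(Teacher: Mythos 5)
Your opening ($TT^{*}$ expansion plus Lemma~\ref{lem:sep-tiles}, then feeding the density through Lemma~\ref{lem:antichain-supp}) matches the paper, but the step you yourself flag as the hard part — organizing the double sum — is arranged in a direction that does not work, and the convergence mechanism you invoke for it is not real. You sum over the \emph{larger} tile $\Tp_{2}$ first, replacing $\sum_{\Tp_{2}:I_{\Tp_{2}}=J}\int_{E(\Tp_{2})}\abs{g}$ by $\int_{J}\abs{g}$, and then must sum $\meas{J}^{-1}\int_{J}(\norm{Q_{\Tp_{1}}-Q_{x}}_{I_{\Tp_{1}}}+1)^{-\tau/d}\abs{g}$ over the whole tower of cubes $J\supseteq I_{\Tp_{1}}$ (all scales, all earlier generations). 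There is no decay in that sum: the weight is measured in the \emph{fixed} small norm $\norm{\cdot}_{I_{\Tp_{1}}}$, and nothing prevents $\Delta(\Tp_{1},Q_{\Tp_{2}})\sim 1$ at every scale (e.g.\ all tiles whose uncertainty regions straddle one fixed polynomial form such a tower, and an antichain, respectively a tree boundary, can contain a full tower of this kind above a point). Corollary~\ref{cor:normQ} says that the norm of a fixed polynomial \emph{grows} when the cube grows; it gives no decay of $\Delta(\Tp_{1},Q_{\Tp_{2}})$ in the scale gap, so ``geometric decay of the frequency norms along towers'' cannot force the $J$-sum or the sum over earlier generations to converge. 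Likewise \eqref{eq:Lie-support-decay} and \eqref{eq:tildeCk-Whitney} control the measure of \emph{later} generations inside a fixed cube; they say nothing about how many larger cubes of earlier generations sit above a fixed point, which is what your arrangement would need.

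The paper's proof keeps the larger tile $\Tp'$ on the outside precisely to avoid this: the inner sum over the \emph{smaller} tiles $\TD(\Tp')$ is spatially confined to $5I_{\Tp'}$, is decomposed into antichains indexed by generation $k$ and boundary layer $m$, and the antichain lemma is combined with the support bounds \eqref{eq:Dp'-support}--\eqref{eq:Akm-support}, which supply the convergent factors $e^{k'-k}$ and $\min(1,C2^{n}\log(n+1)D^{-m})$; this yields the uniform bound $2^{-\epsilon n}n$ in \eqref{eq:sparse-Lie-tree-Carl-Delta}. The potentially divergent tower of larger tiles is then tamed not by any oscillatory decay but by the Carleson packing condition \eqref{eq:sparse-Lie-tree-Carl} for $\sum_{\Tp'}\one_{E(\Tp')}$ together with a stopping-time/sparse domination by $M_{q}$, giving $\sum_{\Tp'}\int_{E(\Tp')}\abs{g}(g)_{5I_{\Tp'},q}\lesssim n\norm{g}_{2}^{2}$. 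This second ingredient is entirely absent from your sketch, and without it (or an equivalent) your inner sum over $J$ is only bounded by the number of scales times $Mg$, which is unbounded. Two further, smaller points: the boundary sets $\bd(\TT_{n,k,j,l})$ are not antichains, so Lemma~\ref{lem:antichain-supp} can only be applied after the explicit layer-by-layer decomposition (your phrase ``handled as for an antichain'' hides exactly the $D^{-m}$ support decay that makes this legitimate); and the $2^{Cn}$-separation you invoke for cross-tree pairs is not needed in the paper's argument and in any case only exists within a single generation $k$, so it cannot substitute for the cross-generation decay $e^{k'-k}$.
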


\begin{proof}
For $\Tp\in\TP$, let $\gen(\Tp)$ denote the unique natural number such that $\Tp \in \TP_{\gen(\Tp)}$.
For $\Tp'\in\TS$, let
\[
\TD(\Tp') := \Set{ \Tp\in\TS \given
  \scale(\Tp) \leq \scale(\Tp') \land
  \gen(\Tp) \geq \gen(\Tp') \land
  I_{\Tp}^{*}\cap I_{\Tp'}^{*}\neq\emptyset}.
\]
Then $I_{\Tp} \subset 5 I_{\Tp'}$ for $\Tp\in\TD(\Tp')$.

We claim that, for every $\Tp,\Tp' \in \TS$ with $I_{\Tp}^{*}\cap I_{\Tp'}^{*}\neq\emptyset$, at least one of the relations $\Tp\in\TD(\Tp')$ or $\Tp'\in\TD(\Tp)$ holds.
Indeed, otherwise we may assume $\scale(\Tp') < \scale(\Tp)$ and $\gen(\Tp') < \gen(\Tp)$.
Then $I_{\Tp'} \subset 3 I_{\Tp}$, and, since $I_{\Tp} \in \tilde\calC_{\gen(\Tp)}$, it follows from \eqref{eq:tildeCk-Whitney} that $I_{\Tp'} \in \tilde\calC_{\gen(\Tp)}$.
But then $\gen(\Tp') \geq \gen(\Tp)$, a contradiction.

Using the above claim and Lemma~\ref{lem:sep-tiles}, we obtain
\begin{align*}
\int \abs[\big]{ T_{\TS}^{*}g }^2
&\leq
2 \sum_{\Tp'\in\TS} \sum_{\Tp\in\TD(\Tp')}
\abs[\Big]{\int T_{\Tp'}^{*}g \overline{T^{*}_{\Tp}g} }\\
&\lesssim
\sum_{\Tp'\in\TS}\int_{E(\Tp')}\abs{g} \sum_{\Tp\in \TD(\Tp')} \Delta(\Tp,Q_{\Tp'})^{-\tau/d} \frac{\int_{E(\Tp)}\abs{g}}{\abs{I_{\Tp'}}}.
\end{align*}
By H\"older's inequality with exponent $1<q<2$, this is
\begin{equation}
\label{eq:Vf}
\leq
\sum_{\Tp'\in\TS}\int_{E(\Tp')} \abs{g} \left(\frac{\int_{5I_{\Tp'}} \abs{g}^q}{\abs{I_{\Tp'}}}\right)^{\frac{1}{q}}
\frac{\norm{ \sum_{\Tp\in \TD(\Tp')} \Delta(\Tp,Q_{\Tp'})^{-\tau/d} \one_{E(\Tp)} }_{q'}}{\abs{I_{\Tp'}}^{\frac{1}{q'}}}.
\end{equation}

First we will show that the last fraction is $O(2^{-\epsilon n})$ uniformly in $\Tp'$.
Let $k' := \gen(\Tp')$, so that $\TD(\Tp') \subset \cup_{k\geq k'} \TP_{k}$.

We begin by estimating the spatial support of $\TD(\Tp') \cap \TP_{k}$.
If $F\in\calF_{k'+1}$ and $F\cap 5I_{\Tp'} \neq \emptyset$, then $s(F) \leq s(\Tp')$, since otherwise an ancestor of $I_{\Tp'}$ would have been included in $\calF_{k'+1}$ by part \eqref{it:stopping-neighbors} of Lemma~\ref{lem:spatial-decomposition}.
Therefore, by \eqref{eq:Lie-support-decay} for $k>k'$, we have
\begin{multline}
\label{eq:Dp'-support}
\meas[\big]{\bigcup_{F\in\calF_{k}} F \cap 5 I_{\Tp'}}
\lesssim
\meas[\big]{\bigcup_{F\in\calF_{k} : F\cap 5 I_{\Tp'} \neq \emptyset} F}\\
\lesssim
e^{k'-k}
\meas[\big]{\bigcup_{F\in\calF_{k'+1} : F\cap 5 I_{\Tp'} \neq \emptyset} F}
\lesssim
e^{k'-k} \meas{I_{\Tp'}},
\end{multline}
and the same estimate also clearly holds for $k=k'$.

Next, we decompose $\TD(\Tp')$ into antichains.
Consider first the case $\TS = \cup_{k,l} \bd(\TT_{n,k,j,l})$.
For $k\geq k'$ and $m\geq 0$, let
\[
\TA_{k,m} := \bigcup_{l} \Set{ \Tp \in \TD(\Tp') \cap \bd(\TT_{n,k,j,l}) \given \scale(\Tp) = s(k,l) - m },
\]
where
\[
s(k,l) :=
\begin{cases}
\min(s(\top \TT_{n,k',j,l}), s(\Tp')) & \text{if } k=k',\\
s(\top \TT_{n,k,j,l}) & \text{if } k>k'.
\end{cases}
\]
The sets $\TA_{k,m}$ are pairwise disjoint antichains and partition $\TD(\Tp') = \cup_{k\geq k', m\geq 0}\TA_{k,m}$.
We have
\begin{align*}
\abs[\Big]{\bigcup_{\Tp\in \TA_{k',m}} I_{\Tp}}
&\leq
\sum_{l} \meas[\Big]{5 I_{\Tp'} \cap \bigcup_{\substack{\Tp\in \bd(\TT_{n,k',j,l}) :\\ s(\Tp) = s(k',l) - m}} I_{\Tp}}\\
&\leq
\sum_{l} \meas[\Big]{5 I_{\Tp'} \cap \Set{x\in I_{\TT_{n,k',j,l}} \given \dist(x,\R^{\ds}\setminus I_{\TT_{n,k',j,l}}) < C D^{s(k',l) - m}} }\\
&\lesssim
D^{-m} \sum_{l} \meas[\Big]{5 I_{\Tp'} \cap I_{\TT_{n,k',j,l}}}\\
&\lesssim
D^{-m} 2^{n} \log(n+1) \meas{I_{\Tp'}},
\end{align*}
where we have used \eqref{eq:F-forest-counting} in the last step.
Analogously, using \eqref{eq:Dp'-support} for $k>k'$, we obtain
\begin{align*}
\abs[\Big]{\bigcup_{\Tp\in \TA_{k,m}} I_{\Tp}}
&\leq
\sum_{l : I_{\TT_{n,k,j,l}} \cap 5I_{\Tp'} \neq \emptyset} \meas[\Big]{\bigcup_{\substack{\Tp\in \bd(\TT_{n,k,j,l}) :\\ s(\Tp) = s(k,l) - m}} I_{\Tp}}\\
&\leq
\sum_{l : I_{\TT_{n,k,j,l}} \cap 5I_{\Tp'} \neq \emptyset}
\meas[\Big]{\Set{x\in I_{\TT_{n,k,j,l}} \given \dist(x,\R^{\ds}\setminus I_{\TT_{n,k,j,l}}) < C D^{s(k,l) - m}} }\\
&\lesssim
\sum_{l : I_{\TT_{n,k,j,l}} \cap 5I_{\Tp'} \neq \emptyset}
D^{-m} \meas[\Big]{I_{\TT_{n,k,j,l}}}\\
&\lesssim
D^{-m} \sum_{F\in\calF_{k} : F\cap 5 I_{\Tp'} \neq \emptyset} \meas{F} 2^{n} \log(n+1)\\
&\lesssim
e^{k'-k} D^{-m} 2^{n} \log(n+1) \meas{I_{\Tp'}}.
\end{align*}
Combining this with a trivial estimate coming from \eqref{eq:Dp'-support}, we obtain
\begin{equation}
\label{eq:Akm-support}
\abs[\Big]{\bigcup_{\Tp\in \TA_{k,m}} I_{\Tp}} \lesssim e^{k'-k} \min(1,C2^{n}\log(n+1) D^{-m}) \abs{I_{\Tp'}}.
\end{equation}
In the case $\TS = \cup_{k} \TA_{n,k,j}$, we define $\TA_{k,0} := \TA_{n,k,j} \cap \TD(\Tp')$ and $\TA_{k,m} := \emptyset$ for $m>0$.
The estimate \eqref{eq:Akm-support} also holds in this case.

Using Lemma~\ref{lem:antichain-supp} with $\rho=q'$ and $0\leq\eta\leq 1$ and \eqref{eq:Akm-support}, it follows that
\begin{multline}\label{eq:sparse-Lie-tree-Carl-Delta}
\frac{\norm{ \sum_{\Tp\in \TD(\Tp')} \Delta(\Tp,Q_{\Tp'})^{-\eta} \one_{E(\Tp)} }_{\rho}}{\abs{I_{\Tp'}}^{1/\rho}}
\leq
\sum_{k\geq k', m\geq 0} \frac{\norm{ \sum_{\Tp\in \TA_{k,m}} \Delta(\Tp,Q_{\Tp'})^{-\eta} \one_{E(\Tp)} }_{\rho}}{\abs{I_{\Tp'}}^{1/\rho}}\\
\lesssim
2^{-\epsilon\eta n/\rho} \sum_{k\geq k', m\geq 0} \frac{\abs{\cup_{\Tp\in \TA_{k,m}} I_{\Tp}}^{1/\rho}}{\abs{I_{\Tp'}}^{1/\rho}}\\
\lesssim
2^{-\epsilon\eta n/\rho} \sum_{k\geq k', m\geq 0} e^{(k'-k)/\rho} \min(1,C2^{n}\log(n+1) D^{-m})^{1/\rho}\\
\lesssim_{\rho}
2^{-\epsilon\eta n/\rho} n \sum_{k\geq k'} e^{(k'-k) / \rho}
\lesssim_{\rho}
2^{-\epsilon\eta n/\rho} n.
\end{multline}
Using the estimate \eqref{eq:sparse-Lie-tree-Carl-Delta} with $\eta=\tau/d$ in the last factor of \eqref{eq:Vf}, we obtain the claimed exponential decay in $n$.

In order to conclude, it now suffices to show
\[
\sum_{\Tp\in\TS} \int_{E(\Tp)} \abs{g} (g)_{5I_{\Tp},q}
\lesssim
n \norm{g}_2^{2},
\text{ where }
(g)_{5I,q} := (\abs{I}^{-1} \int_{5I} \abs{g}^{q} )^{1/q}.
\]
Similarly to the estimate \eqref{eq:sparse-Lie-tree-Carl-Delta} with $\eta=0$, we obtain the Carleson packing condition
\begin{equation}
\label{eq:sparse-Lie-tree-Carl}
\norm{ \sum_{\Tp\in\TS : I_{\Tp}\subseteq J}\one_{E(\Tp)} }_{\rho} \lesssim_{\rho} n \abs{J}^{1/\rho},
\quad 1\leq \rho<\infty.
\end{equation}
Let $\calS\subset\calD$ be the stopping time associated to the average $(g)_{5I,q}$, that is, $\ch_{\calS}(I)$ are the maximal cubes $J \subset I$ with $(g)_{5J,q} > C (g)_{5I,q}$ for some large constant $C$.
Since the $q$-maximal operator \eqref{eq:HL-q-max-op} has weak type $(q,q)$, the family $\calS$ is \emph{sparse} in the sense that there exist pairwise disjoint subsets $\calE(I) \subseteq I\in\calS$ with $\abs{\calE(I)} \gtrsim \abs{I}$ (one can take $\calE(I) = I \setminus \cup_{J\in\ch_{\calS}(I)} J$).
Then
\begin{align*}
\sum_{\Tp\in\TS} \int_{E(\Tp)} \abs{g}  (g)_{5I_{\Tp},q}
&\lesssim
\sum_{I\in\calS} (g)_{5I,q} \int \abs{g} \sum_{\Tp\in\TS, I_{\Tp}\subseteq I} \one_{E(\Tp)}\\
\text{by H\"older}&\leq
\sum_{I\in\calS} (g)_{5I,q} \abs{I} (g)_{I,q} \Big( \abs{I}^{-1} \int_{I} \big( \sum_{\Tp\in\TS, I_{\Tp}\subseteq I} \one_{E(\Tp)} \big)^{q'} \Big)^{1/q'}\\
\text{by \eqref{eq:sparse-Lie-tree-Carl} and sparseness}&\lesssim
n \sum_{I\in\calS} (g)_{5I,q} \abs{\calE(I)} (g)_{I,q}\\
\text{by disjointness}&\lesssim
n \int (M_{q}g)^{2}
\lesssim
n \norm{g}_{2}^{2},
\end{align*}
where we have used the strong type $(2,2)$ inequality for $M_{q}$, $q<2$, in the last step.
\end{proof}

\subsection{Localization}
In order to handle exponents $p\neq 2$, we localize the operator $T_{\TS}$.
\begin{proposition}
\label{prop:sf-loc}
Let $\TS$ be as in Proposition~\ref{prop:sf}.
Let $F,G \subseteq \R^{\ds}$ be such that
\begin{equation}
\label{eq:sf-loc:assume}
\meas{I_{\Tp} \cap G} \lesssim \nu \meas{I_{\Tp}}
\text{ and }
\meas{5I_{\Tp} \cap F} \lesssim \kappa \meas{I_{\Tp}}
\text{ for every }
\Tp\in\TS.
\end{equation}
Then, for every $0\leq \alpha < 1/2$, we have
\begin{equation}
\label{eq:sf-loc}
\norm{\one_{G} T_{\TS} \one_{F}}_{2 \to 2}
\lesssim_{\alpha}
\nu^{\alpha} \kappa^{\alpha} 2^{-\epsilon n}.
\end{equation}
\end{proposition}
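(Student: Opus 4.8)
The plan is to deduce \eqref{eq:sf-loc} from Proposition~\ref{prop:sf} by combining it with a crude localized bound. We may assume $\nu,\kappa\le1$, since otherwise \eqref{eq:sf-loc} is weaker than Proposition~\ref{prop:sf}. Fix an exponent $q$ with $\tfrac1{1-\alpha}<q<2$; this interval is nonempty precisely because $\alpha<1/2$, and then $\tfrac1{q'}=1-\tfrac1q>\alpha$ while $M_{q}$ is still bounded on $L^{2}(\R^{\ds})$.

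The crude bound I want is $\norm{\one_{G}T_{\TS}\one_{F}}_{2\to2}\lesssim_{\alpha}\kappa^{1/2}\,\nu^{1/q'}\,n$, and I would prove it along the lines of the proof of Proposition~\ref{prop:sf}. Writing $\norm{\one_{G}T_{\TS}\one_{F}}_{2\to2}=\norm{\one_{F}T_{\TS}^{*}\one_{G}}_{2\to2}$ and expanding $\int_{F}\abs{T_{\TS}^{*}(\one_{G}g)}^{2}$, the near-diagonal decomposition of the double sum over tiles via the families $\TD(\Tp')$ from Proposition~\ref{prop:sf} goes through verbatim, so it suffices to estimate each single pair. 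Here I would replace Lemma~\ref{lem:sep-tiles} by its trivial $F$-localized form: for $\Tp\in\TD(\Tp')$, using $\supp T_{\Tp}^{*}(\one_{G}g)\subseteq 2I_{\Tp}$, the bound $\abs{T_{\Tp}^{*}(\one_{G}g)}\lesssim\meas{I_{\Tp}}^{-1}\int_{E(\Tp)\cap G}\abs{g}$ coming from \eqref{eq:Ks-size}, and $\meas{2I_{\Tp}\cap F}\le\meas{5I_{\Tp}\cap F}\lesssim\kappa\meas{I_{\Tp}}$,
\[
\abs[\Big]{\int\one_{F}\,T_{\Tp}^{*}(\one_{G}g)\,\overline{T_{\Tp'}^{*}(\one_{G}g)}}
\lesssim
\frac{\kappa}{\meas{I_{\Tp'}}}\int_{E(\Tp)\cap G}\abs{g}\int_{E(\Tp')\cap G}\abs{g}.
\]
Running the rest of the proof of Proposition~\ref{prop:sf} with the sets $E(\Tp)\cap G$ in place of $E(\Tp)$, the only additional input needed is a $G$-localized Carleson packing,
\[
\norm[\big]{\sum_{\Tp\in\TS:\;I_{\Tp}\subseteq J}\one_{E(\Tp)\cap G}}_{\rho}\lesssim_{\rho}\nu^{1/\rho}\,n\,\meas{J}^{1/\rho},\qquad 1\le\rho<\infty,
\]
which follows from the proof of \eqref{eq:sparse-Lie-tree-Carl} together with the observation that for an antichain $\TA$ the set $\bigcup_{\Tp\in\TA}I_{\Tp}$ is a disjoint union of maximal cubes, each equal to $I_{\Tp_{i}}$ for some $\Tp_{i}\in\TA\subseteq\TS$ and hence with $\meas{I_{\Tp_{i}}\cap G}\lesssim\nu\meas{I_{\Tp_{i}}}$, so that $\sum_{\Tp\in\TA}\one_{E(\Tp)\cap G}\le\one_{G\cap\bigcup_{\Tp\in\TA}I_{\Tp}}$ by disjointness of the $E(\Tp)$. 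Summing the single-pair estimate over $\Tp\in\TD(\Tp')$ (Hölder with exponents $q,q'$, then the localized packing on the $O(1)$ grid cubes covering $5I_{\Tp'}$) and then over $\Tp'$ via the sparse/stopping-time argument with the strong $(2,2)$ bound for $M_{q}$, exactly as in Proposition~\ref{prop:sf} but using the localized packing once more, produces $\int_{F}\abs{T_{\TS}^{*}(\one_{G}g)}^{2}\lesssim_{\alpha}\kappa\,\nu^{2/q'}\,n^{2}\norm{g}_{2}^{2}$, which is the crude bound after taking square roots.

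Finally, $\norm{\one_{G}T_{\TS}\one_{F}}_{2\to2}$ is bounded by $2^{-\epsilon_{0}n}$ (Proposition~\ref{prop:sf}, with $\epsilon_{0}>0$) and by $C_{\alpha}\kappa^{1/2}\nu^{1/q'}n$ (the crude bound), hence for every $\theta\in[0,1]$ by $(2^{-\epsilon_{0}n})^{1-\theta}(C_{\alpha}\kappa^{1/2}\nu^{1/q'}n)^{\theta}=C_{\alpha}^{\theta}\kappa^{\theta/2}\nu^{\theta/q'}n^{\theta}2^{-\epsilon_{0}(1-\theta)n}$. Choosing $\theta:=\alpha q'$, which lies in $[0,1)$ because $\alpha<\tfrac1{q'}$ and satisfies $\theta\ge2\alpha$ because $q'\ge2$, and using $\kappa,\nu\le1$, one gets $\kappa^{\theta/2}\le\kappa^{\alpha}$ and $\nu^{\theta/q'}=\nu^{\alpha}$, while $n^{\theta}2^{-\epsilon_{0}(1-\theta)n}\lesssim_{\alpha}2^{-\epsilon n}$ with $\epsilon:=\epsilon_{0}(1-\theta)/2>0$; this is \eqref{eq:sf-loc}.

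The step I expect to be the main obstacle is extracting a useful power of $\nu$ from the $G$-localization: the hypothesis only controls $\meas{I_{\Tp}\cap G}$ for $\Tp\in\TS$, which is why the localized packing must be obtained by covering $\bigcup_{\Tp\in\TA}I_{\Tp}$ by cubes that are themselves spatial cubes of tiles in $\TS$, and why one can only extract $\nu^{1/q'}$ rather than a larger power in a single pass. Pushing this out to $\nu^{\alpha}$ then forces $\tfrac1{q'}>\alpha$, i.e.\ $q>\tfrac1{1-\alpha}$; together with the constraint $q<2$ (needed for the $M_{q}$ estimate on $L^{2}$) and $\theta=\alpha q'<1$, this is exactly what pins the admissible range to $\alpha<1/2$.
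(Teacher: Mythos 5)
Your proposal is correct, but it implements the key localized estimate differently from the paper, so a comparison is worthwhile. The paper does \emph{not} rerun the $TT^{*}$/antichain machinery of Proposition~\ref{prop:sf} for the localized bound: it reduces (by the geometric average with \eqref{eq:sf}) to showing $\norm{\one_{G}T_{\TS}\one_{F}}_{2\to2}\lesssim n\,\nu^{\alpha}\kappa^{\alpha}$ with $\alpha=1/q'$, and proves this by a purely positive argument --- the pointwise domination $\abs{T_{\TS}f}\lesssim\sum_{\Tp\in\TS}\one_{E(\Tp)}(f)_{5I_{\Tp},1}$, the $G$-localized Carleson packing \eqref{eq:sparse-Lie-tree-Carl-loc} (justified exactly by your antichain/maximal-cube observation), and a single sparse stopping time run over the tile cubes $\Set{I_{\Tp}\given\Tp\in\TS}$, where the hypothesis $\meas{5I_{\Tp}\cap F}\lesssim\kappa\meas{I_{\Tp}}$ enters through $(\one_{F})_{5I,q'}\lesssim\kappa^{1/q'}$; this yields the two gains symmetrically, $\kappa^{1/q'}\nu^{1/q'}$. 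You instead repeat the quadratic argument of Proposition~\ref{prop:sf}, replacing Lemma~\ref{lem:sep-tiles} by the trivial support bound (which correctly extracts a full factor $\kappa$ from $\meas{2I_{\Tp}\cap F}$, hence $\kappa^{1/2}$ after the square root) and using the localized packing twice, arriving at $n\,\kappa^{1/2}\nu^{1/q'}$; your checks that the near-diagonal decomposition, the antichain support bounds \eqref{eq:Akm-support}, and the final sparse pass are insensitive to the localization are all sound, and your explicit bookkeeping in the geometric average (the choice $\theta=\alpha q'$, with $\theta<1$ because $\alpha<1/q'$ and $\theta\geq2\alpha$ because $q'\geq2$) actually spells out a step the paper leaves terse, at the acceptable cost of an $\alpha$-dependent $\epsilon$, which the paper's own averaging also incurs. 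What your route buys is a better intermediate power of $\kappa$ (irrelevant after averaging) at the price of heavier machinery; the paper's route is lighter and treats $F$ and $G$ symmetrically. One small imprecision: your reduction to $\nu,\kappa\leq1$ is not quite that \eqref{eq:sf-loc} would otherwise be ``weaker than Proposition~\ref{prop:sf}'' (think of $\nu>1>\kappa$); the correct and equally trivial reason is that the hypothesis \eqref{eq:sf-loc:assume} holds automatically with $\nu$, respectively $\kappa$, replaced by $\min(\nu,1)$, respectively $\min(\kappa,1)$, since $\meas{I_{\Tp}\cap G}\leq\meas{I_{\Tp}}$ and $\meas{5I_{\Tp}\cap F}\leq5^{\ds}\meas{I_{\Tp}}$, and the conclusion at the capped parameters implies the stated one.
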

\begin{proof}
Taking a geometric average with \eqref{eq:sf}, it suffices to show
\[
\norm{\one_{G} T_{\TS} \one_{F}}_{2 \to 2}
\lesssim
n \nu^{\alpha} \kappa^{\alpha}.
\]
To this end, we replace \eqref{eq:antichain-supp} by the estimate
\[
\norm{ \sum_{\Tp\in \TA} \one_{E(\Tp) \cap G} }_{\rho}
\lesssim
\abs[\big]{\cup_{\Tp\in\TA} I_{\Tp} \cap G}^{1/\rho}
\lesssim
\nu^{1/\rho} \abs[\big]{\cup_{\Tp\in\TA} I_{\Tp}}^{1/\rho}
\]
for all antichains $\TA\subset\TS$.
Following the proof of the Carleson packing condition \eqref{eq:sparse-Lie-tree-Carl}, we obtain
\begin{equation}
\label{eq:sparse-Lie-tree-Carl-loc}
\norm{ \sum_{\Tp\in\TS : I_{\Tp}\subseteq J}\one_{E(\Tp) \cap G} }_{\rho}
\lesssim_{\rho}
n \nu^{1/\rho} \abs{J}^{1/\rho},
\quad 1\leq \rho<\infty.
\end{equation}
Fix functions $f,g$ with $\supp f \subset F$ and $\supp g\subset G$.
Consider the stopping time $\calS \subset \Set{I_{\Tp} \given \Tp\in\TS }$ associated to the average $(f)_{5I,1}$ and let $\calE(I) \subset I\in\calS$ be pairwise disjoint subsets with $\abs{\calE(I)} \gtrsim \abs{I}$.
With $\alpha=1/q'$, we obtain
\begin{align*}
\int \abs{g T_{\TS} f}
&\lesssim
\sum_{\Tp\in\TS} (f)_{5 I_{\Tp}, 1} \int_{E(\Tp)} \abs{g}\\
&\lesssim
\sum_{I\in\calS} (f)_{5I,1} \int \sum_{\Tp\in\TS, I_{\Tp}\subset I} \one_{E(\Tp)} \abs{g}\\
&\lesssim
\sum_{I\in\calS} (f)_{5I,q} (\one_{F})_{5I,q'} \meas{I} (g)_{I,q} \Big( \meas{I}^{-1} \int \big(\sum_{\Tp\in\TS, I_{\Tp}\subset I} \one_{E(\Tp) \cap G} \big)^{q'} \Big)^{1/q'}\\
&\lesssim
n \kappa^{1/q'} \nu^{1/q'} \sum_{I\in\calS} (f)_{5I,q} \meas{\calE(I)} (g)_{I,q}\\
&\lesssim
n \kappa^{1/q'} \nu^{1/q'} \int (M_{q}f) (M_{q}g)\\
&\lesssim
n \kappa^{1/q'} \nu^{1/q'} \norm{M_{q}f}_{2} \norm{M_{q}g}_{2}\\
&\lesssim
n \kappa^{1/q'} \nu^{1/q'} \norm{f}_{2} \norm{g}_{2}.
\qedhere
\end{align*}
\end{proof}

\section{Estimates for trees and forests}
\label{sec:main-est}
In this section, we consider the bulk of tiles that are organized into trees.
The contribution of each tree will be estimated by a maximally truncated operator associated to the kernel $\CZK$.

\subsection{Cotlar's inequality}
We call a subset $\sigma\subset\Z$ \emph{convex} if it is order convex, that is, $s_{1}<s<s_{2}$ and $s_{1},s_{2}\in\sigma$ implies $s\in\sigma$.
For a measurable function $\sigma$ that maps $\R^{\ds}$ to the set of finite convex subsets of $\Z$, we consider the associated truncated singular integral operator
\begin{equation}
\label{eq:T-trunc}
T_{\sigma}f(x) := \sum_{s\in\sigma(x)} \int \CZK_{s}(x,y) f(y) \dif y.
\end{equation}
An inspection of the proof of Cotlar's inequality, see e.g.~\cite[Section I.7.3]{MR1232192}, shows that the non-tangentially maximally truncated operator
\begin{equation}
\label{eq:nontang-max-trunc}
T_{\calN}f (x) := \sup_{\sigma}\sup_{\abs{x-x'} \leq C D^{\min\sigma(x)}} \abs{T_{\sigma}f(x')}
\end{equation}
is bounded on $L^{p}(\R^{\ds})$, $1<p<\infty$ (more precisely, the proof of Cotlar's inequality in the above reference shows that this holds if the constant $C$ is sufficiently small, see also \cite[Lemma 3.2]{MR3484688}; one can subsequently pass to larger values of $C$, see e.g.~\cite[Section II.2.5.1]{MR1232192}).
We refer to this fact as the non-tangential Cotlar inequality.

We will use truncated singular integral operators with sets of scales given by trees.
\begin{definition}
\label{def:T-scales}
For a tree $\TT$, we define
\begin{align*}
\sigma (\TT,x) &:=
\Set{\scale(\Tp) \given \Tp\in\TT, x\in E(\Tp) },\\
\smax (\TT, x) &:= \max \sigma(x),\\
\smin (\TT,x) &:= \min \sigma(x).
\end{align*}
\end{definition}
We will omit the argument $\TT$ if it is clear from the context.
By construction of the set of all tiles $\TP$, the set $\sigma (\TT,x)$ is convex in $\Z$ for every tree $\TT$ and every $x\in\R^{\ds}$.

\subsection{Tree estimate}
\begin{definition}
\label{def:leaves}
For a non-empty finite collection of tiles $\TS\subset\TP$,
\begin{enumerate}
\item let $\calJ(\TS) \subset \calD$ be the collection of the maximal grid cubes $J$ such that $100 D J$ does not contain $I_{\Tp}$ for any $\Tp\in\TS$, and
\item let $\calL(\TS) \subset \calD$ be the collection of the maximal grid cubes $L$ such that $L\subsetneq I_{\Tp}$ for some $\Tp\in\TS$ and $I_{\Tp} \not\subseteq L$ for all $\Tp\in\TS$.
\end{enumerate}

For a collection of pairwise disjoint grid cubes $\calJ\subset\calD$, we define the projection operator
\begin{equation}
\label{eq:positive-tree-proj}
P_{\calJ}f := \sum_{J\in\calJ} \one_{J} \meas{J}^{-1} \int_{J} f.
\end{equation}
\end{definition}
For later use, we note that the scales of adjacent cubes in $\calJ(\TS)$ differ at most by $1$, in the sense that if $J,J'\in\calJ$ and $\dist(J,J')\leq 10\max(\ell(J),\ell(J'))$, then $\abs{s(J)-s(J')} \leq 1$.
Indeed, if $J,J'\in\calJ$, $s(J) \leq s(J')-2$, and $\dist(J,J') \leq 10 \ell(J')$, then $100D\hat{J} \subset 100DJ'$ does not contain any $I_{\Tp}$, $\Tp\in\TS$, contradicting maximality of $J$.
\begin{lemma}[Tree estimate]\label{lem:tree}
Let $\TT\subseteq\TP$ be a tree, $\calJ := \calJ(\TT)$, and $\calL := \calL(\TT)$.
Then, for every $1 < p < \infty$, $f\in L^{p}(\R^{\ds})$, and $g\in L^{p'}(\R^{\ds})$, we have
\begin{equation}\label{eq:tree:proj}
\abs[\Big]{\int_{\R^{\ds}} g T_{\TT} f}
\lesssim
\norm{P_{\calJ} \abs{f}}_{p} \norm{P_{\calL} \abs{g}}_{p'}.
\end{equation}
\end{lemma}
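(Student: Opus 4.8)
The plan is to reduce the bilinear form $\int g\,T_{\TT}f$ to a sum over the tiles of the tree and then to organize that sum along the Whitney-type families $\calJ(\TT)$ and $\calL(\TT)$. First I would split the tree into its top-supported part $\TT\setminus\bd(\TT)$ and its boundary $\bd(\TT)$; the boundary part is an up-set and is already handled by Proposition~\ref{prop:sf} (indeed it is a sub-collection of $\bd(\TT_{n,k,j,l})$), so modulo a harmless abuse it suffices to treat the case $I_{\Tp}^{*}\subseteq I_{\TT}$ for all $\Tp\in\TT$. For such a tree the key point is that on the support of $T_{\Tp}f$ the linearizing polynomial $Q_{x}$ lies in $\calQ(\Tp)$, and since all tiles of the tree share the comparable central polynomial $Q_{\TT}$ (because $4\Tp<\Tp_{0}$), the phase $e(Q_{x}(x)-Q_{x}(y))$ can be written as $e(Q_{\TT}(x)-Q_{\TT}(y))$ times a factor that, together with $\CZK_{\scale(\Tp)}$, behaves like a bounded Calder\'on--Zygmund piece at scale $\scale(\Tp)$. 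Thus $T_{\TT}f$ is (pointwise on $E(\Tp)$) controlled by a non-tangentially maximally truncated singular integral of the form \eqref{eq:nontang-max-trunc} applied to $f$, composed with the linearization data $\sigma(\TT,x)$.

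The main steps, in order, are as follows. (1) Replace $f$ by $P_{\calJ}|f|$ where it is safe to do so: if $\Tp\in\TT$ then $100DJ\not\supseteq I_{\Tp}$ for $J\in\calJ$ forces $\ell(J)\gtrsim D^{\scale(\Tp)}$ once $J$ meets $I_{\Tp}^{*}$, so the kernel $\CZK_{\scale(\Tp)}(x,\cdot)$ is essentially constant at the scale of $J$; using the H\"older regularity \eqref{eq:Ks-reg} and the cancellation built into the CZ truncations one replaces the integral of $\CZK_{\scale(\Tp)}e(Q(\cdot))f$ against the actual $f$ by the same integral against $P_{\calJ}f$, up to an error controlled by $M f$ which in turn is dominated by $\norm{P_{\calJ}|f|}$ on the relevant region. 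Conversely $f$ may be assumed supported outside any $J\in\calJ$ whose $100DJ$ does contain some $I_{\Tp}$ — i.e. one reduces to $f=P_{\calJ}f$ supported on the ``bad'' cubes. (2) Dominate $\abs{\int g\,T_{\TT}f}$ by $\int (T_{\calN}(P_{\calJ}f))\cdot\abs{g}$ using the non-tangential Cotlar inequality: for $x\in E(\Tp)$ the partial sum $\sum_{s\in\sigma(\TT,x)}\int\CZK_{s}e(Q_{x}(x)-Q_{x}(y))f(y)\,dy$ equals $e(Q_{\TT}(x))$ times a truncated singular integral of $e(-Q_{\TT}(\cdot))P_{\calJ}f$ in a convex range of scales, evaluated at a point within $CD^{\min\sigma}$ of $x$; so its modulus is $\le T_{\calN}(P_{\calJ}f)(x)$ up to the modulated kernel being a legitimate CZ kernel with uniform constants. (3) Localize $g$ to $\calL$: since $E(\Tp)\subseteq I_{\Tp}$ and every point of $\cup_{\Tp}I_{\Tp}$ lies in some $L\in\calL(\TT)$, and the operator $T_{\calN}$ applied to $P_{\calJ}f$ varies slowly at scale $\ell(L)$ inside $L$ (again because $\ell(L)$ is at most the scale of the smallest tile above it, and the kernels at scale $\ge\scale(L)$ are smooth at the scale of $L$), one can replace $g$ by $P_{\calL}|g|$. (4) Conclude by $\norm{T_{\calN}}_{p\to p}\lesssim 1$ and H\"older: $\int (T_{\calN}(P_{\calJ}f)) P_{\calL}|g|\le \norm{T_{\calN}}_{p\to p}\norm{P_{\calJ}|f|}_{p}\norm{P_{\calL}|g|}_{p'}$.

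I expect the main obstacle to be step (1)–(2): making precise the passage from the family of tile operators $T_{\Tp}$ with their individual linearizing polynomials $Q_{x}$ to a \emph{single} truncated operator $T_{\sigma}$ of the Cotlar type. The subtlety is that $Q_{x}$ genuinely depends on $x$ even within the tree, but only up to an element of $\calQ(\Tp)\subseteq B_{I_{\Tp}}(Q_{\Tp},1)$; one must show that the residual oscillation $e(Q_{x}(x)-Q_{x}(y)-(Q_{\TT}(x)-Q_{\TT}(y)))$, multiplied into $\CZK_{\scale(\Tp)}(x,y)$, still yields a kernel satisfying the size and regularity bounds \eqref{eq:Ks-size}, \eqref{eq:Ks-reg} with constants uniform over $\Tp\in\TT$ — this is where the radius-$1$ control in \eqref{eq:tile:ball} and Lemma~\ref{lem:normQ} are used — and that the convexity of $\sigma(\TT,x)$ (noted right after Definition~\ref{def:T-scales}) is exactly what lets the Cotlar inequality \eqref{eq:nontang-max-trunc} apply. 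The non-tangential (rather than vertical) form of the maximal truncation is essential because the ``good'' replacement point for $x\in E(\Tp)$ need not be $x$ itself but only a nearby point at scale $D^{\min\sigma(x)}$, arising when one averages $f$ over cubes of $\calJ$. Once this kernel bookkeeping is done cleanly, steps (3) and (4) are routine averaging arguments of the kind already used in the error-term estimates above.
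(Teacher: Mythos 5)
Your overall skeleton (conjugate by the fixed top polynomial $Q_{\TT}$, average $f$ over $\calJ$ using that $\calJ$-cubes meeting a tile are small, control the main term by the non-tangential maximal truncation \eqref{eq:nontang-max-trunc}, localize $g$ to $\calL$, and finish with H\"older) matches the paper's, but the step you yourself flag as the main obstacle is not actually resolved, and as proposed it fails. You keep the residual phase $e\bigl((Q_{x}-Q_{\TT})(x)-(Q_{x}-Q_{\TT})(y)\bigr)$ inside the kernel and argue that the resulting kernel is a ``legitimate CZ kernel with uniform constants'' to which Cotlar's inequality applies. Uniform size and H\"older bounds for this modulated kernel at the scales of $\sigma(\TT,x)$ do hold (by \eqref{eq:tile:ball}, $4\Tp<\top\TT$ and Lemma~\ref{lem:normQ}), but they are not enough: Cotlar's inequality, and hence the $L^{p}$ bound for $T_{\calN}$, needs the a priori $L^{2}$ boundedness of the truncations, which is hypothesized only for $\CZK$ and transfers to modulations by a \emph{fixed} polynomial (conjugation by the unimodular factor $e(Q_{\TT})$), but not to the $x$-dependent modulation by $Q_{x}$; establishing $L^{2}$ bounds for such $x$-dependently modulated kernels is the content of the whole theorem, so this step is circular. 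The paper's proof avoids treating the discrepancy as oscillatory at all: for $x\in E(\Tp)$ one has $\norm{Q_{x}-Q_{\TT}}_{I_{\Tp}}\lesssim 1$ at the top scale $\smax(x)$, hence at scale $s\leq\smax(x)$ the factor $\abs{e(\cdot)-1}$ is $O(D^{s-\smax(x)})$, and the entire discrepancy is summed as a positive error dominated by $MP_{\calJ}\abs{f}$ (the term $A$), while only the unmodulated truncated operator $T_{\sigma}P_{\calJ}$ is compared with $T_{\calN}$ (the term $B$) and the $(1-P_{\calJ})$ error is handled by the kernel regularity \eqref{eq:Ks-reg} (the term $C$). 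Without this splitting your step (2) has no proof.

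Two further points. Discarding $\bd(\TT)$ via Proposition~\ref{prop:sf} is not a legitimate reduction: the lemma concerns an arbitrary tree (and its proof nowhere needs normality), and Proposition~\ref{prop:sf} only gives an $L^{2}\to L^{2}$ operator-norm bound tied to the density level $n$ of the specific decomposition \eqref{eq:tree-dec}, whereas \eqref{eq:tree:proj} is a bound by $\norm{P_{\calJ}\abs{f}}_{p}\norm{P_{\calL}\abs{g}}_{p'}$, which can be far smaller than $\norm{f}_{p}\norm{g}_{p'}$; this gain is exactly what Corollary~\ref{cor:tree}, Lemma~\ref{lem:T*-stopping-cut} and Proposition~\ref{prop:Fef-forest:small-support} later extract, so the boundary tiles cannot simply be thrown away. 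Finally, in your step (1) the inequality should read $\ell(J)\lesssim D^{\scale(\Tp)}$ (indeed $J\subset 3I_{\Tp}$, as in \eqref{eq:calJ-smallness}), not $\gtrsim$, and the replacement error is not ``controlled by $Mf$'' in any usable way, since $Mf$ is not dominated by quantities built from $P_{\calJ}\abs{f}$; the point is rather that every average that arises is taken over a ball of radius at least the local $\calJ$-scale and therefore coincides with the corresponding average of $P_{\calJ}\abs{f}$, which is what allows the final bound to close in $\norm{P_{\calJ}\abs{f}}_{p}$.
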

\begin{proof}
The conclusion \eqref{eq:tree:proj} will follow from the estimate
\begin{equation}
\label{eq:tree-2loc}
\sup_{x\in L} \abs{\overline{e(Q_{\TT})} T_{\TT} e(Q_{\TT}) f}(x)
\leq
C \inf_{x\in L} (M+S) P_{\calJ} \abs{f} (x)
+
\inf_{x\in L} \abs{T_{\calN} P_{\calJ} f (x)},
\end{equation}
where
\begin{enumerate}
\item $L\in\calL$ is arbitrary,
\item $Q_{\TT}$ denotes the central polynomial of $\TT$ (notice that the left-hand side is well-defined in the sense that it does not depend on the choice of the constant term of $Q_{\TT}$),
\item the operator $S$, while depending on $\TT$, is bounded on $L^{p}(\R^{\ds})$ for $1<p<\infty$ with constants independent of $\TT$, and
\item the non-tangentially maximally truncated singular integral $T_{\calN}$, defined in \eqref{eq:nontang-max-trunc}, is bounded on $L^{p}(\R^{\ds})$ by Cotlar's inequality.
\end{enumerate}

Let $\sigma=\sigma(\TT)$ be as in Definition~\ref{def:T-scales} and fix $x\in L\in\calL$.
By definition,
\begin{multline*}
\abs{\overline{e(Q_{\TT})} T_{\TT} e(Q_{\TT}) f(x)}\\
=
\abs[\Big]{ \sum_{s \in \sigma(x)}\int e(-Q_{\TT}(x)+Q_x(x)-Q_{x}(y)+Q_{\TT}(y))\CZK_s(x,y) f (y) \dif y }\\
\leq
\sum_{s \in \sigma(x)} \int \abs{e(Q_{\TT}(y)-Q_{x}(y)-Q_{\TT}(x)+Q_{x}(x))-1} \abs{\CZK_s(x,y)} \abs{f(y)} \dif y\\
+\abs[\big]{ T_{\sigma} P_{\calJ} f(x) }
+\abs[\big]{ T_{\sigma} (1-P_{\calJ}) f(x) }
=: A(x)+B(x)+C(x).
\end{multline*}
The term $B(x)$ is a truncated singular integral, and is dominated by $\inf_{L} T_{\calN} P_{\calJ} f$.

We turn to $A(x)$.
If $\CZK_{s}(x,y)\neq 0$, then $\abs{x-y}\lesssim D^{s}$, and in this case
\begin{multline*}
\abs{e(Q_{\TT}(y)-Q_{x}(y)-Q_{\TT}(x)+Q_{x}(x))-1}\\
\leq
\norm{Q_{x}-Q_{\TT}}_{B(x,CD^{s})}
\lesssim
D^{s-\smax(x)} \norm{Q_{x}-Q_{\TT}}_{B(x,CD^{\smax (x)})}
\lesssim
D^{s-\smax(x)},
\end{multline*}
where we have used Lemma~\ref{lem:normQ}.
For $x\in L\in\calL$, we have $\scale(L) \leq \smin(x)-1$, and it follows that
\[
A(x)
\lesssim
D^{-\smax(x)} \sum_{s\in\sigma(x)} D^{s(1-\ds)} \int_{B(x,0.5 D^{s})} \abs{f}(y) \dif y.
\]
Since the collection $\calJ$ is a partition of $\R^{\ds}$, this can be estimated by
\[
A(x)
\lesssim
D^{-\smax(x)} \sum_{s\in\sigma(x)} D^{s(1-\ds)} \sum_{J\in\calJ : J \cap B(x,0.5 D^{s}) \neq \emptyset} \int_{J} \abs{f}(y) \dif y.
\]
The expression on the right hand side does not change upon replacing $\abs{f}$ by $P_{\calJ}\abs{f}$.
Moreover,
\begin{equation}
\label{eq:calJ-smallness}
I_{\Tp}^{*} \cap J \neq \emptyset \text{ with } \Tp\in\TT \text{ and } J\in\calJ \implies J \subset 3I_{\Tp}.
\end{equation}
Hence, the sum over $J\in\calJ$ is in fact restricted to cubes contained in $B(x,CD^{s})$, so that
\begin{multline*}
A(x)
\lesssim
D^{-\smax(x)} \sum_{s\in\sigma(x)} D^{s(1-\ds)} \int_{B(x,CD^{s})} P_{\calJ} \abs{f}(y) \dif y\\
\lesssim
D^{-\smax(x)} \sum_{s\in\sigma(x)} D^{s} \inf_{L} M P_{\calJ} \abs{f}
\lesssim
\inf_{L} M P_{\calJ} \abs{f}.
\end{multline*}

It remains to treat $C(x)$.
Using \eqref{eq:calJ-smallness}, we estimate
\begin{multline*}
\abs{T_{\sigma} (1-P_{\calJ}) f(x)}
=
\abs[\big]{\sum_{\Tp\in\TT} \one_{E(\Tp)}(x) \int \CZK_{s(\Tp)}(x,y) ((1-P_{\calJ}) f)(y) \dif y}\\
\leq
\sum_{\Tp\in\TT} \one_{E(\Tp)}(x) \sum_{J\in\calJ : J \subseteq 3 I_{\Tp}} \sup_{y,y'\in J} \abs{\CZK_{s(\Tp)}(x,y)-\CZK_{s(\Tp)}(x,y')} \int_{J} \abs{f}\\
\lesssim
\sum_{I\in\calH} \one_{I}(x) \sum_{J\in\calJ : J \subseteq 3 I}
D^{-(\ds+\tau) \scale(I)} \diam(J)^{\tau} \int_{J} P_{\calJ} \abs{f},
\end{multline*}
where $\calH = \Set{ I_{\Tp} \given \Tp\in\TT}$.
The right-hand side of this inequality is constant on each $L\in\calL$.
Hence, we obtain \eqref{eq:tree-2loc} with
\[
S f(x) := \sum_{I\in\calD} \one_{I}(x) \sum_{J\in\calJ : J \subseteq 3 I}
D^{-(\ds+\tau) \scale(I)} \diam(J)^{\tau} \int_{J} f.
\]
It remains to obtain an $L^{p}$ estimate for the operator $S$.
We have
\begin{align*}
\abs[\big]{\int g S f}
&\lesssim
\sum_{I\in\calD,J\in\calJ : J \subseteq 3 I} (g)_{I} D^{\tau (\scale(J)-\scale(I))} \int_{J} \abs{f}\\
&\lesssim
\sum_{J\in\calJ} \int_{J} \abs{f} Mg \sum_{I\in\calD : J \subseteq 3 I} D^{\tau (\scale(J)-\scale(I))}\\
&\lesssim
\sum_{J\in\calJ} \int_{J} \abs{f} Mg\\
&\leq
\norm{f}_{p} \norm{Mg}_{p'}.
\end{align*}
By the Hardy--Littlewood maximal inequality and duality, it follows that $\norm{S}_{p\to p} \lesssim 1$ for $1<p<\infty$.
\end{proof}

\begin{corollary}
\label{cor:tree}
Let $\TT\subseteq\TP_{k}$ be a tree.
Let also $F\subseteq\R^{\ds}$ and $\kappa>0$ be such that
\begin{equation}
\label{eq:tree-loc}
I_{\Tp} \not\subseteq \Set{ M\one_{F} > \kappa }
\text{ for all }
\Tp\in\TT.
\end{equation}
Then, for every $1 < p < \infty$ and $f\in L^{p}(\R^{\ds})$, we have
\begin{equation}\label{eq:tree:local}
\norm{T_{\TT} \one_{F} f}_p
\lesssim
\kappa^{1/p'} \mdens_{k}(\TT)^{1/p} \norm{f}_{p}.
\end{equation}
\end{corollary}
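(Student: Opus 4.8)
The plan is to deduce \eqref{eq:tree:local} from the tree estimate \eqref{eq:tree:proj} by duality, arranging that the two localization gains $\kappa^{1/p'}$ and $\mdens_{k}(\TT)^{1/p}$ arise respectively from the two factors $P_{\calJ}\abs{f}$ and $P_{\calL}\abs{g}$ on its right-hand side. I would fix $g$ with $\norm{g}_{p'}\le 1$, set $E:=\bigcup_{\Tp\in\TT}E(\Tp)$, and note that $\supp T_{\TT}h\subseteq E$ for every $h$ by \eqref{eq:Ttile}, so replacing $g$ by $\one_{E}g$ changes nothing; then Lemma~\ref{lem:tree} applied to the functions $\one_{F}f$ and $\one_{E}g$, with $\calJ:=\calJ(\TT)$ and $\calL:=\calL(\TT)$, gives
\[
\Bigl|\int g\,T_{\TT}\one_{F}f\Bigr| \lesssim \bigl\|P_{\calJ}(\one_{F}\abs{f})\bigr\|_{p}\,\bigl\|P_{\calL}(\one_{E}\abs{g})\bigr\|_{p'}.
\]
Thus it suffices to prove $\norm{P_{\calJ}(\one_{F}\abs{f})}_{p}\lesssim\kappa^{1/p'}\norm{f}_{p}$ and $\norm{P_{\calL}(\one_{E}\abs{g})}_{p'}\lesssim\mdens_{k}(\TT)^{1/p}\norm{g}_{p'}$.

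For the first estimate: by its maximality every $J\in\calJ$ satisfies $100D\hat{J}\supseteq I_{\Tp}$ for some $\Tp\in\TT$, hence $I_{\Tp}\subseteq CJ$ with $C=C(D)$; by \eqref{eq:tree-loc} some point of $I_{\Tp}\subseteq CJ$ has $M\one_{F}\le\kappa$, so $\abs{F\cap J}\le\abs{F\cap CJ}\le\kappa\abs{CJ}\lesssim\kappa\abs{J}$. Applying H\"older on each $J$ and summing $p$-th powers over the partition $\calJ$,
\[
\norm{P_{\calJ}(\one_{F}\abs{f})}_{p}^{p} = \sum_{J\in\calJ}\abs{J}\Bigl(\tfrac1{\abs{J}}\!\int_{J}\one_{F}\abs{f}\Bigr)^{p} \le \sum_{J\in\calJ}\Bigl(\tfrac{\abs{F\cap J}}{\abs{J}}\Bigr)^{p-1}\!\int_{J}\abs{f}^{p} \lesssim \kappa^{p-1}\norm{f}_{p}^{p}.
\]
The same H\"older-and-disjointness argument reduces the second estimate to the pointwise density bound $\abs{E\cap L}\lesssim\mdens_{k}(\TT)\abs{L}$ for every $L\in\calL$, which is the heart of the matter.

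To prove that bound, assume $E\cap L\ne\emptyset$. Since $L\in\calL(\TT)$, no tile of $\TT$ has spatial cube inside $L$, so each $\Tp\in\TT$ with $E(\Tp)\cap L\ne\emptyset$ has $I_{\Tp}\supsetneq L$, hence $I_{\Tp}\supseteq\hat L$ and $\scale(\Tp)\ge\scale(\hat L)$; and for $x\in E(\Tp)$, combining $Q_{x}\in\calQ(\Tp)\subseteq B_{I_{\Tp}}(Q_{\Tp},1)$ with $Q_{\TT}\in\calQ(\top\TT)\subseteq\calQ(4\Tp)$ and the monotonicity of $\norm{\cdot}_{I}$ in $I$ gives $\norm{Q_{x}-Q_{\TT}}_{\hat L}<5$ together with $\smax(x)\ge\scale(\hat L)$. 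Using convexity of $\TT$, \eqref{eq:normQ:parent}, and the nestedness \ref{def:tile:nested} — applied to a tile of $\TT$ with $E(\Tp)\cap L\ne\emptyset$ and a tile of $\TT$ whose spatial cube lies in $\hat L$ (the latter supplied by the maximality of $L$) — one locates a tile $\Tr\in\TT$ with $I_{\Tr}=\hat L$; since $\Tr\in\TT$ we have $\norm{Q_{\Tr}-Q_{\TT}}_{\hat L}<4$, so the above shows $E\cap L\subseteq\mE(9\Tr)$. Finally \eqref{eq:densk}, taken with $\lambda=9$ and comparison tile $\Tr$ itself, yields $\abs{\mE(9\Tr)}\le 9^{\dim\calQ}\mdens_{k}(\Tr)\abs{I_{\Tr}}\le 9^{\dim\calQ}\mdens_{k}(\TT)\abs{\hat L}$, and $\abs{\hat L}=D^{\ds}\abs{L}$.

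The one genuinely delicate point is the existence of $\Tr\in\TT$ with $I_{\Tr}=\hat L$ — i.e.\ that near a leaf of $\calL(\TT)$ the tree $\TT$ contains a tile of the expected spatial scale; once the dilation constants are tracked this should be forced by convexity of $\TT$ and the definition of $\calL(\TT)$. Everything else — the tree estimate, the $\kappa$-packing, the H\"older/disjointness computation, and the maximal- and singular-integral bounds used inside Lemma~\ref{lem:tree} — is routine.
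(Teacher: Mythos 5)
Your reduction to the two packing estimates is the same route the paper takes (duality via Lemma~\ref{lem:tree}, the $\kappa$-packing on the cubes of $\calJ(\TT)$ coming from \eqref{eq:tree-loc}, and the H\"older/disjointness computation), and those parts are fine. The gap is exactly the step you flagged: a tree need not contain a tile with spatial cube $\hat L$, and convexity does not force this. Convexity only interpolates between two tiles of $\TT$ that are already comparable under ``$<$''; here the tile $\Tp_{L}\in\TT$ with $I_{\Tp_{L}}\subseteq\hat L$ (supplied by maximality of $L$) and a tile $\Tp\in\TT$ with $E(\Tp)\cap L\neq\emptyset$, $I_{\Tp}\supseteq\hat L$, need not be comparable: membership in $\TT$ only forces $\calQ(\top\TT)\subseteq B_{I_{\Tp_{L}}}(Q_{\Tp_{L}},4)$ and $\calQ(\top\TT)\subseteq B_{I_{\Tp}}(Q_{\Tp},4)$, which is perfectly compatible with $\calQ(\Tp_{L})\cap\calQ(\Tp)=\emptyset$, since at the small scale $I_{\Tp_{L}}$ there are many pairwise disjoint uncertainty regions within $\norm{\cdot}_{I_{\Tp_{L}}}$-distance $4$ of $Q_{\TT}$. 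Concretely, take $\TT=\{\Tp_{a},\Tp_{b}\}$ with $\scale(\Tp_{a})=\scale(\Tp_{b})-3$, $I_{\Tp_{a}}\subset I_{\Tp_{b}}$, $\calQ(\Tp_{a})\cap\calQ(\Tp_{b})=\emptyset$, and both tiles $4$-close to a common top: this two-element set is vacuously convex, hence a tree, and the leaf $L\in\calL(\TT)$ of scale $\scale(\Tp_{b})-2$ lying inside the child of $I_{\Tp_{b}}$ that contains $I_{\Tp_{a}}$ (chosen with $I_{\Tp_{a}}\not\subseteq L$) has $\hat L$ of scale $\scale(\Tp_{b})-1$, a scale at which $\TT$ has no tile at all, while $E(\Tp_{b})\cap L$ may well be nonempty. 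So the tile $\Tr$ your argument needs does not exist in general, and the final chain ending with $\mdens_{k}(\Tr)\leq\mdens_{k}(\TT)$ breaks down.

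This is precisely the point for which the auxiliary comparison tile in the definition \eqref{eq:densk} was built in: you do not need the scale-$\hat L$ tile to lie in $\TT$, only in $\TP_{k}$. The paper takes $\Tp_{L}\in\TT$ with $I_{\Tp_{L}}\subseteq\hat L$ and, if $I_{\Tp_{L}}\neq\hat L$, the unique tile $\Tp_{L}'\in\TP_{k}$ with $I_{\Tp_{L}'}=\hat L$ and $Q_{\TT}\in\calQ(\Tp_{L}')$ (it lies in $\TP_{k}$ because $\hat L\in\calC_{k}$). Using \eqref{eq:normQ:parent} one checks $10\Tp_{L}\leq 10\Tp_{L}'$ and $10\Tp_{L}'\leq\Tp$ for every $\Tp\in\TT$ meeting $L$, whence $E\cap L\subseteq\mE(10\Tp_{L}')$ and, by \eqref{eq:densk} applied with $\lambda=10$ and comparison tile $\Tp_{L}'$, $\abs{E\cap L}\leq 10^{\dim\calQ}\mdens_{k}(\Tp_{L})\abs{\hat L}\lesssim\mdens_{k}(\TT)\abs{L}$. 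With this substitution the rest of your write-up goes through and coincides with the paper's proof.
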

Notice that the hypothesis \eqref{eq:tree-loc} holds with $\kappa=1$ and $F=\R^{\ds}$ for every tree $\TT$.
\begin{proof}
Fix $L\in\calL:=\calL(\TT)$.
By construction, $\hat{L} \in \calC_{k}$, and there exists a tile $\Tp_{L}\in\TT$ with $I_{\Tp_{L}}\subseteq\hat{L}$.
If $I_{\Tp_{L}}=\hat{L}$, let $\Tp_{L}':=\Tp_{L}$, otherwise let $\Tp_{L}'\in\TP_{k}$ be the unique tile with $I_{\Tp_{L}'} = \hat{L}$ and $Q_{\TT}\in\calQ(\Tp_{L}')$.
In both cases, the tile $\Tp_{L}'$ satisfies
\begin{enumerate}
\item $10\Tp_{L} \leq 10\Tp_{L}'$ and
\item for every $\Tp\in\TT$ with $L\cap I_{\Tp} \neq \emptyset$ we have $10\Tp_{L}' \leq \Tp$.
\end{enumerate}
It follows that the spatial support
\[
E(L) :=
L\cap \bigcup_{\Tp\in\TT} E(\Tp)
=
L\cap \bigcup_{\Tp\in\TT : I_{\Tp} \supset L} E(\Tp)
\]
satisfies
\begin{equation}
\label{eq:leaf-mass}
\abs{E(L)}
\leq
\abs{\mE(10\Tp_{L}')}
\leq
10^{\dim\calQ} \abs{I_{\Tp_{L}'}} \mdens_{k}(\Tp_{L})
\lesssim
\mdens_{k}(\TT) \abs{L}.
\end{equation}

It also follows from the hypothesis \eqref{eq:tree-loc} that
\begin{equation}
\label{eq:dens-kappa}
\meas{F \cap J} \lesssim \kappa \meas{J}
\end{equation}
for all $J\in\calJ:=\calJ(\TT)$.
Using Lemma~\ref{lem:tree}, H\"older's inequality, and the estimates \eqref{eq:leaf-mass} and \eqref{eq:dens-kappa}, we obtain
\begin{align*}
\abs[\Big]{\int_{\R^{\ds}} g T_{\TT} \one_{F} f}
&=
\abs[\Big]{\int_{\R^{\ds}} \sum_{L\in\calL}\one_{E(L)} g T_{\TT} \one_{F} f}\\
&\lesssim
\norm{P_{\calL} \abs[\big]{\sum_{L\in\calL}\one_{E(L)} g} }_{p'} \norm{P_{\calJ} \abs{\one_{F} f} }_{p}\\
&=
\Bigl( \sum_{L\in\calL} \meas{L} \bigl( \meas{L}^{-1} \int_{L} \one_{E(L)} \abs{g} \bigr)^{p'} \Bigr)^{1/p'}
\Bigl( \sum_{J\in\calJ} \meas{J} \bigl( \meas{J}^{-1} \int_{J} \one_{F} \abs{f} \bigr)^{p} \Bigr)^{1/p}\\
&\leq
\Bigl( \sum_{L\in\calL} \meas{L} \bigl( \meas{L}^{-1} \int_{L} \abs{g}^{p'} \bigr)
\bigl( \meas{L}^{-1} \int_{L} \one_{E(L)}^{p} \bigr)^{p'/p} \Bigr)^{1/p'}\\
&\quad \cdot
\Bigl( \sum_{J\in\calJ} \meas{J} \bigl( \meas{J}^{-1} \int_{J} \abs{f}^{p} \bigr)
\bigl( \meas{J}^{-1} \int_{J} \one_{F}^{p'} \bigr)^{p/p'} \Bigr)^{1/p}\\
&\lesssim
\mdens_{k}(\TT)^{1/p} \kappa^{1/p'}
\Bigl( \sum_{L\in\calL} \int_{L} \abs{g}^{p'} \Bigr)^{1/p'}
\Bigl( \sum_{J\in\calJ} \int_{J} \abs{f}^{p} \Bigr)^{1/p}\\
&\leq
\mdens_{k}(\TT)^{1/p} \kappa^{1/p'}
\norm{g}_{p'} \norm{f}_{p}.
\qedhere
\end{align*}
\end{proof}

\subsection{Separated trees}
\begin{definition}
\label{def:normal}
A tree $\TT$ is called \emph{normal} if for every $\Tp\in\TT$ we have $I_{\Tp}^{*} \subset I_{\TT}$.
\end{definition}
For a normal tree $\TT$, we have $\supp T_{\TT}^{*}g \subseteq I_{\TT}$ for every function $g$.
\begin{lemma}\label{lem:sep-trees}
There exists $\epsilon=\epsilon(d,\tau)>0$ such that, for any two $\Delta$-separated normal trees $\TT_1, \TT_{2}$, we have
\begin{equation}
\label{eq:sep-trees}
\abs[\Big]{\int_{\R^{\ds}} T_{\TT_{1}}^* g_{1} \overline{ T_{\TT_{2}}^* g_{2}} }
\lesssim
\Delta^{-\epsilon} \prod_{j=1,2} \norm{\abs{T_{\TT_{j}}^{*} g_{j}} + Mg_{j}}_{L^{2}(I_{\TT_{1}} \cap I_{\TT_{2}})}.
\end{equation}
\end{lemma}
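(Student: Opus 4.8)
The plan is as follows. First reduce: if $I_{\TT_{1}}\cap I_{\TT_{2}}=\emptyset$ then, the trees being normal, $\supp T_{\TT_{j}}^{*}g_{j}\subseteq I_{\TT_{j}}$, so both sides of \eqref{eq:sep-trees} vanish; hence we may assume (after possibly swapping indices) $I_{\TT_{2}}\subseteq I_{\TT_{1}}$, so that $I_{\TT_{1}}\cap I_{\TT_{2}}=I_{\TT_{2}}$ and $\supp T_{\TT_{2}}^{*}g_{2}\subseteq I_{\TT_{2}}$. Moreover we may assume $\Delta$ larger than any fixed constant, since otherwise $\Delta^{-\epsilon}\gtrsim 1$ and \eqref{eq:sep-trees} follows from Cauchy--Schwarz. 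Write $h_{j}:=T_{\TT_{j}}^{*}g_{j}$. The source of the gain is that $\Delta$-separateness forces the polynomial $R:=Q_{\TT_{1}}-Q_{\TT_{2}}$ to oscillate at least $\gtrsim\Delta$ times across \emph{every} leaf cube $L\in\calL(\TT_{2})$: the construction in the proof of Corollary~\ref{cor:tree} provides a tile $\Tp_{L}\in\TT_{2}$ with $I_{\Tp_{L}}\subseteq\hat{L}\subseteq I_{\TT_{2}}\subseteq I_{\TT_{1}}$ and $\scale(I_{\Tp_{L}})\le\scale(L)+1$, and combining $4\Tp_{L}<\top\TT_{2}$ (so $\norm{Q_{\Tp_{L}}-Q_{\TT_{2}}}_{I_{\Tp_{L}}}\le 4$) with part~(2) of Definition~\ref{def:sep} (so $\norm{Q_{\Tp_{L}}-Q_{\TT_{1}}}_{I_{\Tp_{L}}}>\Delta-1$) gives $\norm{R}_{I_{\Tp_{L}}}>\Delta-5$; since the scale ratio $\ell(I_{\Tp_{L}})/\ell(L)$ is bounded by $D$, Corollary~\ref{cor:normQ} (or Lemma~\ref{lem:normQ}) yields $\norm{R}_{L}\gtrsim\Delta$.

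Next I would partition $\supp h_{2}\subseteq I_{\TT_{2}}$ into the leaf cubes $L\in\calL(\TT_{2})$ and, on each $L$, factor out the modulation $e(Q_{\TT_{2}})$ from $h_{2}$. Expanding $e(-Q_{\TT_{2}})h_{2}$ scale by scale, the residual phases $e\bigl((Q_{x}-Q_{\TT_{2}})(\cdot)\bigr)$ attached to $x\in E(\Tp)$, $\Tp\in\TT_{2}$, have oscillation $\lesssim D^{-(\scale(\Tp)-\scale(L))}$ over $L$ (by Lemma~\ref{lem:normQ}, since $\norm{Q_{x}-Q_{\TT_{2}}}_{I_{\Tp}}\le 5$), and the kernels satisfy the $\tau$-Hölder bound \eqref{eq:Ks-reg}; summing the resulting geometric series one obtains $e(-Q_{\TT_{2}})h_{2}=\psi_{L}+r_{L}$ on $L$, where $\psi_{L}$ is slowly varying at scale $\ell(L)$ with oscillation over $L$ at most $\lesssim\inf_{L}Mg_{2}$, $\sup_{L}\lvert r_{L}\rvert\lesssim\inf_{L}Mg_{2}$, and $\sup_{L}\lvert\psi_{L}\rvert\lesssim\inf_{L}(\lvert h_{2}\rvert+Mg_{2})$. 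Replacing $\psi_{L}$ by a constant $c_{L}$ with $\lvert c_{L}\rvert\lesssim\inf_{L}(\lvert h_{2}\rvert+Mg_{2})$, the problem reduces to estimating $\sum_{L}\overline{c_{L}}\int_{L}e(-Q_{\TT_{2}})h_{1}$, the error terms being controlled by $\sum_{L}(\inf_{L}Mg_{2})\int_{C L}\lvert h_{1}\rvert$ and handled by the same scheme (keeping the oscillatory factor) or absorbed by interpolation with the trivial Cauchy--Schwarz bound. Now, since $h_{1}=T_{\TT_{1}}^{*}g_{1}$ and $T_{\TT_{1}}$ acts on $E(\Tp)$, $\Tp\in\TT_{1}$, with scales $\sigma(\TT_{1},\cdot)$,
\begin{equation*}
\int_{L}e(-Q_{\TT_{2}})h_{1}=\int g_{1}(x)\,\overline{T_{\TT_{1}}\bigl(e(Q_{\TT_{2}})\one_{L}\bigr)(x)}\,\dif x,
\qquad
T_{\TT_{1}}\bigl(e(Q_{\TT_{2}})\one_{L}\bigr)(x)=e(Q_{x}(x))\,\bigl[T_{\sigma(\TT_{1},x)}\bigl(e((Q_{\TT_{2}}-Q_{x})(\cdot))\one_{L}\bigr)\bigr](x),
\end{equation*}
i.e.\ an \emph{unmodulated} truncated singular integral applied to the wave packet $e((Q_{\TT_{2}}-Q_{x})(\cdot))\one_{L}$, whose phase satisfies $\norm{Q_{\TT_{2}}-Q_{x}}_{I_{\Tp}}\gtrsim\Delta$ for every $\Tp\in\TT_{1}$ relevant at the point $x$ — by part~(1) of Definition~\ref{def:sep} when $I_{\Tp}\subseteq I_{\TT_{2}}$, and via $\norm{R}_{L}\gtrsim\Delta$ and $\norm{Q_{x}-Q_{\TT_{1}}}_{I_{\Tp}}\le 5$ when $I_{\Tp}\supseteq I_{\TT_{2}}$ (the finitely many tiles with $I_{\Tp}$ disjoint from but adjacent to $I_{\TT_{2}}$ forming a harmless boundary term).

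Applying the single-scale van der Corput estimate Lemma~\ref{lem:osc-int} to each scale of the truncation produces a factor $\Delta^{-\tau/d}$ there, while the lower scales of the truncation are absorbed by the non-tangential Cotlar inequality; the $\tau$-Hölder fluctuations of the kernels, which are slowly varying relative to the $\gtrsim\Delta$-fast phase, contribute an additional $\Delta^{-\tau/d}$ by a dyadic decomposition of the kind internal to Lemma~\ref{lem:osc-int}. This gives a pointwise bound of $\lvert T_{\TT_{1}}(e(Q_{\TT_{2}})\one_{L})(x)\rvert$ by $\Delta^{-\epsilon}$ times a fast-decaying average of $\one_{L}$ centred at $L$ (plus an additive $Mg_{1}$-type term from the fluctuations), hence $\lvert\int_{L}e(-Q_{\TT_{2}})h_{1}\rvert\lesssim\Delta^{-\epsilon}\bigl(\lvert L\rvert^{1/2}\norm{h_{1}}_{L^{2}(C L)}+\lvert L\rvert\inf_{L}Mg_{1}\bigr)$. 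Summing over $L\in\calL(\TT_{2})$ — which are pairwise disjoint, so the enlargements $C L$ have bounded overlap — and using $\sum_{L}\lvert L\rvert\inf_{L}u\,\inf_{L}v\le\int_{I_{\TT_{2}}}uv\le\norm{u}_{L^{2}(I_{\TT_{2}})}\norm{v}_{L^{2}(I_{\TT_{2}})}$ with $u=\lvert h_{1}\rvert+Mg_{1}$, $v=\lvert h_{2}\rvert+Mg_{2}$, together with the Hardy--Littlewood maximal inequality, yields \eqref{eq:sep-trees}. The main obstacle is precisely this last step: a leaf of $\TT_{2}$ can lie above arbitrarily many scales of $\TT_{1}$, so the gain cannot be obtained tile by tile (summing $\Delta^{-\tau/d}$ over all those scales diverges) and must instead be extracted from the maximally truncated operator $T_{\TT_{1}}$ as a whole, which is exactly where the interplay of the Cotlar inequality, the oscillation of $Q_{\TT_{2}}-Q_{x}$ guaranteed at every scale by $\Delta$-separateness, and the kernel regularity \eqref{eq:Ks-reg} is essential; the bulk of the work is the careful book-keeping of the polynomial phases via Lemma~\ref{lem:normQ}.
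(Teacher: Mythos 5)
Your outline diverges from the paper's argument at exactly the point you yourself flag as ``the main obstacle,'' and that obstacle is not closed. The whole weight of your proof rests on the claimed bound $\abs{T_{\TT_{1}}(e(Q_{\TT_{2}})\one_{L})(x)}\lesssim \Delta^{-\epsilon}\cdot(\text{decaying average of }\one_{L})+\dots$ for a leaf $L\in\calL(\TT_{2})$, but the mechanism you offer for it does not produce it: applying Lemma~\ref{lem:osc-int} scale by scale along $\sigma(\TT_{1},x)$ gives a factor $\Delta^{-\tau/d}$ \emph{per scale} with no decay in the scale parameter, and since a single leaf of $\TT_{2}$ can sit above arbitrarily many scales of $\TT_{1}$, the sum of these contributions diverges; invoking the non-tangential Cotlar inequality for ``the lower scales'' only yields an $O(1)$ bound for that part, with no power of $\Delta$, so the combination gives no overall gain. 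A Cotlar-type argument that preserves a $\Delta^{-\epsilon}$ gain for a maximal truncation acting on the wave packet $e((Q_{\TT_{2}}-Q_{x})\cdot)\one_{L}$ would itself be a nontrivial almost-orthogonality argument, and it is precisely what is missing. Two further loose ends depend on the same unproved step: the errors from replacing $\psi_{L}$ by the constant $c_{L}$ are ``handled by the same scheme,'' and your bound $\abs{L}^{1/2}\norm{h_{1}}_{L^{2}(CL)}$ must in the end be converted into local values of $\abs{T_{\TT_{1}}^{*}g_{1}}+Mg_{1}$ on $I_{\TT_{1}}\cap I_{\TT_{2}}$ (duality against $g_{1}$, which lives on the much larger cube, does not by itself give the localization required in \eqref{eq:sep-trees}).

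The paper avoids this summation-over-scales problem by a structurally different decomposition. It fixes a threshold $\Delta^{1-\eta}$, sets $\TS:=\{\Tp\in\TT_{1}\cup\TT_{2}:\norm{Q_{\TT_{1}}-Q_{\TT_{2}}}_{I_{\Tp}}\geq\Delta^{1-\eta}\}$, and partitions the smaller top cube $I_{0}$ into cubes $J\in\calJ(\TS)$ adapted to the tiles of \emph{both} trees, so that below scale $\ell(J)$ there are no relevant tiles of either tree. On each such $J$ both demodulated adjoints $e(-Q_{\TT_{j}})T_{\TT_{j}}^{*}g_{j}$ are shown to be H\"older continuous at scale $\ell(J)$ up to errors $\lesssim \inf_{J}Mg_{j}$ (estimates \eqref{eq:tree*-Hoelder}--\eqref{eq:tree2*-loc-max}), and Lemma~\ref{lem:osc-int} is applied \emph{once per cube} to the product, with phase $Q_{\TT_{1}}-Q_{\TT_{2}}$ of size $\gtrsim\Delta^{1-\eta}$ on $J$; no scale-by-scale summation inside a cube ever occurs, and the local suprema are exactly of the form $\abs{T_{\TT_{j}}^{*}g_{j}}+Mg_{j}$, which yields the correctly localized right-hand side. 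The tiles outside $\TS$ (which have spatial cubes disjoint from $I_{0}$) are handled separately via the scale-gap claim \eqref{eq:sep-smallT2-T1} and the tree estimate Lemma~\ref{lem:tree}, gaining $\Delta^{-\eta/(2d)}$ from thin boundary strips, and $\eta$ is then optimized. If you want to salvage your outline, the repair is essentially to refine your leaf decomposition so that it is adapted to both trees simultaneously (i.e.\ use $\calJ(\TS)$ rather than $\calL(\TT_{2})$) and to run the oscillation argument on the product of the two demodulated adjoints rather than through $T_{\TT_{1}}$ applied to wave packets; your preliminary observations (the reduction to nested tops and large $\Delta$, and the lower bound $\norm{Q_{\TT_{1}}-Q_{\TT_{2}}}_{L}\gtrsim\Delta$ on leaves) are correct and correspond to steps in the paper.
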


\begin{proof}
The estimate clearly holds without decay in $\Delta$, so it suffices to consider $\Delta \gg 1$.
Without loss of generality, assume $I_{0} := I_{\TT_{1}} \subseteq I_{\TT_{2}}$ and $\TT_{1} \neq \emptyset$.

Recall that $Q_{\TT}$ denotes the central polynomial of a tree $\TT$ and let $Q := Q_{\TT_{1}}-Q_{\TT_{2}}$.
Let $0<\eta<1$ be chosen later, and let $\TS := \Set{\Tp \in \TT_{1}\cup\TT_{2} \given \norm{Q}_{I_{\Tp}} \geq \Delta^{1-\eta}}$.
It follows from the definition of $\Delta$-separation that
\[
\Tp \in (\TT_{1}\cup\TT_{2}) \land I_{\Tp} \subseteq I_{0}
\implies
\norm{Q}_{I_{\Tp}} \geq \Delta - 5,
\]
and the same still holds in the case $I_{\Tp} \supset I_{0}$ by monotonicity of the norms \eqref{eq:normI}.
Therefore, for sufficiently large $\Delta$, we may assume
\begin{equation}
\label{eq:sep-inside}
\Tp \in (\TT_{1}\cup\TT_{2})\setminus \TS
\implies
I_{\Tp} \cap I_{0} = \emptyset,
\end{equation}
and in partiular $\TT_{1} \subset \TS$.

Let $\calJ := \Set{ J \in \calJ(\TS)) \given J \subseteq I_{0}}$.
This is a partition of $I_{0}$.
Since the scales of adjacent cubes in this partition differ at most by $1$, there exists an adapted partition of unity $\one_{I_{0}} = \sum_{J\in\calJ} \chi_{J}$, where each $\chi_{J} : I_{0} \to [0,1]$ is a smooth function supported on $(1+1/D) J$ with $\abs{\nabla\chi_{J}} \lesssim \ell(J)^{-1}$.
We extend each $\chi_{J}$ to be zero on $\R^{\ds}\setminus I_{0}$; it will not matter that these extended functions are not necessarily continuous.

We claim that
\begin{equation}
\label{eq:leaf-separation}
\Delta_{J} := \norm{Q}_{J} \gtrsim \Delta^{1-\eta}
\text{ for all } J\in\calJ.
\end{equation}
Indeed, by definition there exists $\Tp \in \TS$ with $100D\hat{J} \supseteq I_{\Tp}$, and, by Lemma~\ref{lem:normQ}, we obtain
\[
\norm{Q}_{J}
\gtrsim
\norm{Q}_{100D\hat{J}}
\geq
\norm{Q}_{I_{\Tp}}
\gtrsim
\Delta^{1-\eta}.
\]

In order to prepare the application of Lemma~\ref{lem:osc-int}, we need to estimate local moduli of continuity of $T_{\TT}^{*}g$ for a tree $\TT$.
For every $\Tp\in\TT$ and $y,y'\in I_{\Tp}^{*}$, using \eqref{eq:Ks-size}, \eqref{eq:Ks-reg}, and Lemma~\ref{lem:normQ}, we obtain
\begin{align*}
\MoveEqLeft
\abs[\big]{e(Q_{\TT}(0)-Q_{\TT}(y))T_{\Tp}^{*}g(y) - e(Q_{\TT}(0)-Q_{\TT}(y'))T_{\Tp}^{*}g(y')}\\
&=
\abs[\Big]{\int \bigl(e(-Q_{x}(x)+Q_{x}(y)-Q_{\TT}(y)+Q_{\TT}(0)) \overline{\CZK_{\scale(\Tp)}(x,y)}\\
  & \quad
  - e(-Q_{x}(x)+Q_{x}(y')-Q_{\TT}(y')+Q_{\TT}(0)) \overline{\CZK_{\scale(\Tp)}(x,y')}\bigr) (\one_{E(\Tp)}g)(x) \dif x}\\
&\leq
\int_{E(\Tp)} \abs{g}(x) \abs[\Big]{e(-Q_{x}(y')+Q_{x}(y)-Q_{\TT}(y)+Q_{\TT}(y')) \overline{\CZK_{\scale(\Tp)}(x,y)} - \overline{\CZK_{\scale(\Tp)}(x,y')}}  \dif x\\
&\leq
\int_{E(\Tp)} \abs{g}(x) \Bigr( \abs{e(-Q_{x}(y')+Q_{x}(y)-Q_{\TT}(y)+Q_{\TT}(y'))-1} \abs{\CZK_{\scale(\Tp)}(x,y)}\\
&\quad
+ \abs{\overline{\CZK_{\scale(\Tp)}(x,y)} - \overline{\CZK_{\scale(\Tp)}(x,y')}} \Bigr)  \dif x\\
&\lesssim
\int_{E(\Tp)} \abs{g}(x) \Bigr( \norm{Q_{x}-Q_{\TT}}_{I_{\Tp}^{*}} \frac{\abs{y-y'}}{D^{s(\Tp)}} D^{-s(\Tp)\ds} + D^{-s(\Tp)\ds} \bigl(\frac{\abs{y-y'}}{D^{s(\Tp)}}\bigr)^{\tau} \Bigr)  \dif x\\
&\lesssim
\bigl(\frac{\abs{y-y'}}{D^{s(\Tp)}}\bigr)^{\tau} D^{-s(\Tp)\ds}\int_{E(\Tp)} \abs{g}(x) \dif x.
\end{align*}
Let $J\in\calD$ be such that for every $\Tp\in\TT$ we have $I_{\Tp}^{*} \cap (1+1/D) J \neq \emptyset \implies s(\Tp) \geq s(J)$.
Then, for every $y,y' \in (1+1/D) J$, we obtain
\begin{equation}
\label{eq:tree*-Hoelder}
\begin{split}
\MoveEqLeft
\abs[\big]{e(Q_{\TT}(0)-Q_{\TT}(y))T_{\TT}^{*}g(y) - e(Q_{\TT}(0)-Q_{\TT}(y'))T_{\TT}^{*}g(y')}\\
&\leq
\sum_{\Tp\in\TT : I_{\Tp}^{*} \cap (1+1/D) J \neq \emptyset}
\abs[\big]{e(Q_{\TT}(0)-Q_{\TT}(y))T_{\Tp}^{*}g(y) - e(Q_{\TT}(0)-Q_{\TT}(y'))T_{\Tp}^{*}g(y')}\\
&\lesssim
\sum_{s\geq s(J)} \sum_{\Tp\in\TT : I_{\Tp}^{*} \cap (1+1/D) J \neq \emptyset, s(\Tp)=s}
\bigl(\frac{\abs{y-y'}}{D^{s}}\bigr)^{\tau} D^{-s \ds}\int_{E(\Tp)} \abs{g}(x) \dif x\\
&\lesssim
\sum_{s\geq s(J)} \sum_{\Tp\in\TT : I_{\Tp}^{*} \cap (1+1/D) J \neq \emptyset, s(\Tp)=s}
\bigl(\frac{\abs{y-y'}}{D^{s}}\bigr)^{\tau} \inf_{J} Mg\\
&\lesssim
\bigl(\frac{\abs{y-y'}}{D^{s(J)}}\bigr)^{\tau} \inf_{J} Mg.
\end{split}
\end{equation}
The estimate \eqref{eq:tree*-Hoelder} implies in particular
\begin{equation}
\label{eq:tree*-loc-max}
\sup_{y\in (1+1/D) J} \abs{e(Q_{\TT}(0)-Q_{\TT}(y)) T_{\TT}^{*} g(y)}
\leq
\inf_{y\in \frac12 J} \abs{T_{\TT}^{*}g(y)}
+
C \inf_{y\in J} Mg(y).
\end{equation}
We claim that, for an absolute constant $s_{0}$, we have
\begin{equation}
\label{eq:separated-smalltiles}
\Tp\in\TT_{2}\setminus\TS, J\in\calJ, I_{\Tp}^{*} \cap J \neq \emptyset
\implies
s(\Tp) \leq s(J) + s_{0}.
\end{equation}
\begin{proof}[Proof of Claim \eqref{eq:separated-smalltiles}]
Let $s_{0} > 1$ be chosen later and suppose $s(\Tp)>s(J)+s_{0}$.
By definition, there exists $\Tp'\in\TS$ with $I_{\Tp'}\subseteq 100D\hat{J}$.
On the other hand, $10 I_{\Tp} \supset D^{s_{0}-1} \hat{J}$.
Using Lemma~\ref{lem:normQ}, we obtain
\[
\Delta^{1-\eta}
>
\norm{Q}_{I_{\Tp}}
\gtrsim
\norm{Q}_{10 I_{\Tp}}
\geq
\norm{Q}_{D^{s_{0}} \hat{J}}
\gtrsim
D^{s_{0}} \norm{Q}_{100D \hat{J}}
\geq
D^{s_{0}} \norm{Q}_{I_{\Tp}'}
\geq
D^{s_{0}} \Delta^{1-\eta},
\]
so that $1 > c D^{s_{0}}$ for some absolute constant $c>0$.
This is a contradiction if $s_{0}$ is sufficiently large.
\end{proof}
Using \eqref{eq:sep-inside} and \eqref{eq:separated-smalltiles} for every $J\in\calJ$, we obtain
\[
\sup_{y\in \frac12 J} \abs{T_{\TT_{2}\setminus\TS}^{*}g_{2}(y)}
\leq
\sup_{y\in \frac12 J} \sum_{s=s(J)}^{s(J)+s_{0}} \sum_{\Tp\in\TP : s(\Tp)=s} \abs{T_{\Tp}^{*}g_{2}(y)}
\lesssim
(s_{0}+1) \inf_{J} M g_{2}.
\]
Using \eqref{eq:tree*-loc-max} together with this fact, we obtain
\begin{equation}
\label{eq:tree2*-loc-max}
\begin{split}
\sup_{y\in (1+1/D) J} \abs{T_{\TT_{2}\cap\TS}^{*} g_{2}(y)}
&\leq
\inf_{y\in \frac12 J} \abs{T_{\TT_{2}\cap\TS}^{*}g_{2}(y)}
+
C \inf_{y\in J} Mg_{2}(y)\\
&\leq
\inf_{y\in \frac12 J} \abs{T_{\TT_{2}}^{*}g_{2}(y)}
+
\sup_{y\in \frac12 J} \abs{T_{\TT_{2}\setminus\TS}^{*}g_{2}(y)}
+
C \inf_{y\in J} Mg_{2}(y)\\
&\leq
\inf_{y\in \frac12 J} \abs{T_{\TT_{2}}^{*}g_{2}(y)}
+
C \inf_{y\in J} Mg_{2}(y)
\end{split}
\end{equation}
for $J\in\calJ$.
Using \eqref{eq:tree*-Hoelder} with $\TT=\TT_{1}$ and $\TT=\TT_{2}\cap\TS$, \eqref{eq:tree*-loc-max} with $\TT=\TT_{1}$, and \eqref{eq:tree2*-loc-max}, we obtain the estimate
\[
\abs{h_{J}(y)-h_{J}(y')}
\lesssim
\bigl(\frac{\abs{y-y'}}{\ell(J)}\bigr)^{\tau}
\prod_{j=1,2} \Bigl( \inf_{\frac12 J} \abs{T_{\TT_{j}}^{*} g_{j}} + \inf_{J} Mg_{j} \Bigr)
\]
for $y,y'\in I_{0} \cap (1+\frac1D) J$ and the functions
\[
h_{J}(y) :=
\chi_{J}(y)
\bigl( e(Q_{\TT_{1}}(0)-Q_{\TT_{1}}(y)) T_{\TT_{1}}^{*} g_{1}(y) \bigr)\\
\cdot
\overline{ \bigl( e(Q_{\TT_{2}}(0)-Q_{\TT_{2}}(y)) T_{\TT_{2}\cap\TS}^{*} g_{2}(y) \bigr) }.
\]
Moreover, since the function $T_{\TT_{1}}^{*} g_{1}$ is continuous and vanishes outside $I_{0}$, while $h_{J}$ vanishes outside $(1+\frac1D)J$, the same H\"older type estimate continues to hold for all $y,y'\in\R^{\ds}$.
Using \eqref{eq:leaf-separation} and Lemma~\ref{lem:osc-int}, this allows us to estimate
\begin{align*}
\abs[\Big]{\int_{\R^{\ds}} T_{\TT_{1}}^* g_{1} \overline{ T_{\TT_{2}\cap\TS}^* g_{2}} }
&\leq
\sum_{J} \abs[\Big]{\int e(Q(y)-Q(0)) h_{J}(y) \dif y}\\
&\lesssim
\sum_{J} \Delta_{J}^{-\tau/d} \meas{J} \prod_{j=1,2} \inf_{\frac12 J} \Bigl( \abs{T_{\TT_{j}}^{*} g_{j}} + Mg_{j} \Bigr)\\
&\lesssim
\Delta^{-(1-\eta)\tau/d}\int_{I_{0}} \prod_{j=1,2} \Bigl( \abs{T_{\TT_{j}}^{*} g_{j}} + Mg_{j} \Bigr)\\
&\leq
\Delta^{-(1-\eta)\tau/d}\prod_{j=1,2} \norm{ \abs{T_{\TT_{j}}^{*} g_{j}} + Mg_{j} }_{L^{2}(I_{0})}.
\end{align*}
It remains to consider the contribution of $\TT_{2}\setminus\TS$.
Let $\calJ' := \Set{J\in\calJ(\TT_{1}) \given J\subset I_{0}}$.
We claim that, for some $s_{\Delta}$ with $D^{s_{\Delta}} \sim \Delta^{\eta/d}$, we have
\begin{equation}
\label{eq:sep-smallT2-T1}
\Tp\in\TT_{2}\setminus\TS, J\in\calJ', I_{\Tp}^{*} \cap J \neq \emptyset
\implies
s(\Tp) \leq s(J) - s_{\Delta}.
\end{equation}
Indeed, if $s(\Tp) > s(J) - s_{\Delta}$, then $C D^{s_{\Delta}} I_{\Tp} \supset 100D\hat{J} \supset I_{\Tp'}$ for some $\Tp'\in\TT_{1}$, and, by Lemma~\ref{lem:normQ}, we obtain
\begin{multline*}
\Delta^{1-\eta}
>
\norm{Q}_{I_{\Tp}}
\gtrsim
D^{-d s_{\Delta}} \norm{Q}_{C D^{s_{\Delta}} I_{\Tp}}
\gtrsim
D^{-d s_{\Delta}} \norm{Q}_{100 D \hat{J}}\\
\geq
D^{-d s_{\Delta}} \norm{Q}_{I_{\Tp'}}
\geq
D^{-d s_{\Delta}} (\Delta - 5),
\end{multline*}
so that $1 > c D^{-d s_{\Delta}} \Delta^{\eta}$.
This is a contradiction if the proportionality constants in $D^{s_{\Delta}} \sim \Delta^{\eta/d}$ are chosen appropriately.
Using Lemma~\ref{lem:tree} and \eqref{eq:sep-smallT2-T1}, we obtain
\begin{align*}
\abs[\Big]{\int_{\R^{\ds}} T_{\TT_{1}}^* g_{1} \overline{ T_{\TT_{2}'}^* g_{2}} }
&\lesssim
\norm{g_{1}}_{2} \norm{P_{\calJ'} \abs{T_{\TT_{2}'}^* g_{2}} }_{2}\\
&\leq
\norm{g_{1}}_{2} \sum_{s\geq s_{\Delta}} \Bigl( \sum_{J\in\calJ'} \meas{J}^{-1} \abs[\Big]{\int_{J} \sum_{\Tp\in\TT_{2}' : s(\Tp) = s(J)-s, I_{\Tp}^{*} \cap J \neq \emptyset} T_{\Tp}^{*}g_{2} }^{2} \Bigr)^{1/2}\\
&\lesssim
\norm{g_{1}}_{2} \sum_{s\geq s_{\Delta}} \Bigl( \sum_{J\in\calJ'} \meas{J}^{-1} \abs[\Big]{\int_{J} M g_{2} \sum_{I\in\calD_{s(J)-s} : I\cap I_{0} = \emptyset, 2I \cap J \neq \emptyset} \one_{2I} }^{2} \Bigr)^{1/2}\\
&\lesssim
\norm{g_{1}}_{2} \sum_{s\geq s_{\Delta}} \Bigl( \sum_{J\in\calJ'} \Bigl(\int_{J} (M g_{2})^{2} \Bigr) \frac{\int_{J} \bigl( \sum_{I\in\calD_{s(J)-s} : I \cap J = \emptyset, 2I \cap J \neq \emptyset} \one_{2I}\bigr)^{2}}{\meas{J}} \Bigr)^{1/2}\\
&\lesssim
\norm{g_{1}}_{2} \sum_{s\geq s_{\Delta}} \Bigl( \sum_{J\in\calJ'} \Bigl(\int_{J} (M g_{2})^{2} \Bigr) \frac{D^{s(J)-s+s(J)(\ds-1)}}{D^{s(J) \ds}} \Bigr)^{1/2}\\
&\leq
\norm{g_{1}}_{2} \sum_{s\geq s_{\Delta}} D^{-s/2} \norm{M g_{2}}_{L^{2}(I_{0})}\\
&\lesssim
\Delta^{-\eta/(2d)} \norm{g_{1}}_{2} \norm{M g_{2}}_{L^{2}(I_{0})}.
\end{align*}
Choosing $\eta = 2\tau/(2\tau+1)$ and observing that $\norm{g_{1}}_{2} \leq \norm{Mg_{1}}_{L^{2}(I_{0})}$, we obtain the claim \eqref{eq:sep-trees} with $\epsilon = \tau/(d(2\tau+1))$.
\end{proof}

\subsection{Rows}
\begin{definition}
\label{def:row}
A \emph{row} is a union of normal trees with tops that have pairwise disjoint spatial cubes.
\end{definition}

\begin{lemma}[Row estimate]\label{lem:row}
Let $\TR_{1}$, $\TR_{2}$ be rows such that the trees in $\TR_{1}$ are $\Delta$-separated from the trees in $\TR_{2}$.
Then, for any $g_{1},g_{2}\in L^{2}(\R^{\ds})$, we have
\[
\abs[\Big]{ \int T_{\TR_{1}}^* g_{1} \overline{T_{\TR_{2}}^* g_{2}} }
\lesssim
\Delta^{-\epsilon} \norm{g_{1}}_{2} \norm{g_{2}}_{2}.
\]
\end{lemma}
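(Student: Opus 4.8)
The plan is to expand both row operators into their constituent tree operators, apply the single-tree estimate of Lemma~\ref{lem:sep-trees} to each pair of trees, and then recombine the local $L^{2}$ norms on its right-hand side using the disjointness of the spatial cubes of the trees belonging to one row.

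Write $\TR_{1} = \bigcup_{i} \TT_{1,i}$ and $\TR_{2} = \bigcup_{j} \TT_{2,j}$, so that $T_{\TR_{1}}^{*}g_{1} = \sum_{i} T_{\TT_{1,i}}^{*}g_{1}$ and $T_{\TR_{2}}^{*}g_{2} = \sum_{j} T_{\TT_{2,j}}^{*}g_{2}$, and the inner product on the left-hand side expands as $\sum_{i,j}\int T_{\TT_{1,i}}^{*}g_{1}\,\overline{T_{\TT_{2,j}}^{*}g_{2}}$. The hypothesis that the trees of $\TR_{1}$ are $\Delta$-separated from those of $\TR_{2}$ says precisely that every $\TT_{1,i}$ is $\Delta$-separated from every $\TT_{2,j}$; moreover all these trees are normal, being constituents of rows. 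Hence Lemma~\ref{lem:sep-trees} applies to each pair, and writing $I_{ij} := I_{\TT_{1,i}} \cap I_{\TT_{2,j}}$ it gives
\[
\abs[\Big]{\int T_{\TT_{1,i}}^{*}g_{1}\,\overline{T_{\TT_{2,j}}^{*}g_{2}}}
\lesssim
\Delta^{-\epsilon}\,
\norm{\abs{T_{\TT_{1,i}}^{*}g_{1}}+Mg_{1}}_{L^{2}(I_{ij})}\,
\norm{\abs{T_{\TT_{2,j}}^{*}g_{2}}+Mg_{2}}_{L^{2}(I_{ij})}.
\]
Summing over $i,j$, pulling out the uniform factor $\Delta^{-\epsilon}$, and applying the Cauchy--Schwarz inequality to the double sum, it remains to prove the two square-function bounds $\sum_{i,j}\norm{\abs{T_{\TT_{1,i}}^{*}g_{1}}+Mg_{1}}_{L^{2}(I_{ij})}^{2} \lesssim \norm{g_{1}}_{2}^{2}$ and, symmetrically, $\sum_{i,j}\norm{\abs{T_{\TT_{2,j}}^{*}g_{2}}+Mg_{2}}_{L^{2}(I_{ij})}^{2} \lesssim \norm{g_{2}}_{2}^{2}$.

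For the first bound, fix $i$ and sum over $j$ first: since $\TR_{2}$ is a row the cubes $I_{\TT_{2,j}}$ are pairwise disjoint, so $\sum_{j}\norm{\abs{T_{\TT_{1,i}}^{*}g_{1}}+Mg_{1}}_{L^{2}(I_{ij})}^{2} \leq \norm{\abs{T_{\TT_{1,i}}^{*}g_{1}}+Mg_{1}}_{L^{2}(I_{\TT_{1,i}})}^{2}$. Then sum over $i$, using that the cubes $I_{\TT_{1,i}}$ are pairwise disjoint because $\TR_{1}$ is a row: the $Mg_{1}$ contribution is bounded by $\norm{Mg_{1}}_{2}^{2} \lesssim \norm{g_{1}}_{2}^{2}$ via the Hardy--Littlewood maximal inequality, and for the $T_{\TT_{1,i}}^{*}g_{1}$ contribution one uses that each $\TT_{1,i}$ is normal, so $T_{\TT_{1,i}}^{*}g_{1}$ is supported in $I_{\TT_{1,i}}$ and equals $T_{\TT_{1,i}}^{*}(\one_{I_{\TT_{1,i}}}g_{1})$, while Lemma~\ref{lem:tree} with $p=2$, together with the $L^{2}$-boundedness of the averaging operators $P_{\calJ}$ and $P_{\calL}$, yields $\norm{T_{\TT_{1,i}}}_{2\to 2}\lesssim 1$. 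Hence $\sum_{i}\norm{T_{\TT_{1,i}}^{*}g_{1}}_{L^{2}(I_{\TT_{1,i}})}^{2} \lesssim \sum_{i}\norm{\one_{I_{\TT_{1,i}}}g_{1}}_{2}^{2} = \norm{g_{1}}_{2}^{2}$ by disjointness. This gives the first bound; the second is identical after exchanging the roles of the two rows and summing over $i$ before $j$.

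There is no real obstacle here; the only points requiring care are the translation of the $\Delta$-separation hypothesis for rows into the pairwise hypothesis needed by Lemma~\ref{lem:sep-trees}, and the two orthogonality steps — the first using disjointness of tree tops within one row to collapse the sum over the other row to a single cube, the second using normality together with the single-tree $L^{2}$ bound extracted from Lemma~\ref{lem:tree} to sum the local $L^{2}$ norms of $T_{\TT_{1,i}}^{*}g_{1}$ back to $\norm{g_{1}}_{2}^{2}$.
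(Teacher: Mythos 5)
Your proof is correct and follows essentially the same route as the paper: expand into pairs of trees, apply Lemma~\ref{lem:sep-trees} to each pair, use Cauchy--Schwarz on the double sum, collapse one index via disjointness of the top cubes within a row, and finish with the uniform $L^{2}$ bound for single-tree operators coming from Lemma~\ref{lem:tree} together with the Hardy--Littlewood maximal inequality. The only (cosmetic) difference is that the paper inserts the cutoffs $\one_{I_{\TT_{j}}}$ into $g_{j}$ before invoking Lemma~\ref{lem:sep-trees}, whereas you keep $Mg_{j}$ global and absorb it at the summation stage using the disjointness of the cubes $I_{\TT_{j}}$; both versions are valid.
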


\begin{proof}
The operators $S_{\TT}g := \abs{T_{\TT}^{*}g}+Mg$ are bounded on $L^{2}(\R^{\ds})$ uniformly in $\TT$ by Lemma~\ref{lem:tree} and the Hardy--Littlewood maximal inequality.
Using Lemma~\ref{lem:sep-trees}, we estimate
\begin{align*}
\abs[\Big]{ \int T_{\TR_{1}}^* g_{1} \overline{T_{\TR_{2}}^* g_{2}} }
&\leq
\sum_{\TT_{1}\in\TR_{1}, \TT_{2}\in\TR_{2}} \abs[\Big]{ \int T_{\TT_{1}}^* g_{1} \overline{T_{\TT_{2}}^* g_{2}} }\\
&=
\sum_{\TT_{1}\in\TR_{1}, \TT_{2}\in\TR_{2}} \abs[\Big]{ \int T_{\TT_{1}}^* (\one_{I_{\TT_{1}}} g_{1}) \overline{T_{\TT_{2}}^* (\one_{I_{\TT_{2}}} g_{2})} }\\
&\lesssim
\Delta^{-\epsilon} \sum_{\TT_{1}\in\TR_{1}, \TT_{2}\in\TR_{2}} \prod_{j=1,2} \norm{S_{\TT_{j}} \one_{I_{\TT_{j}}} g_{j}}_{L^{2}(I_{\TT_{1}} \cap I_{\TT_{2}})}\\
&\leq
\Delta^{-\epsilon}
\prod_{j=1,2} \Big(\sum_{\TT_{1}\in\TR_{1}, \TT_{2}\in\TR_{2}} \norm{S_{\TT_{j}} \one_{I_{\TT_{j}}} g_{j}}_{L^{2}(I_{\TT_{1}} \cap I_{\TT_{2}})}^{2} \Big)^{1/2}\\
&\leq
\Delta^{-\epsilon}
\prod_{j=1,2} \Big(\sum_{\TT_{j}\in\TR_{j}} \norm{S_{\TT_{j}} \one_{I_{\TT_{j}}} g_{j}}_{L^{2}(I_{\TT_{j}})}^{2} \Big)^{1/2}\\
&\lesssim
\Delta^{-\epsilon}
\prod_{j=1,2} \Big(\sum_{\TT_{j}\in\TR_{j}} \norm{\one_{I_{\TT_{j}}} g_{j}}_{L^{2}(\R^{\ds})}^{2} \Big)^{1/2}\\
&\leq
\Delta^{-\epsilon}
\norm{g_{1}}_{2}
\norm{g_{2}}_{2}.
\qedhere
\end{align*}
\end{proof}

\subsection{Forest estimate}
Recall our decomposition \eqref{eq:tree-dec} of the set of all tiles.
In view of Proposition~\ref{prop:sf}, it remains to estimate the contribution of the normal trees
\[
\TN_{n,k,j,l} := \TT_{n,k,j,l} \setminus \bd(\TT_{n,k,j,l})
\]
These sets are indeed (convex) trees, since $\bd(\TT)$ are up-sets (recall the definition \eqref{eq:T-boundary}).
\begin{proposition}
\label{prop:Fef-forest}
Let $\TF_{n,k,j}' := \cup_{l} \TN_{n,k,j,l}$.
Then
\begin{equation}
\label{eq:Fef-forest-global}
\norm{T_{\TF_{n,k,j}'}}_{2\to 2} \lesssim 2^{-n/2}.
\end{equation}
Assuming in addition \eqref{eq:tree-loc} for all $\Tp\in\TF_{n,k,j}'$, we obtain
\begin{equation}
\label{eq:Fef-forest-local}
\norm{T_{\TF_{n,k,j}'} \one_{F}}_{2\to 2} \lesssim \kappa^{\alpha} 2^{-n \epsilon}
\end{equation}
for any $0\leq \alpha < 1/2$.
\end{proposition}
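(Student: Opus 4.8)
The plan is to organise $\TF_{n,k,j}'$ into rows, estimate each row by the single tree estimate, and glue the rows together by an almost orthogonality argument that exploits the large frequency separation of the trees.

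First I would record the structural input. Removing the up-set $\bd(\TT_{n,k,j,l})$ keeps each $\TN_{n,k,j,l}$ a convex tree, and now a \emph{normal} one with top cube $I_{\TT_{n,k,j,l}}$, while $\mdens_{k}(\TN_{n,k,j,l})\le\mdens_{k}(\TF_{n,k,j})\lesssim 2^{-n}$. By \eqref{eq:F-forest-counting} the $D$-adic cubes $I_{\TT_{n,k,j,l}}$ have overlap $\lesssim 2^{n}\log(n+1)$; since any two $D$-adic cubes are nested or disjoint, these cubes can be coloured with $M\lesssim 2^{n}\log(n+1)$ colours so that equal colours give disjoint cubes, which partitions $\Set{\TN_{n,k,j,l}}_{l}$ into $M$ rows $\TR_{1},\dots,\TR_{M}$ in the sense of Definition~\ref{def:row}. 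Inside a single row the normal trees have pairwise disjoint top cubes, so the adjoints $T_{\TT}^{*}$, $\TT\in\TR_{m}$ --- each supported in, and depending only on data in, $I_{\TT}$ --- have mutually disjoint domains and ranges, whence $\norm{T_{\TR_{m}}}_{2\to 2}=\norm{T_{\TR_{m}}^{*}}_{2\to 2}\lesssim\sup_{\TT\in\TR_{m}}\norm{T_{\TT}}_{2\to 2}$, and likewise $\norm{T_{\TR_{m}}\one_{F}}_{2\to 2}\lesssim\sup_{\TT\in\TR_{m}}\norm{T_{\TT}\one_{F}}_{2\to 2}$. Corollary~\ref{cor:tree} with $p=2$ (using $\kappa=1$, $F=\R^{\ds}$ in the first case and the hypothesis \eqref{eq:tree-loc} in the second, where we may assume $\kappa\le 1$) therefore gives
\begin{equation*}
\norm{T_{\TR_{m}}}_{2\to 2}\lesssim\sup_{\TT\in\TR_{m}}\mdens_{k}(\TT)^{1/2}\lesssim 2^{-n/2},
\qquad
\norm{T_{\TR_{m}}\one_{F}}_{2\to 2}\lesssim\kappa^{1/2}\,2^{-n/2}.
\end{equation*}

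Next, for $m\ne m'$ the trees of $\TR_{m}$ are $2^{Cn}$-separated from those of $\TR_{m'}$, being distinct trees of the Fefferman forest, so Lemma~\ref{lem:row} (and the same estimate with the roles of $T$ and $T^{*}$ exchanged) yields $\norm{T_{\TR_{m}}^{*}T_{\TR_{m'}}}_{2\to 2}+\norm{T_{\TR_{m}}T_{\TR_{m'}}^{*}}_{2\to 2}\lesssim 2^{-C\epsilon n}$. Applying the Cotlar--Stein lemma to $\Set{T_{\TR_{m}}}_{m=1}^{M}$ then gives
\begin{equation*}
\norm{T_{\TF_{n,k,j}'}}_{2\to 2}\le\max_{m}\sum_{m'}\bigl(\norm{T_{\TR_{m}}^{*}T_{\TR_{m'}}}\vee\norm{T_{\TR_{m}}T_{\TR_{m'}}^{*}}\bigr)^{1/2}\lesssim 2^{-n/2}+M\,2^{-C\epsilon n/2},
\end{equation*}
and choosing the separation exponent $C$ large enough that $C\epsilon\ge 4$ makes the last term $\lesssim 2^{n}\log(n+1)\,2^{-2n}\lesssim 2^{-n/2}$, proving \eqref{eq:Fef-forest-global}. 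The decisive point is the use of Cotlar--Stein rather than a direct expansion of $\norm{T_{\TF_{n,k,j}'}^{*}g}_{2}^{2}$: the latter would weight the diagonal contributions $\norm{T_{\TR_{m}}^{*}g}_{2}^{2}$ by the counting function $\sum_{l}\one_{I_{\TT_{n,k,j,l}}}$, costing a factor $\log(n+1)$ that destroys the decay in $n$, whereas Cotlar--Stein only sees a single diagonal term $\max_{m}\norm{T_{\TR_{m}}}$.

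For \eqref{eq:Fef-forest-local} I would rerun this scheme with $T_{\TR_{m}}\one_{F}$ in place of $T_{\TR_{m}}$: the diagonal Cotlar--Stein terms improve to $\lesssim\kappa\,2^{-n}$ by the localized single row bound above, while the off-diagonal terms are handled by the localized form of the row estimate. For the latter one propagates the cutoff $\one_{F}$ through the proof of Lemma~\ref{lem:row} exactly as in the passage from Proposition~\ref{prop:sf} to Proposition~\ref{prop:sf-loc}: hypothesis \eqref{eq:tree-loc} forces $\meas{F\cap J}\lesssim\kappa\meas{J}$ on the cubes $J$ entering the row estimate, so estimating the relevant oscillatory integrals over $F$ trivially by $\kappa$ times their sup norm and taking a geometric average with the bound from Lemma~\ref{lem:osc-int} produces the extra factor $\kappa$. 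Feeding these bounds into Cotlar--Stein and finally taking a geometric average with \eqref{eq:Fef-forest-global} --- trading a residual power of $\kappa$ against decay in $n$ and absorbing polynomial-in-$n$ losses --- yields $\norm{T_{\TF_{n,k,j}'}\one_{F}}_{2\to 2}\lesssim_{\alpha}\kappa^{\alpha}\,2^{-\epsilon n}$ for every $0\le\alpha<1/2$.

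I expect the main obstacle to be the almost orthogonality bookkeeping that converts $M\sim 2^{n}\log(n+1)$ rows, each only of size $2^{-n/2}$, into a single $2^{-n/2}$: this forces the use of Cotlar--Stein and, in particular, of the cross-row estimate in both operator orderings $TT^{*}$ and $T^{*}T$, the second being slightly less symmetric because the tile operators $T_{\Tp}$ carry the sharp cutoff $\one_{E(\Tp)}$ and are not continuous. A secondary obstacle is checking that the $F$-localization survives the oscillatory (rather than positive-operator) cross-row estimates with a usable power of $\kappa$, which is where one leans on the argument already used for the boundary and antichain terms.
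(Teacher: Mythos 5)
Your skeleton is the paper's: split $\TF_{n,k,j}'$ into $O(2^{n}\log(n+1))$ rows of normal trees with disjoint top cubes, apply Corollary~\ref{cor:tree} tree by tree to get the row bounds \eqref{eq:row-est}, use the $2^{Cn}$-separation across rows, and glue. Gluing by Cotlar--Stein instead of the paper's direct expansion is a legitimate variant, but your input to it has a flaw: for the ordering $T_{\TR_{m}}^{*}T_{\TR_{m'}}$ you invoke ``the same estimate with the roles of $T$ and $T^{*}$ exchanged'', and no such analogue of Lemma~\ref{lem:row} is obtainable by the same proof --- exactly because of the roughness you yourself flag (the factors $\one_{E(\Tp)}(x)$ and $Q_{x}$ make $T_{\TT}f$ discontinuous, so the H\"older estimates feeding Lemma~\ref{lem:osc-int} are unavailable). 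What saves you is a much stronger and elementary fact, which is the paper's \eqref{eq:rows-disjoint}: tiles in distinct trees of the same forest are incomparable, hence by the nestedness of uncertainty regions (Lemma~\ref{lem:tile}) their sets $E(\Tp)$ are pairwise disjoint, so $T_{\TR_{m}}^{*}T_{\TR_{m'}}=0$ for $m\neq m'$. The same disjointness shows your editorial claim about the direct expansion is mistaken: since $T_{\TR_{m}}^{*}g=T_{\TR_{m}}^{*}(\one_{A_{m}}g)$ with $A_{m}:=\cup_{\Tp\in\TR_{m}}E(\Tp)$ pairwise disjoint, the diagonal of $\sum_{m,m'}\langle T_{\TR_{m}}^{*}g,T_{\TR_{m'}}^{*}g\rangle$ is $\lesssim 2^{-n}\norm{g}_{2}^{2}$ with no $\log(n+1)$ loss; the counting function only enters through the number of off-diagonal pairs, which Lemma~\ref{lem:row} kills. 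So the paper's expansion and your Cotlar--Stein give the same thing once \eqref{eq:rows-disjoint} is in hand.

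The genuine gap is in your route to \eqref{eq:Fef-forest-local}. You propose a localized cross-row bound obtained by pushing $\one_{F}$ through the proof of Lemma~\ref{lem:row}, estimating the integrals over $F\cap J$ trivially by $\kappa\meas{J}$ times a sup and ``taking a geometric average with the bound from Lemma~\ref{lem:osc-int}''. As stated this does not work: the two bounds concern different integrals (with and without the cutoff), so there is nothing to average; $\one_{F}h_{J}$ has no usable modulus of continuity, so Lemma~\ref{lem:osc-int} gives nothing for the cut-off integral; and smallness of $\norm{T_{\TR_{m}}T_{\TR_{m'}}^{*}}_{2\to2}$ does not transfer abstractly to $\norm{T_{\TR_{m}}\one_{F}T_{\TR_{m'}}^{*}}_{2\to2}$ upon inserting the projection $\one_{F}$ in the middle. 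Fortunately none of this is needed: since you take a geometric average with \eqref{eq:Fef-forest-global} at the end anyway, a crude localized bound suffices. By \eqref{eq:rows-disjoint} the outputs of different rows are disjointly supported, so $\norm{T_{\TF_{n,k,j}'}\one_{F}f}_{2}^{2}=\sum_{m}\norm{T_{\TR_{m}}\one_{F}f}_{2}^{2}\lesssim 2^{n}\log(n+1)\cdot\kappa\,2^{-n}\norm{f}_{2}^{2}$, i.e.\ $\norm{T_{\TF_{n,k,j}'}\one_{F}}_{2\to2}\lesssim\kappa^{1/2}(\log(n+1))^{1/2}$, and averaging this with \eqref{eq:Fef-forest-global} yields $\kappa^{\alpha}2^{-\epsilon n}$ for every $0\leq\alpha<1/2$ --- which is precisely the paper's proof.
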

\begin{proof}
We subdivide $\TF_{n,k,j}'$ into rows by the following procedure: for each $m\geq 0$, let inductively $\TR_{n,k,m} = \cup_{l\in L(k,m)}\TN_{n,k,j,l}$ be the union of a maximal set of trees whose spatial cubes are disjoint and maximal among those that have not been selected yet.
This procedure terminates after $O(2^{n} \log(n+1))$ steps, because the tree top cubes have overlap bounded by $O(2^{n} \log(n+1))$.
Applying Corollary~\ref{cor:tree} with the set $F$ and with the set $F$ replaced by $\R^{\ds}$ to each tree, we obtain
\[
\norm{T_{\TN_{n,k,j,l}} \one_{F}}_{2\to 2} \lesssim \kappa^{1/2} 2^{-n/2},
\quad
\norm{T_{\TN_{n,k,j,l}}}_{2\to 2} \lesssim 2^{-n/2}.
\]
Using normality of the trees and disjointness of their top cubes, we obtain
\begin{equation}
\label{eq:row-est}
\norm{T_{\TR_{n,k,m}} \one_{F}}_{2\to 2} \lesssim \kappa^{1/2} 2^{-n/2},
\quad
\norm{T_{\TR_{n,k,m}}}_{2\to 2} \lesssim 2^{-n/2}.
\end{equation}
Using the fact that
\begin{equation}
\label{eq:rows-disjoint}
T_{\TR_{n,k,m}}^{*} T_{\TR_{n,k,m'}} = 0
\text{ for } m \neq m'
\end{equation}
due to disjointness of $E(\Tp)$ for tiles that belong to separated trees, as well as Lemma~\ref{lem:row} and an orthogonality argument, we obtain \eqref{eq:Fef-forest-global}.

Using \eqref{eq:rows-disjoint} and \eqref{eq:row-est} gives
\begin{align*}
\norm{T_{\TF'_{n,k,j}} \one_{F} f}_{2}
&=
\big( \sum_{m \lesssim 2^{n} \log(n+1)} \norm{T_{\TR_{n,k,m}} \one_{F} f}_{2}^{2} \big)^{1/2}\\
&\lesssim
\big( \sum_{m \lesssim 2^{n} \log(n+1)} (\kappa^{1/2} 2^{-n/2} \norm{f}_{2})^{2} \big)^{1/2}\\
&\lesssim
\kappa^{1/2} 2^{-n/2} \norm{f}_{2} (2^{n} \log(n+1))^{1/2}\\
&\lesssim
\kappa^{1/2} (\log(n+1))^{1/2} \norm{f}_{2}.
\end{align*}
Taking a geometric average with \eqref{eq:Fef-forest-global}, we obtain \eqref{eq:Fef-forest-local}.
\end{proof}

\subsection{Orthogonality between stopping generations}
\label{sec:orth-stop-gen}
\begin{lemma}\label{lem:T*-stopping-cut}
Let $\TT \subset \TP_{k}$ be a tree and $k'>k$.
Then
\[
\norm{T_{\TT} \one_{F_{k'}}}_{2\to 2}
\lesssim
e^{-(k'-k)},
\]
where $F_{k'} = \cup_{F\in\calF_{k'}} F$.
\end{lemma}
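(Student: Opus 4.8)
The plan is to reduce the estimate to a density bound for the sets $\calF_{k'}$ against the leaves $\calJ(\TT)$ of the tree, and then to reprise the computation from the proof of Corollary~\ref{cor:tree}. Write $\calJ := \calJ(\TT)$. That proof in fact uses only the leaf mass bound \eqref{eq:leaf-mass} (here $\mdens_{k}(\TT) \lesssim 1$ always, since $\mE(\lambda\Tp') \subseteq I_{\Tp'}$) together with a density bound of the form $\meas{F_{k'}\cap J} \leq \kappa \meas{J}$ for all $J \in \calJ$; it then produces $\norm{T_{\TT}\one_{F_{k'}}}_{2\to 2} \lesssim \mdens_{k}(\TT)^{1/2}\kappa^{1/2} \lesssim \kappa^{1/2}$. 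Thus it suffices to show
\[
\meas{F_{k'}\cap J} \lesssim D^{-10\ds(k'-k-1)}\meas{J} \qquad \text{for every } J \in \calJ(\TT).
\]
Because $D$ is a fixed large constant, the right-hand side is $\lesssim D^{-10\ds(k'-k)}\meas{J} \leq e^{-2(k'-k)}\meas{J}$, and taking square roots gives the claimed bound $e^{-(k'-k)}$ (the implicit constant absorbing a power of $D$).

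To prove the displayed bound, fix $J \in \calJ(\TT)$. By maximality of $J$ there is a tile $\Tp_{0} \in \TT$ with $I_{\Tp_{0}} \subseteq 100D\hat{J}$, and $I_{\Tp_{0}} \in \calC_{k} = \tilde\calC_{k}\setminus\tilde\calC_{k+1}$, so in particular $I_{\Tp_{0}} \notin \tilde\calC_{k+1}$. The crucial structural fact is that every $F \in \calF_{k+1}$ with $F \cap J \neq \emptyset$ satisfies $F \subseteq C_{2}J$ for an absolute dilation factor $C_{2}$ (equivalently $s(F) \leq s(J) + O(1)$). Indeed, if $s(F)$ were much larger than $s(J)$, then $F$---a large grid cube meeting $J$---would contain $100D\hat{J}$ and hence $I_{\Tp_{0}}$, so that $I_{\Tp_{0}} \subset 3F$ with $s(I_{\Tp_{0}}) < s(F)$; since $F \in \tilde\calC_{k+1}$, the stopping-neighbor property \eqref{eq:tildeCk-Whitney} would then force $I_{\Tp_{0}} \in \tilde\calC_{k+1}$, a contradiction. (The subcase in which $\hat{J}$ lies near the boundary of $F$ is handled exactly as in the proof of \eqref{eq:Dp'-support}, using that $\calF_{k+1}$-cubes surround $F$ at comparable scales.) Since distinct cubes of $\calF_{k+1}$ are pairwise disjoint, it follows that $\sum_{F \in \calF_{k+1} : F\cap J \neq \emptyset}\meas{F} \leq \meas{C_{2}J} \lesssim \meas{J}$.

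Now every $F \in \calF_{k'}$ with $F \cap J \neq \emptyset$ lies inside its stopping ancestor $F^{(k+1)} \in \calF_{k+1}$, which also meets $J$ and is therefore contained in $C_{2}J$. Applying the support decay \eqref{eq:Lie-support-decay} a total of $k'-k-1$ times inside each such $F^{(k+1)}$ gives $\sum_{F \in \calF_{k'} : F \subseteq F^{(k+1)}}\meas{F} \leq D^{-10\ds(k'-k-1)}\meas{F^{(k+1)}}$, and summing over the finitely many relevant (disjoint) ancestors $F^{(k+1)}$ yields
\[
\meas{F_{k'}\cap J} \leq D^{-10\ds(k'-k-1)}\sum_{F^{(k+1)}\in\calF_{k+1} : F^{(k+1)}\cap J \neq \emptyset}\meas{F^{(k+1)}} \lesssim D^{-10\ds(k'-k-1)}\meas{J},
\]
which is the required density bound.

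I expect the main difficulty to be the structural fact of the second paragraph: controlling the size of $\calF_{k+1}$-cubes that meet a leaf of $\TT$. This is the point at which the stopping-neighbor property \eqref{eq:tildeCk-Whitney} and the fact that the spatial cubes of $\TT$ avoid $\tilde\calC_{k+1}$ enter, and it is the analogue---for leaves $J \in \calJ(\TT)$ rather than for dilates $5I_{\Tp'}$---of the localization estimate \eqref{eq:Dp'-support} from the proof of Proposition~\ref{prop:sf}. The remaining steps are a routine rerun of the tree estimate.
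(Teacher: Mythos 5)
Your proposal is correct and follows essentially the same route as the paper: bound the scales of $\calF_{k+1}$-cubes meeting a leaf $J\in\calJ(\TT)$ via the Whitney property \eqref{eq:tildeCk-Whitney} and the fact that $I_{\Tp}\in\calC_{k}$ for some $\Tp\in\TT$ with $I_{\Tp}\subseteq 100D\hat{J}$, iterate \eqref{eq:Lie-support-decay} to get $\meas{J\cap F_{k'}}\lesssim e^{-2(k'-k)}\meas{J}$, and conclude through the tree estimate (the paper phrases this as $\norm{P_{\calJ}\one_{F_{k'}}}_{2\to 2}\lesssim e^{-(k'-k)}$ plus Lemma~\ref{lem:tree}, which is the same computation as your rerun of Corollary~\ref{cor:tree} with $\kappa\sim e^{-2(k'-k)}$). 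The only cosmetic difference is your boundary subcase: no surrounding-cubes argument is needed, since for $s(F)\geq s(J)+4$ one has $100D\hat{J}\subset 3F$ (the paper uses $(1+1/D)F$), and \eqref{eq:tildeCk-Whitney} only requires $I_{\Tp}\subset 3F$ with $s(I_{\Tp})<s(F)$.
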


\begin{proof}
Let $\calJ := \calJ(\TT)$ and $J\in\calJ$, so that $100D \hat{J} \supseteq I_{\Tp}$ for some $\Tp\in\TT$.

Let $F'\in \calF_{k+1}$ be such that $J\cap F' \neq\emptyset$.
Suppose that $\scale(F') \geq \scale(J) + 4$.
Then $(1+\frac1D) F' \supset 100D \hat{J} \supseteq I_{\Tp}$ and $\scale(F') > \scale(\Tp)$.
By part \ref{it:stopping-neighbors} of Lemma~\ref{lem:spatial-decomposition}, this implies $I\in\calF_{k+1}$ for some $I\supseteq I_{\Tp}$, contradicting $I_{\Tp}\in \calC_{k}$.

Therefore, we must have $\scale(F') \leq \scale(J) + 3$, and it follows that
\[
\sum_{F'\in\calF_{k+1} : J\cap F' \neq\emptyset} \abs{F'} \lesssim \abs{J}.
\]
Hence,
\begin{multline*}
\abs{J \cap F_{k'}}
\leq
\sum_{F'\in\calF_{k+1} : J\cap F' \neq\emptyset} \abs{F' \cap F_{k'}}\\
\lesssim
\sum_{F'\in\calF_{k+1} : J\cap F' \neq\emptyset} e^{-2(k'-k-1)} \abs{F'}
\lesssim
e^{-2(k'-k)} \abs{J}.
\end{multline*}
This implies $\norm{P_{\calJ} \one_{F_{k'}}}_{2\to 2} \lesssim e^{-(k'-k)}$, and the claim follows from Lemma~\ref{lem:tree}.
\end{proof}

\begin{proposition}
\label{prop:Fef-forest:small-support}
For any measurable subset $F'\subset \R^{\ds}$, we have
\begin{align}
\norm{T_{\TF_{n,k,j}'}^{*} T_{\TF_{n,k',j'}'}}_{2 \to 2} &\lesssim 10^{n} e^{-\abs{k-k'}}, \label{eq:gen-T*T}\\
\norm{T_{\TF_{n,k,j}'} \one_{F'} T_{\TF_{n,k',j'}'}^{*}}_{2 \to 2} &\lesssim 10^{n} e^{-\abs{k-k'}}. \label{eq:gen-TT*}
\end{align}
\end{proposition}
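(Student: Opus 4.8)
The plan is to deduce both estimates from Proposition~\ref{prop:Fef-forest}, from Lemma~\ref{lem:T*-stopping-cut}, and from a one-sided variant of the latter. Passing to adjoints we may assume $k \le k'$. In the diagonal case $k = k'$ both estimates follow at once from $\norm{T_{\TF_{n,k,j}'}}_{2 \to 2} \lesssim 2^{-n/2}$ and $\norm{\one_{F'}}_{2 \to 2} \le 1$, so assume $k < k'$ from now on. Every tile $\Tp \in \TF_{n,k',j'}'$ satisfies $I_{\Tp} \in \calC_{k'} \subseteq \tilde\calC_{k'}$, so $I_{\Tp}$ is contained in some cube of $\calF_{k'}$, hence in $F_{k'} := \cup_{F \in \calF_{k'}} F$; since moreover the trees constituting $\TF_{n,k',j'}'$ are normal, we obtain the operator identities
\[
T_{\TF_{n,k',j'}'} = \one_{F_{k'}} T_{\TF_{n,k',j'}'}
\quad\text{and}\quad
T_{\TF_{n,k',j'}'}^{*} = \one_{F_{k'}} T_{\TF_{n,k',j'}'}^{*}.
\]
Inserting these into the two products, and using $\norm{T_{\TF_{n,k',j'}'}}_{2\to 2} = \norm{T_{\TF_{n,k',j'}'}^{*}}_{2\to 2} \lesssim 1$, $\norm{\one_{F'}}_{2\to 2} \le 1$, and $\norm{T_{\TF_{n,k,j}'}^{*} \one_{F_{k'}}}_{2\to 2} = \norm{\one_{F_{k'}} T_{\TF_{n,k,j}'}}_{2\to 2}$, the proposition is reduced to the two one-sided bounds
\[
\norm{\one_{F_{k'}} T_{\TF_{n,k,j}'}}_{2\to 2} \lesssim 2^{n}\log(n+1)\, e^{-(k'-k)}
\quad\text{and}\quad
\norm{T_{\TF_{n,k,j}'} \one_{F_{k'}}}_{2\to 2} \lesssim 2^{n}\log(n+1)\, e^{-(k'-k)},
\]
which suffice since $2^{n}\log(n+1) \lesssim 10^{n}$.

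For both one-sided bounds I would split $\TF_{n,k,j}' = \cup_{l} \TN_{n,k,j,l}$ and treat the trees one at a time. Since $\CZK_{s}(x,y) = 0$ unless $\abs{x-y} < D^{s}/2$, for a normal tree $\TN$ both $T_{\TN}$ and $T_{\TN}^{*}$ read their input from, and are supported in, the spatial cube $I_{\TN}$. The cubes $\{I_{\TN_{n,k,j,l}}\}_{l}$ have overlap $\lesssim 2^{n}\log(n+1)$ by \eqref{eq:F-forest-counting}, so a routine bounded-overlap argument (in both the domain and the range) gives
\[
\norm{\one_{F_{k'}} T_{\TF_{n,k,j}'}}_{2\to 2}
\lesssim
2^{n}\log(n+1)\, \max_{l}\norm{\one_{F_{k'}} T_{\TN_{n,k,j,l}}}_{2\to 2},
\]
and likewise with $\one_{F_{k'}}$ on the right. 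It therefore remains to prove, for a single generation-$k$ tree $\TN$,
\[
\norm{T_{\TN} \one_{F_{k'}}}_{2\to 2} \lesssim e^{-(k'-k)}
\quad\text{and}\quad
\norm{\one_{F_{k'}} T_{\TN}}_{2\to 2} \lesssim e^{-(k'-k)}.
\]

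The first of these is exactly Lemma~\ref{lem:T*-stopping-cut}. For the second I would rerun that argument on the other side: the tree estimate \eqref{eq:tree:proj} (with $p = 2$) gives $\norm{\one_{F_{k'}} T_{\TN}}_{2\to 2} \le \norm{P_{\calL(\TN)} \one_{F_{k'}}}_{2 \to 2}$, so it suffices to show $\abs{L \cap F_{k'}} \lesssim D^{-10\ds(k'-k-1)} \abs{L}$ for every $L \in \calL(\TN)$. This is where the stopping collection enters: the parent $\hat L$ lies in $\calC_{k}$ (as noted in the proof of Corollary~\ref{cor:tree}), hence $\hat L \notin \tilde\calC_{k+1}$, so by the Whitney property \eqref{eq:tildeCk-Whitney} every $F' \in \calF_{k+1}$ meeting $L$ is contained in $L$; iterating the support-decay estimate \eqref{eq:Lie-support-decay} from generation $k+1$ down to $k'$ then yields the required bound on $\abs{L \cap F_{k'}}$. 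I expect this last geometric estimate to be the only slightly delicate point, but it is essentially the computation already carried out in the proof of Lemma~\ref{lem:T*-stopping-cut}, with the leaf cubes $\calL(\TN)$ in place of the cubes $\calJ(\TN)$, so it requires no new idea.
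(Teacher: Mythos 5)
Your proof is correct and essentially the same as the paper's: both insert $\one_{F_{k'}}$ using normality and $I_{\Tp}\subseteq F_{k'}$ for generation-$k'$ tiles, and reduce to the one-sided single-tree bounds $\norm{T_{\TN}\one_{F_{k'}}}_{2\to2}\lesssim e^{-(k'-k)}$ (Lemma~\ref{lem:T*-stopping-cut}) and $\norm{\one_{F_{k'}}T_{\TN}}_{2\to2}\lesssim e^{-(k'-k)}$ via Lemma~\ref{lem:tree} together with the leaf-cube estimate $\norm{P_{\calL(\TN)}\one_{F_{k'}}}_{2\to2}\lesssim e^{-(k'-k)}$ from \eqref{eq:Lie-support-decay}, which you correctly justify through $\hat{L}\in\calC_{k}$ (grid nestedness alone already gives this; the Whitney property \eqref{eq:tildeCk-Whitney} is not really needed there). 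The only difference is organizational: the paper passes through the rows $\TR_{n,k,m}$ of Proposition~\ref{prop:Fef-forest} and sums over pairs of rows, while you decompose directly into the normal trees and use bounded overlap of their top cubes, either way staying within the $10^{n}$ budget.
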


\begin{proof}
Let $\TR_{n,k,m}$ be the rows defined in the proof of Proposition~\ref{prop:Fef-forest}.
It suffices to show
\begin{align}
\norm{T_{\TR_{n,k,m}}^{*} T_{\TR_{n,k',m'}}}_{2 \to 2} &\lesssim e^{-\abs{k-k'}}, \label{eq:gen-T*T:row}\\
\norm{T_{\TR_{n,k,m}} \one_{F'} T_{\TR_{n,k',m'}}^{*}}_{2 \to 2} &\lesssim e^{-\abs{k-k'}}. \label{eq:gen-TT*:row}
\end{align}

Without loss of generality we may assume $k'\geq k$.
We will use the fact that
\[
T_{\TR_{n,k',m'}}
=
\one_{F_{k'}} T_{\TR_{n,k',m'}}
=
T_{\TR_{n,k',m'}} \one_{F_{k'}}
\]
with $F_{k'} = \cup_{F\in\calF_{k'}}F$ (the last equality uses normality of the trees).

Using \eqref{eq:row-est}, we estimate
\begin{align*}
LHS\eqref{eq:gen-T*T:row}
&=
\norm{ T_{\TR_{n,k,m}}^{*} \one_{F_{k'}} T_{\TR_{n,k',m'}} }_{2\to 2}\\
&\leq
\norm{ T_{\TR_{n,k,m}}^{*} \one_{F_{k'}} }_{2\to 2} \norm{ T_{\TR_{n,k',m'}} }_{2\to 2}\\
&\lesssim
\norm{ \one_{F_{k'}} T_{\TR_{n,k,m}} }_{2\to 2}.
\end{align*}
As a consequence of \eqref{eq:Lie-support-decay}, we have
\[
\norm{P_{\calL(\TN_{n,k,j,l})} \one_{F_{k'}}}_{2\to 2} \lesssim e^{-\abs{k-k'}},
\]
and \eqref{eq:gen-T*T:row} follows from Lemma~\ref{lem:tree}.
Similarly,
\begin{align*}
LHS\eqref{eq:gen-TT*:row}
&=
\norm{ T_{\TR_{n,k,m}} \one_{F'} \one_{F_{k'}} T_{\TR_{n,k',m'}}^{*} }_{2\to 2}\\
&\leq
\norm{ T_{\TR_{n,k,m}} \one_{F'\cap F_{k'}} }_{2\to 2} \norm{ T_{\TR_{n,k',m'}}^{*} }_{2\to 2}\\
&\lesssim
\norm{ T_{\TR_{n,k,m}} \one_{F_{k'}} }_{2\to 2}\\
&\lesssim
e^{-\abs{k-k'}}
\end{align*}
by Lemma~\ref{lem:T*-stopping-cut}.
\end{proof}

\section{Proof of Theorem~\ref{thm:loc}}
\label{sec:proof-loc}
As previously mentioned in Section~\ref{sec:discretization}, in view of Lemma~\ref{lem:M-loc}, we may replace the operator \eqref{eq:T} by \eqref{eq:T-smooth-trunc}, which in turn can be replaced by $T_{\TP}$.
\begin{proof}[Proof of \eqref{eq:loc:full}]
Using the decomposition \eqref{eq:tree-dec}, we split
\begin{multline*}
\norm{T_{\TP}}_{2\to 2}
\leq
\sum_{n=1}^{\infty} \sum_{j=1}^{C n^{2}} \Big( \norm[\big]{ \sum_{k\in\N} T_{\TF_{n,k,j}'}}_{2\to 2}
+ \norm[\big]{ \sum_{k\in\N} T_{\TA_{n,k,j}}}_{2\to 2}\\
+ \norm[\big]{ \sum_{k\in\N} \sum_{l} T_{\bd(\TT_{n,k,j,l})}}_{2\to 2} \Big).
\end{multline*}
The contribution of the last two summands is estimated by Proposition~\ref{prop:sf}.
In the first summand, we split the summation over $k$ in congruence classes modulo $C n$ and use Propositions \ref{prop:Fef-forest}, \ref{prop:Fef-forest:small-support}, and the Cotlar--Stein Lemma (see e.g.~\cite[Section VII.2]{MR1232192}).
\end{proof}

In the remaining part of the proof, we may assume $0 < \nu,\kappa < 1$.
Indeed, the cases $\nu=0$ and $\kappa=0$ are trivial, and in the cases $\nu \geq 1$ or $\kappa \geq 1$ the respective estimates \eqref{eq:loc:G} and \eqref{eq:loc:F} follow from \eqref{eq:loc:full}.

\begin{proof}[Proof of \eqref{eq:loc:G}]
Let $\TP_{\tilde G} := \Set{ \Tp\in\TP \given I_{\Tp}^{*} \subseteq \tilde{G}}$, then $T_{\Tp}\one_{\R^{\ds}\setminus \tilde{G}}=0$ if $\Tp\in\TP_{\tilde G}$.
Hence
\[
\norm{ \one_{G} T_{\TP} \one_{\R^{\ds} \setminus \tilde{G}}}_{2 \to 2}
=
\norm{ \one_{G} T_{\TP \setminus \TP_{\tilde G}} \one_{\R^{\ds} \setminus \tilde{G}}}_{2 \to 2}
\leq
\norm{ \one_{G} T_{\TP \setminus \TP_{\tilde G}}}_{2 \to 2}.
\]
In order to estimate the latter quantity, we run the proof of \eqref{eq:loc:full} with $\TP$ replaced by $\TP\setminus\TP_{\tilde{G}}$ and (formally) $\smax(x) = -\infty$ for $x\in\R^{\ds}\setminus G$.

The main change is that all tiles now have density $2^{n} \lesssim \nu$.
This yields the required improvement in the estimate for the main term.
In the error terms, we use Proposition~\ref{prop:sf-loc} with $F=\R^{\ds}$.
The hypothesis \eqref{eq:sf-loc:assume} is satisfied, because we have removed all tiles whose spatial cubes are contained in $\tilde{G}$.
\end{proof}

\begin{proof}[Proof of \eqref{eq:loc:F}]
Let $\TP_{\tilde F} := \Set{ \Tp\in\TP \given I_{\Tp} \subseteq \tilde{F}}$, then $\one_{\R^{\ds}\setminus \tilde{F}} T_{\Tp}=0$ if $\Tp\in\TP_{\tilde F}$.
Hence
\[
\norm{ \one_{\R^{\ds} \setminus \tilde{F}} T_{\TP} \one_{F}}_{2 \to 2}
=
\norm{ \one_{\R^{\ds} \setminus \tilde{F}} T_{\TP \setminus \TP_{\tilde F}} \one_{F}}_{2 \to 2}
\leq
\norm{ T_{\TP \setminus \TP_{\tilde F}} \one_{F}}_{2 \to 2}.
\]
In order to estimate the latter term, we again run the proof of \eqref{eq:loc:full} with $\TP$ replaced by $\TP \setminus \TP_{\tilde F}$.
In particular, we split
\begin{multline*}
\norm{T_{\TP} \one_{F}}_{2\to 2}
\leq
\sum_{n=1}^{\infty} \sum_{j=1}^{C n^{2}} \Big( \norm[\big]{ \sum_{k\in\N} T_{\TF_{n,k,j}'} \one_{F}}_{2\to 2}
+ \norm[\big]{ \sum_{k\in\N} T_{\TA_{n,k,j}} \one_{F}}_{2\to 2}\\
+ \norm[\big]{ \sum_{k\in\N} \sum_{l} T_{\bd(\TT_{n,k,j,l})} \one_{F}}_{2\to 2} \Big).
\end{multline*}
The contribution of the last two terms is taken care of by Proposition~\ref{prop:sf-loc} with $G=\R^{\ds}$.
In the estimate for the main term, we use \eqref{eq:Fef-forest-local} in place of \eqref{eq:Fef-forest-global} and split the summation over $k$ in congruence classes modulo $\lceil C n (\abs{\log \kappa} + 1) \rceil$.
\end{proof}

\appendix
\section{A van der Corput type oscillatory integral estimate}
\label{sec:vdC}
We will use the following van der Corput type estimate for oscillatory integrals in $\R^{\ds}$ that refines \cite[Proposition 2.1]{MR1879821}.

\begin{lemma}
\label{lem:osc-int}
Let $\psi : \R^{\ds}\to\C$ be a measurable function with $\supp\psi\subset J$ for a cube $J$.
Then, for every $Q\in\calQ_{d}$, we have
\[
\abs[\big]{\int_{\R^{\ds}} e(Q(x)) \psi(x) \dif x}
\lesssim
\sup_{\abs{y}<\Delta^{-1/d} \ell(J)} \int_{\R^{\ds}} \abs{\psi(x)-\psi(x-y)} \dif x,
\quad
\Delta = \norm{Q + \R}_{J}+1.
\]
\end{lemma}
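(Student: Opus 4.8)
The plan is to follow the strategy of \cite[Proposition~2.1]{MR1879821}, the new point being to express the amplitude error entirely through the $L^{1}$ modulus of continuity of $\psi$ rather than through $\norm{\psi}_{\infty}+\norm{\nabla\psi}_{1}$. First I would normalize: by an affine change of variables sending $J$ to the unit cube we may assume $\ell(J)=1$; both sides transform by the same power of $\ell(J)$, and $\norm{Q+\R}_{J}$ is unchanged. Write $\Delta=\norm{Q+\R}_{J}+1$. The case of bounded $\Delta$ is soft: if $\Delta\le C_{0}$ for a constant $C_{0}=C_{0}(d,\ds)$, then $\Delta^{-1/d}\ell(J)\gtrsim 1$, and it is enough to bound the trivial quantity $\int\abs{\psi}$. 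Since $\supp\psi$ has diameter $\lesssim 1$, choosing $z$ with $\abs{z}\lesssim 1$ so that $\supp\psi$ and $\supp\psi+z$ are disjoint gives $2\int\abs{\psi}=\int\abs{\psi(x)-\psi(x-z)}\dif x$, and chaining this bound along $\lesssim 1$ translations each of length $<\Delta^{-1/d}\ell(J)$ yields $\int\abs{\psi}\lesssim\sup_{\abs{y}<\Delta^{-1/d}\ell(J)}\int\abs{\psi(x)-\psi(x-y)}\dif x$.

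So assume $\Delta\ge C_{0}$ large. The key structural fact I would use is that at every point $x\in J$ some derivative $\partial^{\alpha}Q(x)$ with $1\le\abs{\alpha}\le d$ satisfies $\abs{\partial^{\alpha}Q(x)}\gtrsim\Delta$: this follows from the equivalence of norms on the finite-dimensional space $\calQ_{d}/\R$ applied on a ball $B(x,c)$ with $c$ a fixed constant, together with Lemma~\ref{lem:normQ}, which gives $\norm{Q+\R}_{B(x,c)}\gtrsim_{c}\Delta-1$ for every $x\in J$. From here the aim is to reduce to a one-dimensional van der Corput estimate: choose a direction so that the restrictions of $Q$ to the corresponding lines have large one-dimensional oscillation on a controlled portion of the lines, apply the one-variable estimate on each such line, and integrate in the transverse variables. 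On a single line, after the standard subdivision into $O_{d}(1)$ intervals on which a fixed-order derivative of $Q$ is monotone and of size $\gtrsim\Delta$ up to a power, a change of variables $u=Q(\cdot)$ transports the problem to the elementary cancellation $\int_{m}^{m+1}e(u)\dif u=0$ over unit intervals; subtracting averages over unit intervals produces exactly the $L^{1}$ modulus of the pushed-forward amplitude, which one then transfers back to $\psi$.

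The hard part is this last step: one must control both the change-of-variables weight $1/Q'$ (whose own modulus is harmless because $Q$ is a bounded-degree polynomial) and, more seriously, the two boundary intervals — the $Q$-preimages of the end unit intervals, which have length only $\lesssim\Delta^{-1/d}\ell(J)$ — purely in terms of $\sup_{\abs{y}<\Delta^{-1/d}\ell(J)}\int\abs{\psi(x)-\psi(x-y)}\dif x$, with no appearance of $\norm{\psi}_{\infty}$. I expect this to rest on the geometric observation that for $\psi$ supported in $J$ the $L^{1}$ modulus at scale $\Delta^{-1/d}\ell(J)$ already dominates $\int_{I}\abs{\psi}$ for every sub-cube $I\subset J$ of that side length (so that the end intervals of the change of variables cost nothing extra), and on a sufficiently careful choice of slicing direction at the previous step so that the one-dimensional estimate is available for enough of the transverse slices; getting the exponent in the final bound to be the sharp one-dimensional exponent $\Delta^{-1/d}$, rather than something degraded by $\ds$, is what forces the reduction to dimension one instead of an iterated $TT^{*}$.
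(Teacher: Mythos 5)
Your write-up is an outline, not a proof: the step you yourself flag as ``the hard part'' --- running the Stein--Wainger reduction to one dimension while expressing \emph{every} error (the change-of-variables weight $1/Q'$, the two boundary preimage intervals, and the choice of slicing direction) solely through $\sup_{\abs{y}<\Delta^{-1/d}\ell(J)}\int\abs{\psi(x)-\psi(x-y)}\dif x$, with the sharp exponent $\Delta^{-1/d}$ --- is never carried out; it is only conjectured to work (``I expect this to rest on\dots''). Since that is precisely where the whole difficulty of replacing $\norm{\psi}_\infty+\norm{\nabla\psi}_1$ by the $L^1$ modulus lives, the argument as written has a genuine gap. (For what it is worth, the auxiliary geometric fact you appeal to is true: testing $\int\abs{\psi(x)-\psi(x-te_1)}\dif x$ against a half-space $\{x_1\geq a\}$ and using the pointwise reverse triangle inequality shows the modulus at scale $\delta$ dominates $\int_I\abs{\psi}$ for any $\delta$-cube $I$; but this alone does not control the weight or the transverse-slice bookkeeping, so it does not close the gap. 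Your treatment of the case $\Delta\lesssim 1$ by chaining translations, and the pointwise lower bound $\max_{1\leq\abs{\alpha}\leq d}\abs{\partial^{\alpha}Q(x)}\gtrsim\Delta$ on $J$, are both fine.)

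The paper avoids all of this with a single mollification: set $\tilde\psi=\phi*\psi$ with $\phi$ a bump at scale $\Delta^{-1/d}\ell(J)$. Then $\norm{\psi-\tilde\psi}_{1}$ is bounded by the modulus $\beta$ (so the replacement costs nothing), and $\norm{\nabla\tilde\psi}_{1}\lesssim \Delta^{1/d}\ell(J)^{-1}\beta$, after which the \emph{existing} proof of \cite[Proposition 2.1]{MR1879821} applied to $\tilde\psi$ gives decay $\Delta^{-1/d}$ against $\norm{\nabla\tilde\psi}_{1}$ --- the one-dimensional van der Corput corollary used there needs only $\int\abs{\partial_i\tilde\psi}$ and no $L^{\infty}$ or endpoint term, because $\tilde\psi$ is compactly supported --- and the two factors of $\Delta^{\pm 1/d}$ cancel. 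In other words, smoothing converts the $L^1$ modulus into a derivative bound, so the multidimensional slicing you were trying to redo from scratch can simply be cited; if you want to salvage your route you must actually supply the slicing direction selection and the boundary/weight estimates in terms of $\beta$, which is considerably more work than the two-line mollification.
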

\begin{remark}
The supremum over $\abs{y}<\Delta^{-1/d} \ell(J)$ above can be replaced by an average.
\end{remark}
\begin{proof}
By scaling and translation, we may assume $\ell(J)\sim 1$ and $J\subset B(0,1/2)$.
Let $\beta$ denote the right-hand side of the conclusion.
If $\Delta \lesssim 1$, then $\norm{\psi}_{1} \lesssim \beta$, so the result is only non-trivial if $\Delta \gg 1$.
In this case, we replace $\psi$ on the left-hand side by $\tilde\psi:=\phi*\psi$, where $\phi = \Delta^{\ds/d} \phi_{0}(\Delta^{1/d} \cdot)$ and $\phi_{0}$ is a smooth positive bump function with integral $1$ supported on the unit ball.
The error term is controlled by
\begin{align*}
\int \abs{\psi-\tilde\psi}(x) \dif x
&=
\int \abs[\big]{\int (\psi(x)-\psi(x-y))\phi(y) \dif y} \dif x\\
&\leq
\int \phi(y) \int \abs{\psi(x)-\psi(x-y)} \dif x \dif y\\
&\lesssim
\beta.
\end{align*}
Moreover, $\supp \tilde\psi \subseteq B(0,1)$ and
\begin{align*}
\int \abs{\partial_{i} \tilde\psi(x)} \dif x
&=
\int \abs{\int \psi(x-y) \partial_{i} \phi(y) \dif y} \dif x\\
&=
\int \abs{\int (\psi(x)-\psi(x-y)) \partial_{i} \phi(y) \dif y} \dif x\\
&\leq
\int \int \abs{\psi(x)-\psi(x-y)} \abs{\partial_{i} \phi(y)} \dif y \dif x\\
&\lesssim
\Delta^{\ds/d+1/d}\int \int_{B(0,\Delta^{-1/d})} \abs{\psi(x)-\psi(x-y)} \dif y \dif x\\
&\lesssim
\Delta^{1/d} \beta
\end{align*}
for every $i=1,\dotsc,\ds$.
The result now follows from the proof of \cite[Proposition 2.1]{MR1879821} applied to $\tilde\psi$.
Notice that the one-dimensional van der Corput estimate (Corollary on p.~334 of \cite{MR1232192}) used in that proof only requires an estimate on the integral of $\nabla \tilde\psi$.
\end{proof}

\section{The extrapolation argument}
\label{sec:extrapolation}
Theorem~\ref{thm:main} is deduced from Theorem~\ref{thm:loc} using Bateman's extrapolation argument that first appeared in \cite{MR3148061} (see also \cite[Theorem 1.1]{MR3352435} and \cite[Theorem 2.27]{MR3841536} for an abstract formulation of this argument).
In order to keep our exposition self-contained, we present this argument in the case needed here.
\begin{lemma}
\label{lem:extrapolation}
Let $1<p,q<\infty$, $(X,\mu)$ be a $\sigma$-finite measure space, and $g : X \to \C$ be a measurable function.
Suppose that there exists $A < \infty$ such that, for every measurable subset $G \subset X$ with $0 < \mu(G) < \infty$, there exists a measurable subset $\tilde{G} \subset X$ with $\mu(\tilde{G}) \leq \mu(G)/2$ such that $\norm{g \one_{G \setminus \tilde{G}}}_{L^{q,\infty}(X)} \leq A \mu(G)^{1/q-1/p}$.
Then $\norm{g}_{L^{p,\infty}(X)} \lesssim A$.
\end{lemma}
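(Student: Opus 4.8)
The plan is to establish the weak-type bound $\mu(\{\abs{g}>\lambda\}) \lesssim (A/\lambda)^{p}$ for every $\lambda>0$ by a one-step self-improvement: apply the hypothesis with $G$ equal to the super-level set $\{\abs{g}>\lambda\}$ itself (suitably truncated to have finite measure), and exploit that on the surviving part $G\setminus\tilde{G}$ the function still satisfies $\abs{g}>\lambda$, so the assumed $L^{q,\infty}$ bound turns into a bound on $\mu(G\setminus\tilde{G})$; since $\tilde{G}$ removes at most half of the measure of $G$, this forces $\mu(G)$ to be small.

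First I would dispose of the possibility that the level set has infinite measure, which matters because the hypothesis is only available for sets of finite measure. Using $\sigma$-finiteness, write $X=\bigcup_{m} X_{m}$ with $X_{m}$ increasing and $\mu(X_{m})<\infty$, and set $G_{m}:=\{\abs{g}>\lambda\}\cap X_{m}$. If $\mu(G_{m})=0$ there is nothing to prove, so assume $0<\mu(G_{m})<\infty$ and apply the hypothesis to $G=G_{m}$, obtaining $\tilde{G}_{m}$ with $\mu(\tilde{G}_{m})\le\mu(G_{m})/2$ and $\norm{g\one_{G_{m}\setminus\tilde{G}_{m}}}_{L^{q,\infty}(X)}\le A\mu(G_{m})^{1/q-1/p}$.

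The key computation is then short. Since $G_{m}\setminus\tilde{G}_{m}\subseteq G_{m}\subseteq\{\abs{g}>\lambda\}$, the super-level set $\{\abs{g\one_{G_{m}\setminus\tilde{G}_{m}}}>\lambda\}$ equals $G_{m}\setminus\tilde{G}_{m}$ exactly, so the definition of the Lorentz quasinorm gives $\lambda\,\mu(G_{m}\setminus\tilde{G}_{m})^{1/q}\le\norm{g\one_{G_{m}\setminus\tilde{G}_{m}}}_{L^{q,\infty}(X)}\le A\mu(G_{m})^{1/q-1/p}$, that is, $\mu(G_{m}\setminus\tilde{G}_{m})\le(A/\lambda)^{q}\mu(G_{m})^{1-q/p}$. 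Combining this with $\mu(G_{m})\le\mu(G_{m}\setminus\tilde{G}_{m})+\mu(\tilde{G}_{m})\le\mu(G_{m}\setminus\tilde{G}_{m})+\tfrac12\mu(G_{m})$ yields $\tfrac12\mu(G_{m})\le(A/\lambda)^{q}\mu(G_{m})^{1-q/p}$; dividing by the finite positive quantity $\mu(G_{m})^{1-q/p}$ gives $\mu(G_{m})^{q/p}\le 2(A/\lambda)^{q}$, hence $\mu(G_{m})\le 2^{p/q}(A/\lambda)^{p}$. Letting $m\to\infty$ shows $\mu(\{\abs{g}>\lambda\})\le 2^{p/q}(A/\lambda)^{p}$ (in particular the level set has finite measure a posteriori), and taking the supremum over $\lambda>0$ in $\norm{g}_{L^{p,\infty}(X)}=\sup_{\lambda>0}\lambda\,\mu(\{\abs{g}>\lambda\})^{1/p}$ gives $\norm{g}_{L^{p,\infty}(X)}\le 2^{1/q}A\lesssim A$.

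I do not anticipate a genuine obstacle: the whole argument is the elementary ``remove half, lose a controlled amount'' bootstrap, and the only point that needs attention is the truncation to the sets $X_{m}$, which is what legitimizes both invoking the hypothesis (finite measure) and dividing by $\mu(G_{m})^{1-q/p}$ (finite and nonzero). Note also that the sign of $1/q-1/p$ plays no role, so no relation between $p$ and $q$ is needed, which is convenient since in the application $q=2$ while $p$ ranges over all of $(1,\infty)$.
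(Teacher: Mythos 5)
Your proof is correct, and it takes a genuinely different route from the paper. The paper iterates the hypothesis indefinitely, setting $G_{n+1}:=\widetilde{G_{n}}$ starting from an \emph{arbitrary} finite-measure set $G_{0}$, sums the resulting geometric series using the Lorentz--H\"older pairing $\norm{g\one_{G_{n}\setminus G_{n+1}}}_{L^{q,\infty}}\norm{\one_{G_{n}}}_{L^{q',1}}$ to get $\int_{G_{0}}\abs{g}\,\dif\mu\lesssim A\,\mu(G_{0})^{1/p'}$, and then concludes by duality between $L^{p,\infty}$ and $L^{p,1}$. You instead apply the hypothesis \emph{once}, to (a $\sigma$-finite truncation of) the level set $\Set{\abs{g}>\lambda}$ itself, and exploit that on the surviving set $G_{m}\setminus\tilde{G}_{m}$ one still has $\abs{g}>\lambda$, so the assumed weak-$L^{q}$ bound collapses to a measure bound; together with $\mu(G_{m}\setminus\tilde{G}_{m})\geq\tfrac12\mu(G_{m})$ this bootstraps to $\mu(G_{m})\leq 2^{p/q}(A/\lambda)^{p}$, and the truncation legitimizes both invoking the hypothesis and dividing by $\mu(G_{m})^{1-q/p}$. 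Your argument is more elementary (no Lorentz duality, no infinite iteration, explicit constant $2^{1/q}$) and, like the paper's, needs no relation between $p$ and $q$; what the paper's version buys is the intermediate restricted-type estimate $\int_{G_{0}}\abs{g}\lesssim A\,\mu(G_{0})^{1/p'}$ for \emph{every} finite-measure $G_{0}$, which is the form in which this extrapolation is usually packaged (following Bateman) and which adapts more readily to abstract formulations where one pairs against test sets rather than level sets of $g$.
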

\begin{proof}
Let $G_{0} \subset X$ with $\mu(G_{0}) < \infty$ be given.
For $n\in \Set{1,2,\dotsc}$, define inductively $G_{n+1} := \widetilde{G_{n}}$, so that $\mu(G_{n}) \leq 2^{-n} \mu(G_{0})$.
Then
\begin{align*}
\int_{G_{0}} \abs{g} \dif\mu
&=
\sum_{n=0}^{\infty} \int_{X} \abs{g \one_{G_{n}\setminus G_{n+1}}} \dif\mu\\
&\leq
\sum_{n=0}^{\infty} \norm{g \one_{G_{n}\setminus G_{n+1}}}_{L^{q,\infty}} \norm{\one_{G_{n}}}_{L^{q',1}}\\
&\lesssim
A \sum_{n=0}^{\infty} \mu(G_{n})^{1/q-1/p} \mu(G_{n})^{1/q'}\\
&\leq
A \sum_{n=0}^{\infty} (2^{-n} \mu(G_{0}))^{1/p'}\\
&\lesssim
A \mu(G_{0})^{1/p'}.
\end{align*}
By duality between $L^{p,\infty}$ and $L^{p',1}$, this implies the claim.
\end{proof}

\begin{proof}[Proof of Theorem~\ref{thm:main} assuming Theorem~\ref{thm:loc}]
By standard real interpolation theory \cite{MR0482275}, it suffices to show that $T$ is a bounded operator from $L^{p,1}(\R^{\ds})$ to $L^{p,\infty}(\R^{\ds})$ for every $1<p<\infty$.
To see this, let $F \subset \R^{\ds}$ be a measurable subset with $0 < \abs{F} < \infty$, $f : \R^{\ds} \to \C$ a measurable function with $\abs{f} \leq \one_{F}$, and $g := Tf$.
If $G \subset \R^{\ds}$ is a measurable subset with $0 < \abs{G} < \infty$, then, for a sufficiently large absolute constant $C$, the set $\tilde{G} := \Set{ M\one_{F} > C \abs{F} \abs{G}^{-1}}$ satisfies $\abs{\tilde{G}} \leq \abs{G}/2$.
On the other hand, by \eqref{eq:loc:F}, for every $0\leq \alpha < 1/2$, we have
\[
\norm{g \one_{G \setminus \tilde{G}}}_{L^{2,\infty}}
\leq
\norm{g \one_{\R^{\ds} \setminus \tilde{G}}}_{2}
\lesssim_{\alpha}
(\abs{F} \abs{G}^{-1})^{\alpha} \norm{f}_{2}
\leq
\abs{F}^{\alpha+1/2} \abs{G}^{-\alpha}.
\]
Using this with $\alpha = 1/p-1/q$ for $1 < p \leq 2 = q$, we see that the hypothesis of Lemma~\ref{lem:extrapolation} holds for the function $g$ with $A \lesssim_{p} \abs{F}^{1/p}$.
Hence, by Lemma~\ref{lem:extrapolation}, we obtain $\norm{g}_{L^{p,\infty}} \lesssim \abs{F}^{1/p}$.
This shows that $T$ is a bounded operator from $L^{p,1}$ to $L^{p,\infty}$.

In the case $2<p<\infty$, we can run the above argument for the adjoint operator $T^{*}$ in place of $T$, using the estimate \eqref{eq:loc:G} in place of \eqref{eq:loc:F}.
\end{proof}

Finally, in Section~\ref{sec:proof-loc}, we have used a localized estimate for the Hardy--Littlewood maximal operator.
We include the short proof.
\begin{lemma}
\label{lem:M-loc}
Let $0\leq \alpha < 1/2$ and $0 < \nu \leq 1$.
Let $G\subset\R^{\ds}$ be a measurable subset and $\tilde G := \Set{ M \one_{G} > \nu }$.
Then
\[
\norm{\one_{G} M \one_{\R^{\ds} \setminus \tilde{G}}}_{2\to 2} \lesssim_{\alpha} \nu^{\alpha},
\quad
\norm{\one_{\R^{\ds} \setminus \tilde{G}} M \one_{G}}_{2\to 2} \lesssim_{\alpha} \nu^{\alpha}.
\]
\end{lemma}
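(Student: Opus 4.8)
The plan is to obtain both estimates by interpolating a trivial $L^{\infty}$ bound against a weak type $(1,1)$ bound, arranging matters so that in each case exactly one of the two endpoint constants carries a factor of $\nu$; Marcinkiewicz interpolation then produces a factor $\nu^{1/2}$ at $L^{2}$, and since $\nu \leq 1$ this dominates $\nu^{\alpha}$ for every $\alpha \leq 1/2$ (so one in fact obtains the slightly stronger statement with $\alpha = 1/2$ and an absolute constant).  The only geometric input is the observation that if a cube $Q$ contains a point $y \notin \tilde{G}$, then $\abs{Q \cap G} \leq \abs{Q}\, M\one_{G}(y) \leq \nu \abs{Q}$.

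For the first estimate I would begin with the trivial pointwise bound $\norm{\one_{G} M(\one_{\R^{\ds}\setminus\tilde{G}} h)}_{\infty} \leq \norm{h}_{\infty}$.  For the weak type bound, set $g := \one_{\R^{\ds}\setminus\tilde{G}}\abs{h}$ and run the usual covering argument from the proof of the weak $(1,1)$ inequality for $M$: given $\lambda > 0$, cover $\Set{ Mg > \lambda }$ by dilated cubes $5Q_{i}$ with the $Q_{i}$ pairwise disjoint and $\abs{Q_{i}} < \lambda^{-1}\int_{Q_{i}} g$.  Each $Q_{i}$ meets $\R^{\ds}\setminus\tilde{G}$ (otherwise $g$ vanishes on $Q_{i}$), so the geometric observation applied to $5Q_{i}$ gives $\abs{5Q_{i} \cap G} \lesssim \nu\abs{Q_{i}}$, and summing over $i$ yields
\[
\abs{\Set{ x \in G \given Mg(x) > \lambda }}
\leq
\sum_{i} \abs{5Q_{i} \cap G}
\lesssim
\nu \sum_{i} \abs{Q_{i}}
\lesssim
\frac{\nu}{\lambda}\int g
\leq
\frac{\nu}{\lambda}\norm{h}_{1}.
\]
Since $\abs{\one_{G} M(\one_{\R^{\ds}\setminus\tilde{G}}h)} \leq \one_{G} Mg$ pointwise, this is a weak $(1,1)$ bound with constant $\lesssim \nu$, and Marcinkiewicz interpolation against the $L^{\infty}$ bound gives $\norm{\one_{G} M \one_{\R^{\ds}\setminus\tilde{G}}}_{2\to 2} \lesssim \nu^{1/2}$.

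For the second estimate I would interpolate the weak $(1,1)$ inequality for $M$ itself (which gives a constant $\lesssim 1$ for $\one_{\R^{\ds}\setminus\tilde{G}} M \one_{G}$, since the extra characteristic functions only shrink level sets) against the improved $L^{\infty}$ bound coming from the same observation: for $x \notin \tilde{G}$ and any cube $Q \ni x$ one has $\abs{Q}^{-1}\int_{Q} \one_{G}\abs{f} \leq \norm{f}_{\infty}\, \abs{Q}^{-1}\abs{Q\cap G} \leq \nu\norm{f}_{\infty}$, hence $\norm{\one_{\R^{\ds}\setminus\tilde{G}} M(\one_{G} f)}_{\infty} \leq \nu\norm{f}_{\infty}$; Marcinkiewicz interpolation again yields $\lesssim \nu^{1/2} \leq \nu^{\alpha}$.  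The only point I expect to require some care is the precise form of the covering lemma used in the weak type bound for the first estimate (the dilation constant, and the fact that each selected cube and hence its dilate contains a point of $\R^{\ds}\setminus\tilde{G}$); the rest is routine.  Alternatively, the second estimate could be deduced from the pointwise domination $\one_{\R^{\ds}\setminus\tilde{G}} M(\one_{G} f) \leq \nu^{1/q'} M_{q} f$, valid by H\"older's inequality on each cube, together with the $L^{2}$ boundedness of $M_{q}$ for $1 < q < 2$.
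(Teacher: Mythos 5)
Your proposal is correct. It is close in spirit to the paper's proof but differs in both ingredients. For the first bound the paper simply quotes the Fefferman--Stein weighted maximal inequality $\norm{Mg}_{L^{1,\infty}(w)} \lesssim \norm{g}_{L^{1}(Mw)}$ with $w=\one_{G}$, observes that $M\one_{G}\leq\nu$ on the support of $\one_{\R^{\ds}\setminus\tilde{G}}f$, and interpolates with the trivial $L^{\infty}$ bound exactly as you do; your covering argument is in effect a self-contained proof of that weighted weak-type bound in the special case $w=\one_{G}$, and the point you flag is indeed harmless: each selected $Q_{i}$ meets $\R^{\ds}\setminus\tilde{G}$ in positive measure because $\int_{Q_{i}}g>0$, and any such point lies in $5Q_{i}$, so $\abs{5Q_{i}\cap G}\leq\nu\abs{5Q_{i}}\lesssim\nu\abs{Q_{i}}$. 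For the second bound the paper's argument is your ``alternative'' route: the pointwise H\"older domination $M(\one_{G}f)\leq (M\one_{G})^{\alpha}M_{q'}f$ with $q=1/\alpha$, together with $L^{2}$ boundedness of $M_{q'}$ for $q'<2$; this is why the paper's constant depends on $\alpha$ and degenerates as $\alpha\to 1/2$. Your primary route for that bound --- the weak $(1,1)$ inequality for $M$ with constant $O(1)$ interpolated against the improved endpoint $\norm{\one_{\R^{\ds}\setminus\tilde{G}}M(\one_{G}f)}_{\infty}\leq\nu\norm{f}_{\infty}$ --- is equally short and actually yields the slightly stronger conclusion $\nu^{1/2}$ with an absolute constant, matching what the first estimate already gives; since $\nu\leq 1$ this dominates $\nu^{\alpha}$ for all $\alpha<1/2$, which is all the lemma requires.
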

\begin{proof}
By the Fefferman--Stein maximal inequality \cite{MR0284802}, we have
\[
\norm{\one_{G} M \one_{\R^{\ds} \setminus \tilde{G}} f}_{1,\infty}
=
\norm{M \one_{\R^{\ds} \setminus \tilde{G}} f}_{L^{1,\infty}(\one_{G})}
\lesssim
\norm{\one_{\R^{\ds} \setminus \tilde{G}} f}_{L^{1}(M \one_{G})}
\leq
\nu \norm{f}_{1}.
\]
Interpolating with the trivial $L^{\infty}$ estimate, we obtain the first claim.

Let now $q=1/\alpha$.
Then, by H\"older's inequality,
\[
M \one_{G} f
\leq
(M_{q} \one_{G}) (M_{q'} f)
=
(M \one_{G})^{\alpha} (M_{q'} f).
\]
Hence,
\[
\norm{\one_{\R^{\ds} \setminus \tilde{G}} M \one_{\tilde{G}} f}_{2}
\leq
\norm{\one_{\R^{\ds} \setminus \tilde{G}} (M\one_{G})^{\alpha} M_{q'} f}_{2}
\leq
\nu^{\alpha} \norm{M_{q'} f}_{2}
\lesssim
\nu^{\alpha} \norm{f}_{2},
\]
where we have used the fact that $M_{q'}$ is bounded on $L^{2}$ provided that $q'<2$.
\end{proof}

\printbibliography

\end{document}